\journal{a journal}
\newtheorem{thm}{Theorem}[section]
\newtheorem{lem}[thm]{Lemma}
\newtheorem{cor}[thm]{Corollary}
\newtheorem{prop}[thm]{Proposition}
\theoremstyle{definition}
\newtheorem{defn}{Definition}
\newtheorem{exam}{Example}
\newtheorem*{remk}{Remark}
\newcommand{\R}{\mathbb{R}}
\newcommand{\norm}[1]{\left\Vert#1\right\Vert}
\newcommand{\abs}[1]{\left\vert#1\right\vert}
\newcommand{\set}[1]{\left\{#1\right\}}
\newcommand{\paren}[1]{\left(#1\right)}
\newcommand{\brac}[1]{\left[#1\right]}
\DeclareMathOperator{\spanof}{span}
\DeclareMathOperator{\setdiff}{\triangle}
\newcommand{\commentout}[1]{}
\newcommand{\draftnote}[1]{}
\DeclareMathOperator{\graph}{graph}
\DeclareMathOperator{\Inv}{Inv}
\begin{document}

%\title{%Mutual complete dependence copulas\\ and the $*$-product
%Shuffles of copulas and ... \footnote{The second and third  authors are partially supported by the Centre of Excellence in Mathematics, the Commission on Higher Education, Thailand.}}
%\author{P.~Ruankong, T.~Santiwipanont\footnote{Corresponding author}~and S.~Sumetkijakan\\
%Department of Mathematics, Faculty of Science\\ Chulalongkorn University, \mbox{Phyathai Road}, Patumwan\\ Bangkok 10330, Thailand\\ and\\
%Centre of Excellence in Mathematics, CHE\\ Si Ayutthaya Rd., Bangkok 10400, Thailand\\
%\texttt{ruankongpol@gmail.com}, \texttt{tippawan.s@chula.ac.th} and \texttt{songkiat.s@chula.ac.th}}
%\maketitle

\begin{frontmatter}
\title{Shuffles of copulas and a new measure of dependence} % the $*$-norm}
\author[chula]{P.~Ruankong}
\ead{ruankongpol@gmail.com}

\author[chula,cem]{T.~Santiwipanont\corref{cor}\corref{thanks}}
\ead{tippawan.s@chula.ac.th}

\author[chula,cem]{S.~Sumetkijakan\corref{thanks}}
\ead{songkiat.s@chula.ac.th}

\address[chula]{Department of Mathematics and Computer Science, Faculty of Science, Chulalongkorn University, \mbox{Phyathai Road}, Patumwan, Bangkok 10330, Thailand}

\address[cem]{Centre of Excellence in Mathematics, CHE, Si Ayutthaya Rd., Bangkok 10400, Thailand}

\cortext[cor]{Corresponding author}
\cortext[thanks]{The second- and third-named authors are partially supported by the Centre of Excellence in Mathematics, the Commission on Higher Education, Thailand.}

\begin{abstract}
Using a characterization of Mutual Complete Dependence copulas, we show that, with respect to the Sobolev norm, the MCD copulas can be approximated arbitrarily closed by shuffles of Min. This result is then used to obtain a characterization of generalized shuffles of copulas introduced by Durante, Sarkoci and Sempi in terms of MCD copulas and the $*$-product discovered by Darsow, Nguyen and Olsen.    
Since shuffles of a copula is the copula of the corresponding shuffles of the two continuous random variables, we define a new norm which is invariant under shuffling.  This norm gives rise to a new measure of dependence which shares many properties with the maximal correlation coefficient, the only measure of dependence that satisfies all of R\'{e}nyi's postulates.

%Since a copula and its shuffles might not have the same Sobolev norm,  we define a new norm  %, called the $*$-norm, 
%with respect to which they always have the same norm.  As a consequence, we introduce a new measure of dependence which shares many properties with the measure of mutual complete dependence defined by Siburg and Stoimenov.  Two exceptions are that our new measure of dependence is invariant under transformations by Borel measurable bijections; and that completely dependent random variables also possess maximum measure of dependence.
%
%We show that every copula of mutually completely dependent random variables is supported on the graph of a measure-preserving bijection, and that if, in addition, the bijection is piecewise continuous then the copula is a shuffle of Min.
%  We then prove that the shuffles of Min are dense with respect to the Sobolev norm in the class of mutual complete dependence copulas. 
%  A measure-theoretic approach to shuffles of Min was studied by Durante et al.~from which generalized shuffles of Min and shuffles of copulas were introduced. Using our result on denseness of shuffles of Min in the unit norm copulas, we show that generalized shuffles of Min are exactly those copulas with unit Sobolev norm while shuffles of a copula $C$ are always of the form $A*C$ where $A$ is a copula with unit norm and $*$ is the binary operation discovered by Darsow et al., and vice versa. This relationship helps shortening proof of Durante et al.'s characterization of copulas whose orbit is singleton.
\end{abstract}

\begin{keyword}
%% keywords here, in the form: keyword \sep keyword
copulas \sep shuffles of Min \sep measure-preserving \sep Sobolev norm
$*$-product \sep shuffles of copulas \sep measure of dependence

%% PACS codes here, in the form: \PACS code \sep code

%% MSC codes here, in the form: \MSC code \sep code
%% or \MSC[2008] code \sep code (2000 is the default)
\MSC 28A20 \sep 28A35 \sep 46B20 \sep 60A10 \sep 60B10
\end{keyword}
\end{frontmatter}

\section{Introduction}
Since the copula of two continuous random variables is scale-invariant, 
%i.e.~invariant under strictly increasing transformations of random variables, 
copulas are regarded as the functions that capture dependence structure between random variables.  
%
%The importance of copulas owe greatly to the fact followed from Sklar's theorem that they capture the dependence structure between random variables. 
For many purposes, independence and monotone dependence have so far been considered two opposite extremes of dependence structure. 
%It is well-known that two random variables are independent if and only if their copula is the product copula $\Pi(x,y)=xy$ and that the copula of the continuous random variables $X$ and  $Y$ is the {Fr{\'e}chet}-{Hoeffding} upper bound $M$ (lower bound $W$, respectively) if and only if$Y$ is almost surely an increasing (decreasing, respectively) function of $X$. 
%
%The widely used Pearson correlation coefficient $\rho$ lacks the ability to extract this information  as there exist random variables $X,Y$ which are strictly monotone of each other but possess very low correlation. See \cite{Embrechts}.  %$\rho(X,Y) = 0$ only means that ... and $\rho(X,Y)=\pm 1$  
%In the same year that the Sklar's theorem appeared in \cite{sklar59}, R\'{e}nyi \cite{renyi1959} proposed a set of seven postulates for an appropriate measure of dependence between two random variables and showed that the maximal correlation coefficient introduced by Gebelein \cite{geb41} is an example of such a measure.  
%
However, monotone dependence is just a special kind of dependence between two random variables.  More general complete dependence happens when functional relationship between continuous random variables are piecewise monotonic, which corresponds to their copula being a shuffle of Min.  See \cite{%stoimenov2008mmc,
siburg2008spc,Siburg2009mmc}.   Mikusinski et al.~\cite{mikusinski1992sm,mikusinski1991probabilistic} showed that shuffles of Min is dense  in the class of all copulas with respect to the uniform norm.  This surprising fact urged the discovery of the (modified) Sobolev norm by Siburg and Stoimenov \cite{Siburg2009mmc} which is based on the $*$-operation introduced by Darsow et al.~\cite{darsow1992copulas,darsow1995nc,olsen1996copulas}.  They \cite{darsow1992copulas,darsow1995nc,olsen1996copulas,siburg2008spc,Siburg2009mmc} showed that continuous random variables $X$ and $Y$ are mutually completely dependent, i.e.~their functional relationship is any Borel measurable bijection, if and only if their copula has unit Sobolev-norm.  
Darsow et al.~\cite{darsow1992copulas,darsow1995nc,olsen1996copulas} 
%defined a binary operation $*$ on the bivariate copulas and 
showed that for a real stochastic processes $\{X_t\}$, the validity of the Chapman-Kolmogorov equations is equivalent to the validity of the equations $C_{st} = C_{su}*C_{ut}$ for all $s<u<t$, where $C_{st}$ denotes the copula of $X_s$ and $X_t$.  It is then natural to investigate how dependence levels of $A$ and $B$ are related to that of $A*B$. Aside from $\Pi$, $M$ and $W$, the easiest case is when $A$ and $B$ are mutual complete dependence copulas. %, that is $\norm{A}=1=\norm{B}$.
In light of our result on denseness of shuffles of Min in the MCD copulas, we shall show 
%as the very first step that 
that 
%a MCD copula can be approximated arbitrarily close with respect to the Sobolev norm by shuffles of Min.
%In fact, the closure of the shuffles of Min is exactly the set of unit norm copulas. 
%A straightforward computation yields that the $*$-product of shuffles of Min is a shuffle of Min. 
%Therefore, if $A$ and $B$ are copulas with norm one, they can be approximated by shuffles of Min and, since $*$ is jointly continuous with respect to $\norm{\cdot}$, $A*B$ can be approximated by shuffles of Min. To sum up,
if $\norm{A} = \norm{B} = 1$ then $\norm{A*B} = 1$.  
Now, if $\norm{A}=1$ and $C$ is a copula then we prove that $A*C$ coincides with a generalized shuffle of $C$ in the sense of Durante et al.~\cite{durante2008sc}.   We also give similar characterizations of shuffles of $C$ and generalized shuffles of Min.  %In \cite{durante2008sc}, Durante et al.~gave a measure-theoretic extension of shuffles of Min to generalized shuffles of Min and shuffles of copulas.  A (generalized) shuffle of copula $C$ is essentially the copula which can be obtained from shuffling vertical stripes of $C$.  %To demonstrate an application of our result, 
%We then use the denseness of shuffles of Min in the unit norm copulas to show that generalized shuffles of Min are exactly those copulas with unit Sobolev norm while generalized shuffles of a copula $C$ are always of the form $A*C$ where $A$ is a copula with unit norm and vice versa.  %Also a shuffle of $C$ is exactly the $*$-product of a shuffle of Min and $C$. 
These characterizations have advantages of simplicity in calculations because it avoids using induced measures.  Then we use this relationship to obtain a simple proof of a characterization of copulas whose orbit is singleton (Theorem 10 in \cite{durante2008sc}).  
Note that there are many examples where shuffles of $C$, i.e.~$A*C$ or $C*A$, do not  have the same Sobolev norm as $C$.  However, we show that multiplication by unit norm copulas preserves independence, complete dependence and mutual complete dependence.
%However, shuffles of $C$ do not usually have the same norm as $C$.
%Finally, we give a class of examples showing that (generalized) shuffles of a copula $C$ does not necessarily have the same norm as $C$.
%[shuffling maps]

Since left- and right-multiplying a copula $C=C_{X,Y}$ by unit norm copulas amount to ``shuffling'' or ``permuting'' $X$ and $Y$ respectively, we introduce a new norm, called the $*$-norm, which is invariant under multiplication by a unit norm copula.  Mutual complete dependence copulas still has $*$-norm one.  This invariant property implies that complete dependence copulas also possess unit $*$-norm.  Based on the $*$-norm, a new measure of dependence is defined in the same spirit as the definition by Siburg et al.~\cite{Siburg2009mmc}.  It turns out that this new measure of dependence satisfies most of the seven postulates proposed by R\'{e}nyi \cite{renyi1959}.  The only known measure of dependence that satisfies all R\'{e}nyi's postulates is the maximal correlation coefficient.

This manuscript is structured as follows. We shall summarize related basic properties of copulas, the binary operator $*$ and the Sobolev norm % and Durante et al.'s definitions of generalized shuffles of Min and copulas 
in Section 2.  Then we obtain a characterization of copulas with unit Sobolev norm which implies that the $*$-product of MCD copulas is a MCD copula in Section 3. %and show that its support is the graph of a measure-preserving bijection over $[0,1]$. 
%Section 4 contains a proof of a main theorem stating that copulas with norm one are exactly those copulas which can be approximated arbitrarily close in the Sobolev norm by shuffles of Min. 
Section 4 contains our characterizations of generalized shuffles of Min and (generalized) shuffles of copulas in the sense of Durante et al.~in terms of the $*$-product.  We then show that shuffling a copula preserves independence, complete dependence and mutual complete dependence.  In Section 5, a new norm is introduced and its properties are proved.  And in Section 6, we define a new measure of dependence and verify that it satisfies most of R\'{e}nyi's postulates.

\section{Basics of copulas}
A \emph{bivariate copula} is defined to be a joint distribution function of two random variables with uniform distribution on $[0, 1].$ Since such a joint distribution is uniquely determined by its restriction on $[0, 1]^2$ one can also define a copula as a function $C \colon [0, 1]^2 \to [0, 1]$ satisfying the following properties
\begin{equation}
C(u, 0) = 0 = C(0, v),\; C(u, 1) = u,\; C(1, v) = v,\quad\text{ and}
\end{equation}
\begin{equation}
C(u, v)-C(u, y)-C(x, v)+C(x, y) \geq 0
\end{equation}
for all $(u, v) \in [0, 1]^2$ and $(x, y) \in [0, 1]^2$ such that $x \leq u, y \leq v.$ Note that the two definitions are equivalent.  Every copula $C$ induces a measure $\mu_C$ on $[0,1]^2$ by $$\mu_C([x,u]\times[y,v]) = C(u, v)-C(u, y)-C(x, v)+C(x, y).$$  The induced measure $\mu_C$ is doubly stochastic in the sense that for every Borel set $B$, $\mu_C([0,1]\times B) = m(B) = \mu_C(B\times[0,1])$ where $m$ is Lebesgue measure on $\R$.
Important copulas include the Fr\'echet-Hoeffding upper and lower bounds
$$M(x, y) = \min(x, y), \quad W(x,y) = \max(x+y-1,0)$$
and the product, or independent, copula $\Pi(x, y) = xy$.
A fundamental property is that $M$ is a copula of $X$ and $Y$ \draftnote{continuous?} if and only if $Y$ is almost surely an increasing bijective function of $X$. If $X$ and $Y$ are uniformly distributed on $[0,1]$ then that bijection is the identity map on $[0,1]$. Its graph, the main diagonal, is the support of the induced measure $\mu_M$, also called the support of $M$.
At the other extreme, the minimum copula $W(x,y) = \max(x+y-1,0)$ corresponds to random variables being monotone decreasing function of each other.  \draftnote{continuous?}

Listed below are some basic properties of any copula $C$, some of which shall be used frequently in the manuscript.
\begin{enumerate}
  \item $W(x, y) \leq C(x, y) \leq M(x, y)$ for all $x, y \in [0, 1].$
  \item $\abs{C(u,v) - C(x,y)} \leq |u-x|+|v-y|$ $\forall (u,v),(x,y)\in[0,1]^2$ and hence $C$ is uniformly continuous.
  \item $\partial_1C$ and $\partial_2C$ exist almost everywhere on $[0,1]^2$.
  \item For a.e.~$x\in[0,1]$, $\partial_1C(x,\cdot)$ is nondecreasing in the domain where it exists and similar statement holds for $\partial_2(\cdot,y)$.
\end{enumerate}
Perhaps, the most important property of copulas is given by the Sklar's theorem which states that to every joint distribution function $H$ of continuous random variables $X$ and $Y$ with marginal distributions $F$ and $G,$ respectively, there corresponds a unique copula $C$, called the \emph{copula of $X$ and $Y$} for which
\[H(x, y) = C(F(x), G(y))\] for all $x, y \in \R.$ This means that the copula of $(X, Y)$ captures all dependence structure of the two random variables. $X$ and $Y$ are said to be \emph{mutually completely dependent} if there exists an invertible Borel measurable function $f$ such that $P(Y = f(X)) = 1.$ Shuffles of Min were introduced by Mikusinski et al.~\cite{mikusinski1992sm} as examples of copulas of mutually completely dependent random variables. By definition, a shuffle of Min is constructed by shuffling (permuting) the support of the Min copula $M$ on $n$ vertical strips subdivided by a partition $0=a_0<a_1<\cdots<a_n=1$.
It is shown 
\cite[Theorems 2.1 \& 2.2]{mikusinski1992sm} 
that the copula of $X$ and $Y$ is a shuffle of Min if and only if there exists an invertible Borel measurable function $f$ with finitely many discontinuity points such that $P(Y=f(X))=1$. In  \cite{mikusinski1992sm}, such an $f$ is called strongly piecewise monotone function. 

Following \cite{darsow1992copulas,darsow1995nc}, the binary operation $*$ on the set $\mathcal{C}_2$ of all bivariate copulas is defined as
\[C*D(x,y) = \int_0^1 \partial_2C(x,t)\partial_1D(t,y)\,dt\quad\text{for } x,y\in[0,1]\]
and the \emph{Sobolev norm} of a copula $C$ is defined  by \[\norm{C}^2 = \int_0^1\int_0^1\abs{\nabla C(x,y)}^2\,dx\,dy = \int_0^1\int_0^1\paren{\partial_1C^2(x,y)+\partial_2C^2(x,y)}\,dx\,dy.\] 
%while the corresponding inner product introduced in \cite{siburg2008spc} is \[\iprod{C,D} = \int_0^1\int_0^1 \nabla C(x,y)\cdot\nabla D(x,y)\,dx\,dy.\]
%By direct computations, for any copula $C$, \[\Pi*C = \Pi = C*\Pi \quad\text{ and }\quad M*C = C = C*M\] hence
It is well-known that $(\mathcal{C}_2,*)$ is a monoid with null element $\Pi$ and identity $M$. So a copula $C$ is called \emph{left invertible} (\emph{right invertible}) if there is a copula $D$ for which $D*C = M$ ($C*D=M$).
It was shown in 
\cite[Theorem 7.6]{darsow1992copulas} and \cite[Theorem 4.2]{darsow1995nc} that the $*$-product on $\mathcal{C}_2$ is jointly continuous with respect to the Sobolev norm but not with respect to the uniform norm. Moreover, they \cite{darsow1992copulas,darsow1995nc,siburg2008spc} gave a statistical interpretation of the Sobolev norm of a copula.
\begin{thm}[{\cite[Theorems 4.1-4.3]{Siburg2009mmc}}]  
\label{thm:normC}
  Let $C$ be a bivariate copula of continuous random variables $X$ and $Y$. Then 
%  \begin{enumerate}
%    \item
1.) $\displaystyle\frac{2}{3} \leq \norm{C}^2 \leq 1$; 
%    \item
2.) $\displaystyle\norm{C}^2 = \frac{2}{3}$ if and only if $C = \Pi$; and 
%    \item \label{thm:normC:3} 
3.) The following are equivalent.
    \begin{enumerate}
    \renewcommand{\theenumi}{\alph{enumi}}%
      \item $\norm{C}=1$.
      \item $C$ is invertible with respect to $*$.
      \item \label{thm:unitnorm3} For each $x,y\in[0,1]$, $\partial_1C(\cdot,y), \partial_2C(x,\cdot) \in \set{0,1}$ a.e.
      \item There exists a Borel measurable bijection $h$ such that $Y=h(X)$ a.e.
    \end{enumerate}
%  \end{enumerate}
\end{thm}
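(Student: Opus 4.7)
The plan is to treat the three statements of the theorem in order. For the bounds in (1), the upper bound uses the $1$-Lipschitz property of $C$ in each variable (item 2 of the basic properties listed above), which gives $0 \le \partial_i C \le 1$ a.e., hence $(\partial_i C)^2 \le \partial_i C$ a.e.\ on $[0,1]^2$. Combined with $\int_0^1 \partial_1 C(x,y)\,dx = C(1,y) - C(0,y) = y$ and its symmetric counterpart, this yields $\int_0^1\!\!\int_0^1 \partial_1 C = \int_0^1\!\!\int_0^1 \partial_2 C = 1/2$, and so $\norm{C}^2 \le 1$. For the lower bound, I would apply Jensen's inequality pointwise: for fixed $y$, $\int_0^1 (\partial_1 C(x,y))^2\,dx \ge y^2$, and integrating in $y$ gives $\int_0^1\!\!\int_0^1 (\partial_1 C)^2 \ge 1/3$; the symmetric estimate for $\partial_2 C$ then produces $\norm{C}^2 \ge 2/3$. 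For (2), equality in Jensen forces $\partial_1 C(x,y) = y$ for a.e.\ $(x,y)$; integrating in $x$ recovers $C(x,y) = xy$, and conversely $\norm{\Pi}^2 = 2/3$ by direct computation.

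For (3), I would close the cycle (a) $\Rightarrow$ (c) $\Rightarrow$ (d) $\Rightarrow$ (a) and invoke earlier work for (a) $\Leftrightarrow$ (b). The implication (a) $\Rightarrow$ (c) is immediate from the upper-bound argument: equality in $(\partial_i C)^2 \le \partial_i C$ on a set of full measure forces $\partial_i C \in \set{0,1}$ a.e. For (c) $\Rightarrow$ (d), the key interpretation is that $\partial_1 C(x,y) = P(Y \le y \mid X = x)$ is the conditional c.d.f.\ of $Y$ given $X = x$, which is nondecreasing in $y$ (property 4). If, for each fixed $y$, this function is $\set{0,1}$-valued in $x$ a.e., then combining with the monotonicity in $y$ for a.e.\ $x$ shows that for a.e.\ $x$ the conditional distribution of $Y$ is a point mass at $h(x) = \inf\set{y : \partial_1 C(x,y) = 1}$. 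Measurability of $h$ follows from this formula, and the symmetric hypothesis on $\partial_2 C$ yields an essential inverse, making $h$ a Borel measurable bijection.

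For (d) $\Rightarrow$ (a), a direct computation gives $C(x,y) = m([0,x] \cap h^{-1}([0,y]))$, whence $\partial_1 C(x,y) = \mathbf{1}_{\set{h(x) \le y}}$ and $\partial_2 C(x,y) = \mathbf{1}_{\set{h^{-1}(y) \le x}}$ are both $\set{0,1}$-valued, so $\norm{C}^2 = \int_0^1\!\!\int_0^1 (\partial_1 C + \partial_2 C) = 1$. For the equivalence (a) $\Leftrightarrow$ (b), I would appeal to the Darsow--Nguyen--Olsen framework already invoked in the excerpt: $*$ is Markov composition with identity $M$, and invertibility in $(\mathcal{C}_2, *)$ corresponds precisely to the condition that the Markov kernel of $C$ be an (a.s.) measure-preserving bijection, which is (c).

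The main obstacle, I expect, is the step (c) $\Rightarrow$ (d): the almost-everywhere qualifiers in $x$ and $y$ must be carefully synchronized, the nondecreasing property of $\partial_1 C(x,\cdot)$ has to be promoted from ``at points of differentiability'' to a true monotone representative defined for all $y$, and the inverse of $h$ must be produced on a common full-measure set from the symmetric hypothesis on $\partial_2 C$. Standard disintegration and selection arguments resolve these issues, but they require genuine care about the null sets involved.
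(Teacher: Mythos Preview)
The paper does not supply its own proof of this theorem: it is quoted verbatim from Siburg and Stoimenov \cite{Siburg2009mmc} (Theorems 4.1--4.3) and stated without argument, so there is nothing in the paper to compare your proposal against directly. Your outline is essentially the standard one and is correct. The upper and lower bounds in (1) and the characterization in (2) via Jensen are exactly how these are usually derived, and your cycle (a) $\Rightarrow$ (c) $\Rightarrow$ (d) $\Rightarrow$ (a) together with the Darsow--Nguyen--Olsen result for (a) $\Leftrightarrow$ (b) is the natural route.

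One point worth noting: the delicate step you flag, namely (c) $\Rightarrow$ (d) with its synchronization of null sets and construction of the measure-preserving bijection $h$, is precisely what the paper takes up in detail at the start of Section~3 and in Theorem~\ref{thm:suppcopulanormone}. There the authors define $f(x)=\sup\{y:\partial_1 C(x,y)=0\}$ on a full-measure set $\tilde I$, verify measurability via $\{f>\alpha\}=\bigcup_n\{\partial_1 C(\cdot,\alpha+1/n)=0\}$, build the companion function $g$ from $\partial_2 C$, and argue that the graphs of $f$ and $g^{-1}$ coincide by a rectangle-avoidance argument. So while the paper does not prove Theorem~\ref{thm:normC} as such, it does flesh out exactly the part of your sketch that you correctly identified as the main obstacle.
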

It follows readily that all shuffles of Min have norm one.

\section{Copulas with unit Sobolev norm}
 Let $C$ be a copula with unit Sobolev norm. Then %{thm:normC},
  $\partial_1 C(x, y)$ and $\partial_2 C(x, y)$ take values $0$ or $1$ almost everywhere. See, for example, Theorem 7.1 in \cite{darsow1992copulas} and Theorem 4.2 in \cite{Siburg2009mmc}.   Let us recall from \cite[Theorem 2.2.7]{nelsen2006ic}  %\draftnote{add ref details} 
 that, for a.e.~$x \in [0, 1], \partial_1 C(x, y)$ is a nondecreasing function of $y \in [0, 1].$ Similar statement holds also for $\partial_2 C(x, y).$
 So for a.e.~$x \in [0, 1],$ there is $f(x) \in [0, 1]$ such that for almost every $y$, $\partial_1 C(x, y) = 1$ if $y > f(x)$ and $\partial_1 C(x, y) = 0$ if $y < f(x).$ %(Put $f(x) = \sup\{ y\colon \partial_1 C(x, y)=0 \} = \inf\{ y \colon \partial_1 C(x, y)=1 \}$.)
 ($f(x) \equiv \sup\{ y\colon \partial_1 C(x, y)=0 \}$) Denote the set of such $x$'s by $\tilde I$ so that $m(\tilde I)=1$.  And for every $x\in \tilde I$, by redefining $\partial_1C(x,y)$ on a set of measure zero, we may assume that $\partial_1C(x,y)$ is defined and nondecreasing for all $y\in[0,1]$. To show that $f$ is measurable, let $\alpha \in [0, 1],$ and observe that since $\partial_1C(x,y)$ is increasing in $y$
\begin{align*}
  \{ x\in \tilde I \colon f(x) > \alpha \}
  &= \{ x\in \tilde I \colon \exists y > \alpha, \partial_1 C(x, y) = 0 \} \\
  &= \bigcup_{n=1}^{\infty} \{ x\in \tilde I \colon \partial_1 C\paren{x, \alpha + \frac{1}{n}} = 0 \}
\end{align*}
which is measurable because each $\partial_1 C(\cdot, \alpha + \frac{1}{n})$ is measurable.
In exactly the same fashion, there exists a measurable function $g\colon [0, 1] \to [0, 1]$ for which
$$\partial_2 C(x, y) = \begin{cases}
  1 &\text{if } x > g(y) \\
  0 &\text{if } x < g(y)
\end{cases} \;
\text{ for a.e.~} y, \text{ a.e.~} x.$$
Let us recall the definition that a measurable function $\phi\colon[0,1]\to[0,1]$ is said to be \emph{measure-preserving} if $m(\phi^{-1}(B)) = m(B)$ for any Lebesgue measurable set $B\subseteq[0,1]$. 
\begin{thm}\label{thm:suppcopulanormone}
  Suppose $C$ is a copula with unit Sobolev norm. Then there exists a unique invertible Borel measurable function $f\colon [0, 1] \to [0, 1]$ such that $f$ is measure-preserving and for almost every $(x,y)$ in $[0,1]^2$
  \begin{equation}\label{eq:f01}
   \partial_1 C(x, y) = \begin{cases}
    1 \; &\text{ if } y > f(x) \\
    0 \; &\text{ if } y < f(x)
  \end{cases} \;\text{ and }\;\;
  \partial_2 C(x, y) = \begin{cases}
    1 \; &\text{ if } x > f^{-1}(y) \\
    0 \; &\text{ if } x < f^{-1}(y).
  \end{cases}
  \end{equation}
  Furthermore, if $f$ is continuous on an interval $I$ then it is differentiable on $I$ with constant derivative equal to either $1$ or $-1$. 
\end{thm}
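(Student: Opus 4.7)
The construction of $f$ (and of the symmetric $g(y) = \sup\{x : \partial_2 C(x,y) = 0\}$) has already been carried out in the paragraphs preceding the theorem, so my starting point is a pair of measurable functions whose level sets describe $\partial_1 C$ and $\partial_2 C$ almost everywhere. I would first check that $f$ is measure-preserving by exploiting the marginal identities of $C$: for any $\alpha \in [0,1]$,
$$\alpha = C(1,\alpha) - C(0,\alpha) = \int_0^1 \partial_1 C(x,\alpha)\,dx.$$
Since for a.e.~$x$ the integrand equals $0$ when $f(x) > \alpha$ and $1$ when $f(x) < \alpha$, sandwiching gives $m(\{f<\alpha\}) \le \alpha \le m(\{f\le\alpha\})$ for every $\alpha$. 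Hence each level set $\{f=\alpha\}$ is null and $m(\{f<\alpha\}) = \alpha$, so $f$ is measure-preserving; the same argument handles $g$.

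Next, to upgrade $f$ to a Borel bijection and to identify $g$ with $f^{-1}$, I would invoke Theorem~\ref{thm:normC}(d) to produce a Borel bijection $h$ with $Y = h(X)$ a.e., which means $\mu_C$ is the push-forward of Lebesgue measure under $x \mapsto (x,h(x))$, so that
$$C(x_0,y_0) = m\bigl(\{t \le x_0 : h(t) \le y_0\}\bigr).$$
Comparing with
$$C(x_0,y_0) = \int_0^{x_0} \partial_1 C(t,y_0)\,dt = m\bigl(\{t \le x_0 : f(t) < y_0\}\bigr)$$
for all $(x_0,y_0)$ forces $\mathbf{1}_{\{h \le y_0\}} = \mathbf{1}_{\{f < y_0\}}$ a.e., and since both $f$ and $h$ have null level sets, $f = h$ a.e. Redefining $f$ on a null set (which does not affect (\ref{eq:f01})), we may take $f$ itself to be this Borel bijection, so $f^{-1}$ exists and is Borel measurable. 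A fully symmetric comparison for $\partial_2 C$ yields $g = f^{-1}$ a.e., giving the second half of (\ref{eq:f01}); uniqueness is immediate from the formula for $\partial_1 C$.

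For the final claim, suppose $f$ is continuous on an interval $I$. Being a continuous injection on an interval, $f|_I$ is strictly monotonic; assume it is increasing, so $f([c,d]) = [f(c),f(d)]$ for every $[c,d] \subset I$. Using measure-preservation and injectivity of $f$,
$$f(d) - f(c) = m\bigl([f(c),f(d)]\bigr) = m\bigl(f^{-1}([f(c),f(d)])\bigr) \ge m([c,d]) = d - c,$$
and the same inequality applied to the continuous measure-preserving bijection $f^{-1}$ on $f(I)$ gives the reverse. Hence $f(d) - f(c) = d - c$ throughout $I$, so $f$ is a translation with $f' \equiv 1$; the decreasing case gives $f' \equiv -1$.

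The main obstacle is the second step: passing from the pointwise-a.e.~recipe for $f$ to a genuine Borel bijection, which is what forces us to borrow the existence of $h$ from Theorem~\ref{thm:normC}(d). Once $f$ and $h$ are identified, measure-preservation and the identification $g = f^{-1}$ follow by direct marginal computations, and the differentiability statement reduces to a clean geometric consequence of measure-preservation on intervals.
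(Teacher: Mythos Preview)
Your proof is correct, but it proceeds quite differently from the paper's in all three parts.

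For measure-preservation, the paper instead verifies the identity $\mu_C(A\times B)=m(A\cap f^{-1}(B))$ for Borel rectangles and reads off measure-preservation from the doubly stochastic property; your direct use of the marginal $C(1,\alpha)=\alpha$ is shorter and gives the same conclusion. For invertibility and the identification $g=f^{-1}$, the paper does \emph{not} appeal to Theorem~\ref{thm:normC}(d); it argues intrinsically that an open rectangle misses the graph of $f$ iff it misses the graph of $g$ (by showing $\partial_1C$ constant in $y$ on such a rectangle forces $\partial_2C$ constant in $x$), and concludes the two graphs coincide. Your route---importing the Borel bijection $h$ from Theorem~\ref{thm:normC}(d) and matching $C(x_0,y_0)=m(\{t\le x_0:h(t)\le y_0\})$ against $m(\{t\le x_0:f(t)<y_0\})$---is cleaner, though one should note that the a.e.\ equality $\mathbf{1}_{\{h\le y_0\}}=\mathbf{1}_{\{f<y_0\}}$ is obtained for each fixed $y_0$, and a countable-$y_0$ (e.g.\ rational) argument is needed to deduce $f=h$ a.e.; this is routine. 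For the final claim, the paper computes $C(x,y)$ explicitly on a strip, differentiates in $y$ to exhibit $g'(y)$ as a difference of two $\{0,1\}$-valued functions, and forces $g'=1$; your argument using only that a measure-preserving bijection continuous on an interval must satisfy $f(d)-f(c)=d-c$ (and the analogous inequality for $f^{-1}$) is more elementary and sidesteps any differentiability bookkeeping.

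In short: the paper's proof is self-contained within the section and yields the useful formula $\mu_C(A\times B)=m(A\cap f^{-1}(B))$ along the way, while your version trades that for an external input (Theorem~\ref{thm:normC}(d)) and in return gets a noticeably slicker treatment of both measure-preservation and the slope-$\pm1$ conclusion.
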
 
\begin{remk}
  During the preparation of this manuscript, we have come across similar results such as 
  Proposition 1 in \cite{deAmo2010} and Theorem 2.4 and Corollary 2.4.1 in \cite{darsow2010}.
\end{remk}
\begin{proof}
We first claim that $f$ and $g$ defined above are inverses of each other in the sense that $f \circ g$ and $g \circ f$ are identity on $[0, 1]$ a.e., \draftnote{I don't think this can be proved via Thm 2.1(3)}
i.e.~$\set{x\colon x = g(f(x))}$ and $\set{y\colon y = f(g(y))}$ both have measure $1$. This is equivalent to saying that $y=f(x)$ if and only if $x=g(y)$ for a.e.~$(x,y)\in[0,1]^2$. 
%In order to show this, let us
Indeed, observe that
for any open interval $B\subset [0,1]$, $ f(x)\in B$ if and only if $\partial_1C(x,B) = \set{0,1}$.
Now let $A=(a_1,a_2)$ and $B=(b_1,b_2)$ be open intervals in $[0,1]$ for which $A\times B$ does not intersect the graph $y=f(x)$, i.e.~$m\paren{\set{x\in A\colon f(x)\in B}}=0$, hence $\partial_1C(x,y)$ is independent of $y\in B$ for a.e.~$x\in A$.
So $\partial_1 C(x, y) = \delta(x) \equiv 0 \text{ or } 1$ for a.e.~$x\in A$ and all $y\in B$.  Then for $ (x, y)$ in $A\times B $
%  \begin{align*}
  %  C(x, y) = \int_{0}^{x} \partial_1 C(t, y) \,dt
  %  &= \int_{0}^{x_1} \partial_1 C(t, y) \,dt + \int_{x_1}^{x} \delta(t) \,dt \\
 %   &= C(x_1, y) + \int_{x_1}^{x} \delta(t) \,dt
%  \end{align*}
  \begin{equation*}
    C(x, y) = \int_{0}^{x} \partial_1 C(t, y) \,dt
    = \int_{0}^{a_1} \partial_1 C(t, y) \,dt + \int_{a_1}^{x} \delta(t) \,dt 
    = C(a_1, y) + \int_{a_1}^{x} \delta(t) \,dt
  \end{equation*}
and so $\partial_2 C(x, y) = \partial_2 C(a_1, y)$ is independent of $x\in A$ which implies that $A\times B$ does not intersect the graph $x=g(y)$. The converse can be shown by a %completely 
similar argument.  Since the graph of a Borel function  \draftnote{Really?} is a Borel subset of $[0,1]^2$, $y=f(x)$ and $x=g(y)$ give the same graph. And the claim follows.
%Therefore, $f$ and $g$ are invertible and they are inverses of each other a.e. 

%Though this can also be proved by Theorem \ref{thm:normC}(3.), %\eqref{thm:normC:3}, 
%it would involve some uses of (non-invertible) marginal distributions, which may not be as simple as it seems.

Let $\mu_C$ denote the doubly stochastic measure associated with $C$.  A straightforward verification gives 
\begin{align} \label{eq:mp}
  \mu_C(A\times B) = m(A\cap f^{-1}(B))
\end{align}
for all Borel rectangles $A\times B$, which implies by a standard measure-theoretic technique that 
\eqref{eq:mp} holds for all Borel sets $A,B \subseteq [0,1]$. So $f$ is measure-preserving since it is equivalent to the validity of \eqref{eq:mp} for all Borel sets $A$ and $B$.

%We shall then show that $f$ is measure-preserving.  Since the doubly stochastic measure $\mu_C$ associated with the copula $C$ is supported on (the closure of) \draftnote{Check!} the graph of $f$, the function $f$ is measure-preserving if and only if $\mu_C(A\times B) = m(A\cap f^{-1}(B))$ for all Lebesgue measurable sets $A,B\subseteq [0,1]$.  See \cite{brown1965doubly}. But this can be proved by just verifying the equation for rectangles $A,B$.

Lastly, we prove that if $f$ is continuous on an open interval $I=(a,b)$ then it is differentiable with $f'$ being constant and equal to $\pm{1}$. %on $I$.  
Since $f$ is continuous and one-to-one on $I$, it has to be strictly monotonic on $I$.
Let us consider the case where $f$ is strictly increasing on $I$. This implies that $g=f^{-1}$ is strictly increasing on the interval $f(I)$ and that $y > f(x)$ if and only if $x < g(y)$ for $x\in I$.
  For $x\in [a_0, b_0]\subset (a,b),$
  \begin{align*}
    C(x, y)
      &= \int_{0}^{a_0} \partial_1 C(t, y) \,dt + \int_{a_0}^{x} \partial_1 C(t, y) \,dt
      = C(a_0, y) + \int_{a_0}^x \chi_{\set{t\colon y>f(t)}}\,dt\\
      &= C(a_0, y) + \int_{a_0}^x \chi_{[0,g(y))}\,dt
      = C(a_0, y) + \begin{cases}
         x - a_0  &\text{if } x < g(y), \\
         g(y) - a_0  &\text{if } x > g(y).
        \end{cases}
  \end{align*}
  Since $C(x,y)$ and $C(a_0,y)$ are differentiable with respect to $y$ almost everywhere, we have for a.e.~$y$, $$\partial_2 C(x, y) = \partial_2 C(a_0, y) + \begin{cases}
    0 \; &\text{ if } x < g(y) \\
    g'(y) \; &\text{ if } x > g(y).
  \end{cases}$$
  As $g'(y) > 0$ 
  and $\partial_2 C(x, y)$ and $\partial_2 C(a_0, y)$ are equal to $0$ or $1$, $g'(y) = 1$ and hence $f'(x)=1$ for all $x\in I$.
Similarly, if $f$ is strictly decreasing on $(a,b)$ then $f'= -1$ a.e.~on $(a,b)$.
\end{proof}

%\vspace{2cm}
A natural question is then to investigate the set on which an invertible measure-preserving function $f$ is continuous. Unfortunately, the support of a unit norm copula may be the graph of a function which is discontinuous on a dense subset of $[0,1]$, and hence there is no interval on which it is continuous.
\begin{exam}
  Define a sequence of shuffles of Min $\set{S_n}$ by letting $S_0$ be the comonotonic copula supported on the main diagonal. $S_1$ is defined so that it shares the same support with $S_0$ in $[0,\frac{1}{2}]\times[0,1]$ and its support in the other half $F_0\times[0,1]=[\frac{1}{2},1]\times[0,1]$ is that of $S_0$ flipped horizontally.
  $S_2$ is then obtained from $S_1$ by flipping the support in each stripe of the set $F_1\times[0,1]$ 
   where $F_1=[\frac{1}{2^2},\frac{1}{2}]\cup\paren{[\frac{1}{2^2},\frac{1}{2}]+\frac{1}{2}}$. For general $n\geq 1$, we define $F_n = \frac{1}{2}F_{n-1}\cup\paren{\frac{1}{2}F_{n-1}+\frac{1}{2}}$ and let the shuffle of Min $S_n$ be obtained from $S_{n-1}$ by flipping the support in each stripe of $F_n$ horizontally. To sum up, each shuffle of Min $S_n$ is supported on the graph $\set{(x,y)\colon y=f_n(x)}$ where $f_n$ is constructed according to the above iterative procedure, starting from $f_0(x)=x$ and $f_1(x) = x\,\chi_{[0,\frac{1}{2})} + \paren{\frac{3}{2}-x}\chi_{[\frac{1}{2},1]}$.
   The first few $S_n$'s are illustrated in Figure \ref{fig:S_n}. 
  \begin{figure}
  \label{fig:S_n}
  \begin{tabular}{@{}m{.35\textwidth}@{\hspace*{-.4cm}}m{.8\textwidth}@{}}
    \includegraphics[width=0.26\textwidth]{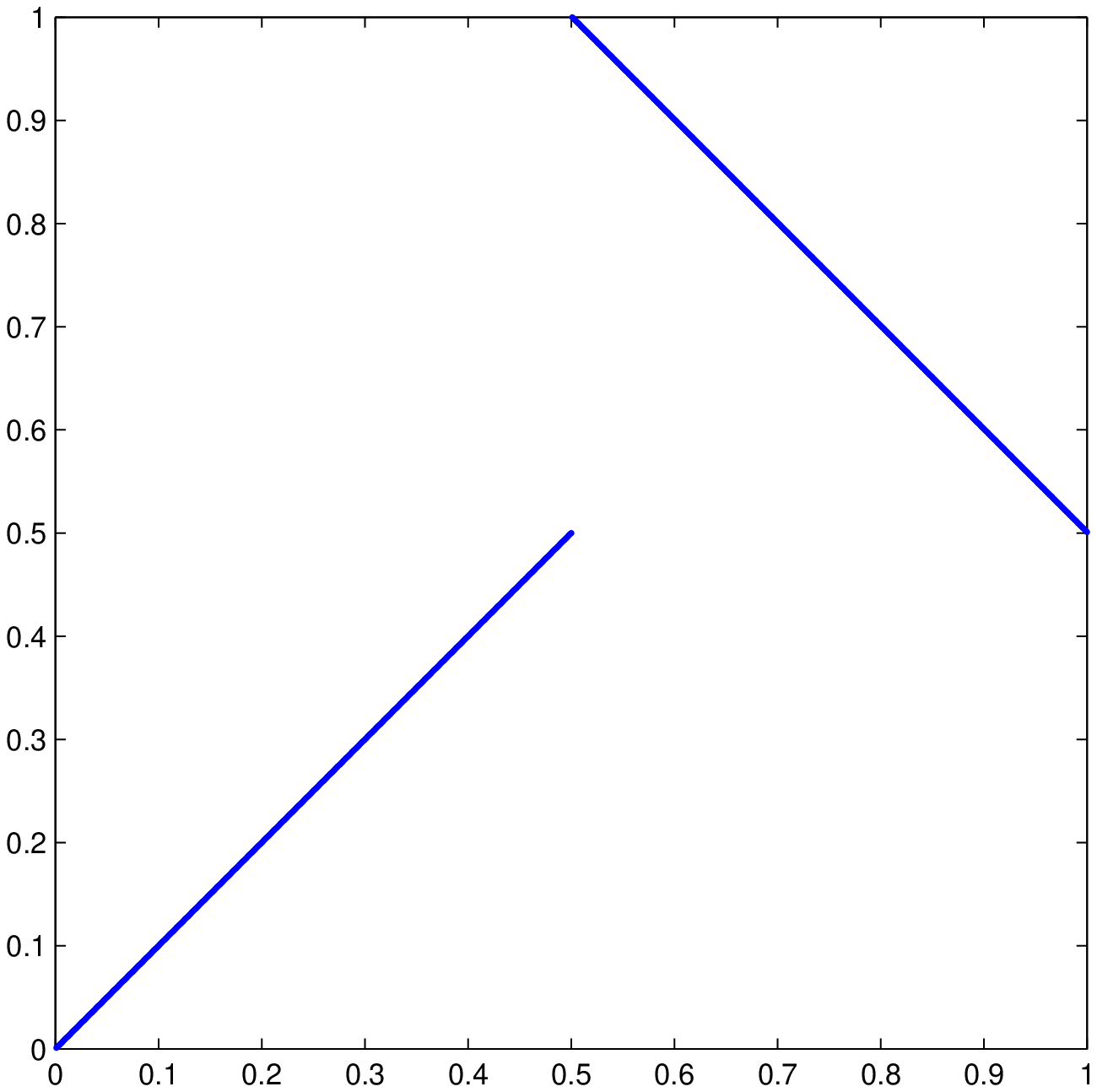} \includegraphics[width=0.26\textwidth]{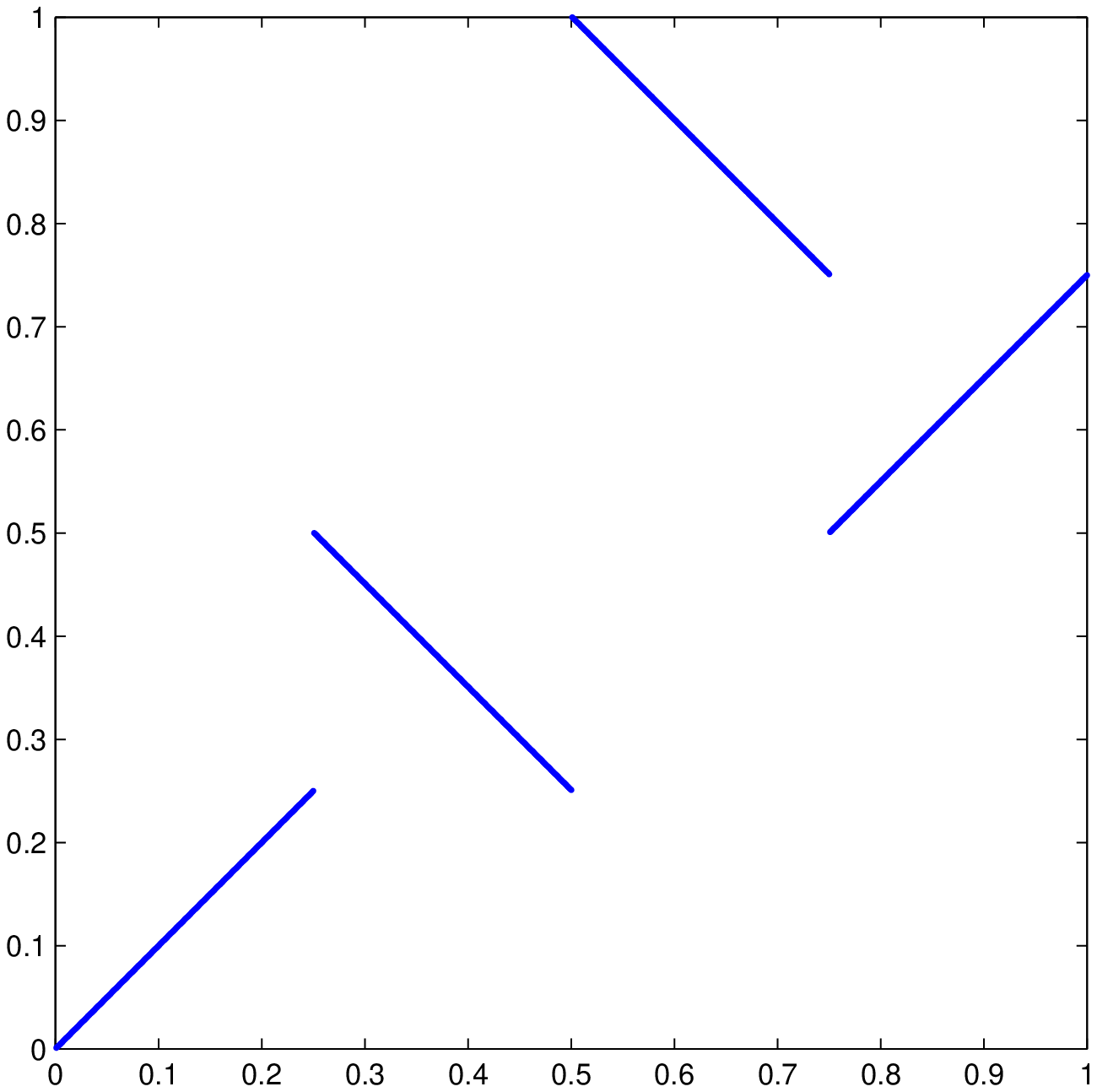} & \includegraphics[width=0.8\textwidth]{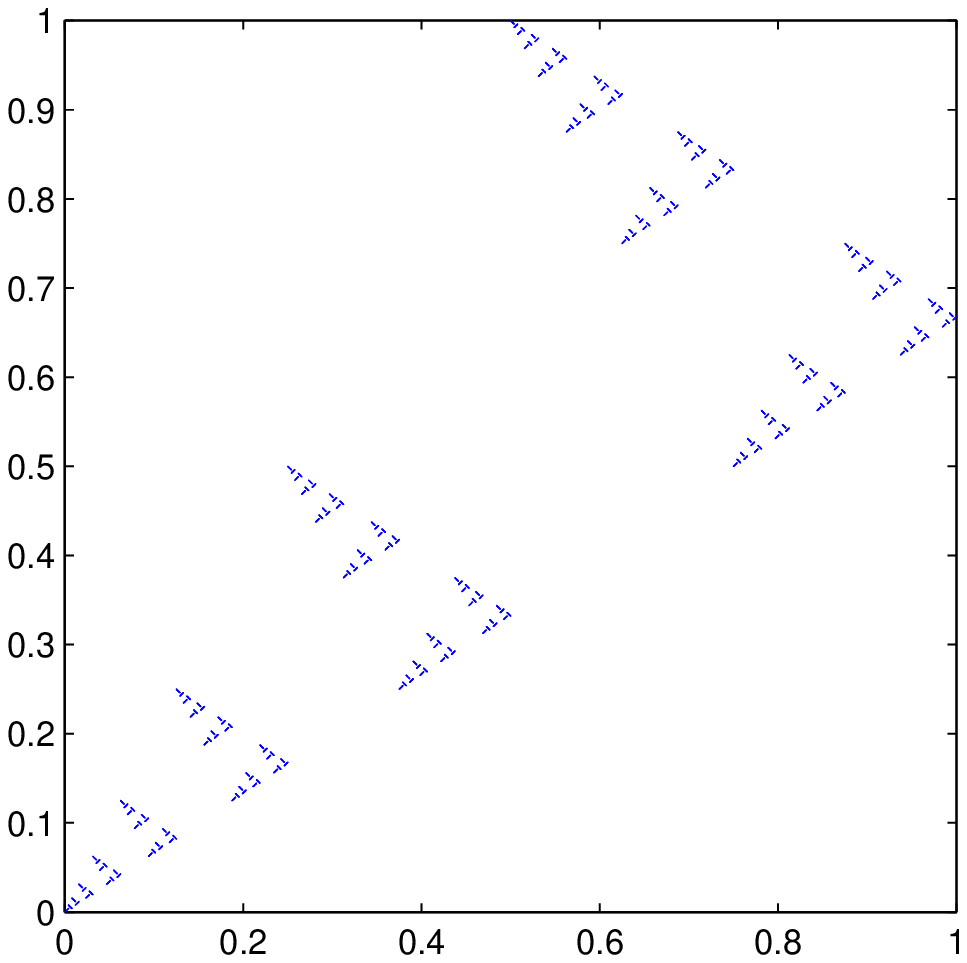}
  \end{tabular}
%\begin{center} Figures S\_1.eps, S\_2.eps, S\_8.eps here \end{center}
  \caption{$S_1$, $S_2$, and $S_8$}
  \end{figure}

  From construction, $F_n$ consists of $2^n$ stripes, each of width $\dfrac{1}{2^{n+1}}$.  On each of these stripes, the supports of $S_n$ and $S_{n-1}$ differ by a flip which implies that $\frac{\partial S_n}{\partial x}$ and $\frac{\partial S_{n-1}}{\partial x}$ are equal on the stripe except on two triangles of total area $\frac{1}{2}\paren{\frac{1}{2^{n+1}}}^2 = \frac{1}{2^{2n+3}}$ where $\abs{\frac{\partial S_n}{\partial x} - \frac{\partial S_{n-1}}{\partial x}} = 1$. Similarly, on each stripe of $F_n$, $\abs{\frac{\partial S_n}{\partial y} - \frac{\partial S_{n-1}}{\partial y}} = 1$ on two triangles of total area $\frac{1}{2^{2n+3}}$ and zero elsewhere. Therefore,
\[     \norm{S_n - S_{n-1}}^2 = \iint_{I^2}\paren{\frac{\partial S_n}{\partial x} - \frac{\partial S_{n-1}}{\partial x}}^2 + \paren{\frac{\partial S_n}{\partial y} - \frac{\partial S_{n-1}}{\partial y}}^2\,dx\,dy
      = \frac{1}{2^{n+2}}. \]
%   \begin{align*}
%     \norm{S_n - S_{n-1}}^2 &= \iint_{I^2}\paren{\frac{\partial S_n}{\partial x} - \frac{\partial S_{n-1}}{\partial x}}^2 + \paren{\frac{\partial S_n}{\partial y} - \frac{\partial S_{n-1}}{\partial y}}^2\,dx\,dy\\
%     &= \frac{2\cdot 2^n}{2^{2n+3}} = \frac{1}{2^{n+2}}.
%   \end{align*}
  Now, given $m<n$, $$\norm{S_n-S_m} \leq \sum_{k=m+1}^n\norm{S_k-S_{k-1}} = \frac{1}{2} \sum_{k=m+1}^n \frac{1}{\sqrt{2}^{k}} = \frac{\sqrt{2}^{-m-1}-\sqrt{2}^{-n-1}}{2-\sqrt{2}}$$ which converges to $0$ as $m,n\to\infty$. Since the set of copulas is complete with respect to the Sobolev norm (see p.~424 in \cite{darsow1995nc}), the Cauchy sequence $\set{S_n}$ converges to a copula $S$. It follows that $\norm{S}=1$. It can also be shown that the support of $S$ contains the graph of the pointwise limit $f$ of $f_n$.

  Finally, we shall show that the mutual complete dependence copula $S$ has support on the graph of a function discontinuous on the set of dyadic points in $[0,1]$.  In fact, it is straightforward to calculate the jump of $f$ at a dyadic point $\frac{k}{2^n}$ where $k$ is indivisible by $2$: $$\abs{f\paren{\frac{k}{2^n}+} - f\paren{\frac{k}{2^n}-}} = \frac{1}{2^{n}} - \frac{1}{2^{n+1}} + \frac{1}{2^{n+2}} - \dots = \frac{1}{3\cdot 2^{n-1}} > 0.$$ We note here that the support of $S$ is self-similar with Hausdorff dimension one.
\end{exam}

%\section{The closure of the shuffles of Min}

A surprising fact by Mikusinski, Sherwood and Taylor \cite[Theorem 3.1]{mikusinski1992sm} is that every copula, in particular the independence copula, can be approximated arbitrarily close in the uniform norm by a shuffle of Min. 
%In particular, the independence copula can be approximated in the sup norm by a shuffle of Min as near as one would like. 
%This result implies that the sup norm, i.e.~the uniform convergence topology, cannot distinguish dependence structures between copulas. 
Consequently, the uniform norm cannot distinguish dependence structures among copulas.  However, if $\set{S_n}$ is a sequence of shuffles of Min converging in the Sobolev norm to a copula $C$, then it is necessary that $\norm{C}=1$, hence $C$ is a copula of two mutually completely dependent random variables. 
%As a consequence, with respect to the Sobolev norm, shuffles of Min can only approximate copulas with unit norm. 
%Obviously, the limit copula $C$ might not be a shuffle of Min as the last example in the previous section shows. 
Conversely, one might ask whether any copula $C$ with $\norm{C}=1$ can be approximated arbitrarily close in the Sobolev norm by a shuffle of Min. 
We quote here without proof a result from \cite{chou1990frechet} which will be useful in answering the question. %A tool necessary to answer this question first appeared in \cite{chou1990frechet}. 
%Since their proof of Theorem 2.1 used a fact valid only in $\R$, i.e.~a limit point must be the limit from left or right, we shall present here our proof which avoids using the one-dimensional fact and hence is more open to generalizations.
\begin{thm}[Chou and Nguyen \cite{chou1990frechet}]\label{thm:ChouNguyen}
  For every measure-preserving function $f$ over $[0,1]$, there exists a sequence of bijective piecewise linear measure-preserving functions  $\set{f_n}$ whose slopes are either $+1$ or $-1$ and such that $f_n$ converges to $f$ a.e.
\end{thm}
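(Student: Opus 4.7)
The plan is to combine a Birkhoff--von Neumann matching on dyadic partitions with a Lusin approximation of $f$ in order to construct the approximating sequence.

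First, I would dyadically partition $[0,1]$ into the $2^n$ intervals $J_j^n = [(j-1)2^{-n}, j\,2^{-n})$ and form the $2^n \times 2^n$ matrix $P^n$ whose entries are $p^n_{jk} = 2^n\,m(J_j^n \cap f^{-1}(J_k^n))$. The measure-preserving property of $f$ forces every row sum and every column sum of $P^n$ to equal $1$, so $P^n$ is doubly stochastic. By the Birkhoff--von Neumann theorem, $P^n$ lies in the convex hull of permutation matrices, and hence there exists at least one permutation $\sigma_n$ of $\{1, \ldots, 2^n\}$ with $p^n_{j, \sigma_n(j)} > 0$ for every $j$ --- equivalently, via Hall's marriage theorem applied to the bipartite graph of positive entries. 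I then define $f_n$ to be affine on each $J_j^n$ with slope $+1$ or $-1$, mapping $J_j^n$ bijectively onto $J^n_{\sigma_n(j)}$. By construction $f_n$ is a bijective piecewise linear measure-preserving function with slopes in $\{+1, -1\}$, so the theorem reduces to verifying $f_n \to f$ almost everywhere.

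The a.e.\ convergence is the main obstacle, since Birkhoff by itself produces only \emph{some} $\sigma_n$ in the support of $P^n$, with no a priori relationship to the pointwise values of $f$. I would therefore apply Lusin's theorem to fix, for each $\eta > 0$, a compact set $K_\eta \subset [0,1]$ with $m(K_\eta) > 1 - \eta$ on which $f$ is uniformly continuous. For $n$ sufficiently large, the oscillation of $f$ on $K_\eta \cap J_j^n$ is smaller than $2^{-n}$ for all but a small fraction of indices $j$, so there is a single target $J^n_{k(j)}$ with $f(K_\eta \cap J_j^n) \subset J^n_{k(j)}$. Setting $\sigma_n(j) = k(j)$ on those ``good'' indices and then completing to a full permutation --- possible because the bipartite subgraph on the remaining indices still satisfies Hall's condition by double stochasticity of $P^n$ --- yields $|f_n(x) - f(x)| \leq 2 \cdot 2^{-n}$ off a set of measure at most $2\eta$. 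This is convergence in measure, and a standard subsequence/diagonal extraction then upgrades it to a.e.\ convergence (after relabelling).

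The hardest part is to simultaneously enforce (i) that $\sigma_n$ is a genuine permutation (the combinatorial constraint) and (ii) that $\sigma_n$ faithfully reflects the values of $f$ on a set of near-full measure (the analytic constraint). Birkhoff/Hall handles (i) and Lusin handles (ii), but the ``extension'' step --- completing the partial matching forced by fidelity to a bona fide permutation without destroying too much measure --- is the key technical lemma, and in some cases requires a further dyadic refinement past scale $2^{-n}$ to gain the combinatorial slack needed (as one sees already in the model case $f(x) = 2x \bmod 1$, where a single-scale matching gives at best $50\%$ accuracy, while a doubly refined partition succeeds).
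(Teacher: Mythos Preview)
Your outline is viable, but it takes a considerably harder route than the argument the paper records (essentially the original Chou--Nguyen proof). That proof bypasses Birkhoff--von~Neumann, Hall, and Lusin entirely. It partitions the \emph{range} into $n$ equal pieces and considers the preimages $E_k=f^{-1}\bigl([\tfrac{k-1}{n},\tfrac{k}{n}]\bigr)$, each of measure $1/n$. By elementary regularity of Lebesgue measure, every $E_k$ is approximated (to within $O(n^{-3})$ in symmetric difference) by a finite disjoint union of intervals $F_k=\bigcup_j I_{kj}$. After trimming overlaps among the $F_k$'s and padding so that they form a genuine interval partition of $[0,1]$ with $m(F_k)=1/n$, one defines $f_n$ to carry each $I_{kj}$ affinely with slope $\pm 1$ onto a matching subinterval of $[\tfrac{k-1}{n},\tfrac{k}{n}]$. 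This gives $|f-f_n|\le 1/n$ off a set of measure $<1/n$, hence convergence in measure and, after passing to a subsequence, a.e.\ convergence.

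The conceptual difference is that the paper approximates \emph{sets} (the preimages $E_k$) by intervals, so no combinatorial matching problem ever arises: the intervals for each target index $k$ are produced independently, and the only housekeeping is reconciling overlaps, which costs $O(n^{-2})$. Your route instead approximates the \emph{function} by a permutation of equal dyadic blocks, which creates the injectivity obstruction you correctly diagnose with $f(x)=2x\bmod 1$. Your proposed fix --- refining the domain partition finer than the range --- does work, but notice that it is exactly what the set-approximation argument does implicitly: the intervals $I_{kj}$ can be arbitrarily small relative to $1/n$, so many domain intervals are sent to each range interval. In effect your ``doubly refined'' scheme reconstructs the Chou--Nguyen construction, with Birkhoff and Lusin as scaffolding that the direct regularity-of-measure argument renders unnecessary.
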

\commentout{ %%%%%%%%%%%%%%%%%%%%%%%%%%%%%%%%%%%%%%%%%%%
\begin{proof}
 % Let $ 0< \eps <1$ be given.
 For each integer $n,$ partition $[0, 1]$ into $n$ equal subintervals and put $ E_k = f^{-1}([\frac{k-1}{n}, \frac{k}{n}]),$ where $k = 1, 2, \dots, n.$ Clearly, $\{ E_k \}_{k=1}^n $ is a partition of $[0, 1]$ a.e.
  Since $f$ is measure-preserving, each measurable set $E_k$ has measure $\frac{1}{n}$ and can be approximated by the union of a finite collection of disjoint closed interval $I_{kj} = [a_{k, j}, b_{k, j}], j = 1, \dots, N_k$, in the sense that $\displaystyle m\paren{E_k \setdiff \dot{\cup}_{j=1}^{N_k} I_{kj}} < \frac{1}{3n^3}$, (see \cite[Proposition 15 (p.62)]{royden1968real}) where the intervals $I_{kj}$ can be chosen so that
  \begin{equation}\label{eq:1}
    m\paren{\dot{\bigcup}_{j=1}^{N_k} I_{kj}} \leq m(E_k) = \frac{1}{n}.
  \end{equation}
   For convenience, let us put $F_k = \dot{\bigcup}_{j=1}^{N_k} I_{kj}.$ So $m(F_k \setminus E_k) \leq m(F_k \setdiff E_k) < \frac{1}{3n^3}.$ Now, if $ i \neq k $ then
   \begin{align*}
   F_i \cap F_k & \subseteq [ (F_i \setminus E_i) \cup E_i ] \cap [ (F_k \setminus E_k) \cup E_k ] \\
                & \subseteq [ (F_i \setminus E_i) \cap (F_k \setminus E_k) ] \cup (F_i \setminus E_i) \cup (F_k \setminus E_k) \cup (E_i \cap E_k)\\ %\overset{\emptyset}{\overbrace{(E_i \cap E_k)}} \\
                & = (F_i \setminus E_i) \cup (F_k \setminus E_k)
   \end{align*}
   where we have used the fact that $ E_i \cap E_k = \emptyset. $ Therefore,
   \begin{equation}\label{eq:2}
     m\paren{\paren{\bigcup_{i<k} F_i} \cap F_k}  \leq \sum_{i \leq k} m( F_i \setminus E_i )
                                    < \frac{k}{3n^3}
                                    \leq \frac{1}{3n^2}.
   \end{equation}
We also have $ m([0, 1] \setminus \bigcup_{k=1}^{n}F_k) < \frac{1}{3n^2}.$ In fact,
   \begin{align*}
    m\paren{[0, 1] \setminus \bigcup_{k=1}^{n}F_k} &=
    m\paren{\bigcup_{k=1}^{n} E_k \setminus \bigcup_{k=1}^{n} F_k} \leq
    m\paren{\bigcup_{k=1}^{n} (E_k \setminus F_k)} \\
      &=
    \sum_{k=1}^{n} m(E_k \setminus F_k) \qquad\; \leq
    \sum_{k=1}^{n} m(E_k \setdiff F_k) \\
      &<
    n \frac{1}{3n^3}  = \frac{1}{3n^2}.
   \end{align*}
As intervals $I_{kj}$ and $I_{k'j'}$ may overlap when $k \neq k', $ we have to do some cuttings as follows. For each $k=2, \dots, n, $ replace each $I_{kj}$ by the closure of $I_{kj} \setminus \cup_{l<k} F_l$ so that $I_{kj}$ and $I_{lp}$ do not overlap for $l < k.$ Observe that, after the cuts, the new $I_{kj}$ might be broken into a finite number of closed intervals and reindexing is then necessary. We shall denote this new and larger collection of disjoint closed intervals by the same notation. Clearly, we still have $$ m(F_k) = m\paren{\bigcup_{j=1}^{N_k}I_{kj}} \leq \frac{1}{n}, \quad \text{ for all } k = 1, \dots, n.$$ It is now necessary to investigate how these cuts affect \eqref{eq:1} for $k \geq 2.$ It follows from \eqref{eq:1} and \eqref{eq:2} that for the new $F_k$'s,
$$m(F_k \setdiff E_k) < \frac{1}{3n^3} + \frac{1}{3n^2} \leq \frac{2}{3n^2}.$$
Moreover, it is still the case that $m\paren{[0, 1] \setminus \cup_{k=1}^{n}F_k} < \frac{1}{3n^2}$ as the cuttings do not affect the union $\cup_{k=1}^{n} F_k.$

To sum up, we have constructed a finite number of disjoint intervals $I_{kj}$, $k = 1, \dots, n$, $j = 1, \dots, N_k$, for which the union of all $F_k = \cup_{j=1}^{N_k} I_{kj} $ has measure at least $1 - \frac{1}{3n^2}$ and $m(F_k \setdiff E_k) < \frac{2}{3n^2}.$

By appending new (disjoint) intervals $I_{kj}, j > N_k, $ we have, for each $k = 1, \dots, n, $ disjoint intervals $I_{kj}, j = 1, \dots, N'_{k} $ such that $$m\paren{\bigcup_{i=1}^{N'_k} I_{kj}} = \frac{1}{n}\quad\text{ and }\quad \bigcup_{k=1}^{n} \bigcup_{j=1}^{N'_k} I_{kj} = [0,1].$$
Since there are only finitely many intervals, it is obviously possible to construct non-overlapping intervals $T_{kj}, j = 1, \dots, N'_k $ for which $\bigcup_{j=1}^{N'_k} T_{kj} = [\frac{k-1}{n}, \frac{k}{n}]$ and $T_{kj}, I_{kj}$ are of the same length for all $k, j.$ This then gives rise to a piecewise linear function $f_n \colon [0, 1] \to [0, 1]$ such that $f_n(I_{kj}) = T_{kj}$ for all $k, j$ and $f_n$ has slope $1$ or $-1$ on the interior of these intervals. Note that $f_n\paren{\bigcup_{j=1}^{N'_k} I_{kj}} = \bigcup_{j=1}^{N'_k} f_n\paren{ I_{kj}} = \bigcup_{j=1}^{N'_k} T_{kj} = [\frac{k-1}{n}, \frac{k}{n}].$

It is now left to show that $m(\{ x\colon |f(x)-f_n(x)| > \frac{1}{n} \}) < \frac{1}{n}.$ For each $k = 1, \dots, n \quad F'_k = \cup_{j=1}^{N'_k} I_{kj} $ can be split into $F_k^1 = F_k \cap E_k, F^2_k = F_k \setminus E_k, $ and $F_k^3 = F'_k \setminus F_k. $ Recall that $m\paren{F_k^3} \leq m\paren{[0, 1] \setminus \cup_{k=1}^n F_k} < \frac{1}{2n^2}$, $m(F_k^2) \leq m(F_k \setdiff E_k) < \frac{2}{3n^2}$ and hence $m(F_k^1) > \frac{1}{n} - \frac{1}{3n^2} - \frac{2}{3n^2} = \frac{1}{n} - \frac{1}{n^2}.$

If $x\in F_k' = F_k\cap E_k$ then $f(x), f_n(x) \in [\frac{k-1}{n},\frac{k}{n}]$ and so $|f(x)-f_n(x)|\leq \frac{1}{n}$. Hence
 $$m\paren{\set{x \colon |f(x) - f_n(x)| > \frac{1}{n} }} \leq \sum_{k=1}^{n} \paren{m(F_k^2)+m(F_k^3)} < \sum_{k=1}^{n} \paren{\frac{2}{3n^2} + \frac{1}{3n^2}} = \frac{1}{n}.$$
 This readily implies that $f_n$ converges to $f$ in measure. By a well-known lemma in measure theory (see e.g.~\cite[Proposition 17 (p.92)]{royden1968real}),  
 there is a subsequence of $\set{f_n}$, also denoted by $\set{f_n}$, which converges to $f$ almost everywhere.
\end{proof}
} 	%%%%%%%%%%%%%%%%%%%%%%%%%%%%%%%%%%%%%%%%%%

\begin{lem}\label{lem:norm-supp} \draftnote{Similar result for C.D. copulas ?}
  Let $C_1$ and $C_2$ be copulas with norm one which are supported on the graphs of $f_1$ and $f_2$, respectively. Then
  \begin{equation}
    \norm{C_1-C_2}^2 \leq {2}\norm{f_1-f_2}_{L^1}.
  \end{equation}
\end{lem}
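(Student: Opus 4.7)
The plan is to expand the squared Sobolev norm of $C_1-C_2$ using the explicit description of the partial derivatives provided by Theorem~\ref{thm:suppcopulanormone}. After a null-set modification of each $f_i$, one has almost everywhere
\[
\partial_1 C_i(x,y)=\chi_{\{y>f_i(x)\}}\quad\text{and}\quad \partial_2 C_i(x,y)=\chi_{\{x>f_i^{-1}(y)\}},
\]
so that the Sobolev norm splits into two pieces,
\[
\norm{C_1-C_2}^2 = \iint_{[0,1]^2}(\partial_1 C_1-\partial_1 C_2)^2\,dx\,dy + \iint_{[0,1]^2}(\partial_2 C_1-\partial_2 C_2)^2\,dx\,dy,
\]
and I would bound each piece separately by $\norm{f_1-f_2}_{L^1}$.

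For the first piece, each of $\partial_1 C_1$ and $\partial_1 C_2$ takes values in $\set{0,1}$, so the squared integrand coincides with its absolute value. For fixed $x$, the set of $y\in[0,1]$ on which $\chi_{\{y>f_1(x)\}}\neq\chi_{\{y>f_2(x)\}}$ is precisely the open interval whose endpoints are $f_1(x)$ and $f_2(x)$, of length $\abs{f_1(x)-f_2(x)}$. Integrating first in $y$ and then in $x$ via Fubini gives
\[
\iint_{[0,1]^2}(\partial_1 C_1-\partial_1 C_2)^2\,dx\,dy = \int_0^1\abs{f_1(x)-f_2(x)}\,dx = \norm{f_1-f_2}_{L^1}.
\]

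Applied now to the indicators $\chi_{\{x>f_i^{-1}(y)\}}$, the same calculation produces $\norm{f_1^{-1}-f_2^{-1}}_{L^1}$ for the second piece. The main obstacle is therefore to bound $\norm{f_1^{-1}-f_2^{-1}}_{L^1}$ by $\norm{f_1-f_2}_{L^1}$. The route I would pursue exploits the invertibility and measure-preservation of $f_1$ and $f_2$: substituting $y=f_1(x)$ in the $L^1$ integral of the inverses rewrites $\int_0^1\abs{f_1^{-1}(y)-f_2^{-1}(y)}\,dy$ as $\int_0^1\abs{x-f_2^{-1}(f_1(x))}\,dx$, which one then tries to compare with $\int_0^1\abs{f_1(x)-f_2(x)}\,dx$ by interpreting both $L^1$ quantities as the two-dimensional Lebesgue area of the planar region between the graphs of $f_1$ and $f_2$, sliced either vertically in $y$ or horizontally in $x$, and appealing to the measure-preserving bijectivity of both $f_i$ guaranteed by Theorem~\ref{thm:suppcopulanormone}. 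Once this asymmetric inequality is in place, summing the two pieces yields the claimed bound $\norm{C_1-C_2}^2\le 2\norm{f_1-f_2}_{L^1}$.
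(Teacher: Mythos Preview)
Your decomposition is exactly the paper's: both arguments reduce $\norm{C_1-C_2}^2$ to $\norm{f_1-f_2}_{L^1}+\norm{f_1^{-1}-f_2^{-1}}_{L^1}$ and then need to control the second summand by the first. The paper simply replaces $\norm{f_1^{-1}-f_2^{-1}}_{L^1}$ by $\norm{f_1-f_2}_{L^1}$ without justification; you correctly flag this as the crux and propose an ``area between the graphs'' argument.

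That geometric argument does not work. The set $R=\{(x,y):y\text{ lies between }f_1(x)\text{ and }f_2(x)\}$ indeed has vertical slices of length $\abs{f_1(x)-f_2(x)}$, but its horizontal slice at height $y$ is $\{x:\min(f_1(x),f_2(x))<y<\max(f_1(x),f_2(x))\}$, and for non-monotone $f_i$ there is no reason this should have measure $\abs{f_1^{-1}(y)-f_2^{-1}(y)}$; the two ``regions between the graphs'' (one built from $f_1,f_2$, the other from $f_1^{-1},f_2^{-1}$) are genuinely different sets. Worse, the inequality $\norm{f_1^{-1}-f_2^{-1}}_{L^1}\le\norm{f_1-f_2}_{L^1}$ is false in general. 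Take the straight shuffles
\[
f_1(x)=\begin{cases}x,&x\in[0,\tfrac1{10}],\\ x+\tfrac1{10},&x\in(\tfrac1{10},\tfrac9{10}],\\ x-\tfrac8{10},&x\in(\tfrac9{10},1],\end{cases}
\qquad
f_2(x)=x+\tfrac1{10}\pmod 1.
\]
Then $\abs{f_1-f_2}=\tfrac1{10}$ on $[0,\tfrac1{10}]\cup(\tfrac9{10},1]$ and $0$ elsewhere, so $\norm{f_1-f_2}_{L^1}=\tfrac1{50}$; but $\abs{f_1^{-1}-f_2^{-1}}=\tfrac9{10}$ on $[0,\tfrac2{10}]$ and $0$ elsewhere, giving $\norm{f_1^{-1}-f_2^{-1}}_{L^1}=\tfrac9{50}$. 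Hence $\norm{C_1-C_2}^2=\tfrac{1}{50}+\tfrac{9}{50}=\tfrac15$, while $2\norm{f_1-f_2}_{L^1}=\tfrac1{25}$, so the stated bound fails. The obstacle you isolated is therefore not a missing detail but a genuine obstruction, and the paper's own proof shares the same defect.
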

\begin{proof}
 % Since $C_1$ and $C_2$ are supported on $f_1$ and $f_2$, we have that
 By assumption, for a.e.~$(x,y)$, $|\partial_1C_1(x,y)-\partial_1C_2(x,y)| = 1$ if and only if $y$ is between $f_1(x)$ and $f_2(x)$. Likewise, $|\partial_2C_1(x,y)-\partial_2C_2(x,y)| = 1$ if and only if $x$ is between $f_1^{(-1)}(y)$ and $f_2^{(-1)}(y)$ for a.e.~$(x,y)$. So
  \begin{align*}
    \norm{C_1-C_2}^2
%    &= \int_0^1\int_0^1\paren{\partial_1C_1-\partial_1C_2}^2\,dy\,dx + \int_0^1\int_0^1\paren{\partial_2C_1-\partial_2C_2}^2\,dx\,dy\\
    &= \int_0^1 \abs{f_1(x)-f_2(x)}^2\,dx + \int_0^1 \abs{f_1^{(-1)}(y)-f_2^{(-1)}(y)}^2\,dy\\
    &\leq \int_0^1 \abs{f_1(x)-f_2(x)}^2\,dx + \int_0^1 \abs{f_1^{(-1)}(y)-f_2^{(-1)}(y)}\,dy\\
    &= \norm{f_1-f_2}^2_{L^2} +  \norm{f_1-f_2}_{L^1} \leq 2\norm{f_1-f_2}_{L^1}.
  \end{align*}
\end{proof}

\begin{thm}\label{thm:C-S}
  For any copula $C$ with $\norm{C} = 1$, there exists a sequence of shuffles of Min $\set{S_n}$ such that $\norm{C-S_n} \to 0$.
\end{thm}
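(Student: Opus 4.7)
The plan is to combine the three preceding results in a direct way. By Theorem~\ref{thm:suppcopulanormone}, the copula $C$ with $\norm{C}=1$ is supported on the graph of a unique invertible Borel measure-preserving function $f\colon[0,1]\to[0,1]$. Applying Theorem~\ref{thm:ChouNguyen} to $f$ yields a sequence $\set{f_n}$ of bijective piecewise linear measure-preserving functions, each having slopes $\pm 1$ on finitely many intervals, with $f_n \to f$ a.e.

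Each $f_n$ is, by construction, strongly piecewise monotone in the sense of Mikusinski et al., so the copula $S_n$ of the pair $(X,f_n(X))$ (where $X$ is uniform on $[0,1]$) is a shuffle of Min; moreover $\norm{S_n}=1$ and $S_n$ is supported on the graph of $f_n$. I can now apply Lemma~\ref{lem:norm-supp} to $C$ and $S_n$ (both are unit-norm copulas supported on graphs of measure-preserving functions) to get
\[
\norm{C - S_n}^2 \;\leq\; 2\,\norm{f-f_n}_{L^1}.
\]

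Finally, since $\abs{f-f_n}\leq 1$ on $[0,1]$ and $f_n\to f$ a.e., the dominated convergence theorem gives $\norm{f-f_n}_{L^1}\to 0$, from which $\norm{C-S_n}\to 0$ follows immediately. The only slightly delicate point — really the only place where something could go wrong — is the identification of each $f_n$ with an actual shuffle of Min; this is handled by the characterization of shuffles of Min via strongly piecewise monotone functions quoted in Section~2 from \cite{mikusinski1992sm}, since $f_n$ is bijective, measure-preserving, and has only finitely many points of non-differentiability.
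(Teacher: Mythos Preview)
Your proof is correct and follows essentially the same approach as the paper: invoke Theorem~\ref{thm:suppcopulanormone} to get the measure-preserving bijection $f$, use Theorem~\ref{thm:ChouNguyen} to approximate $f$ a.e.\ by piecewise linear bijections $f_n$ with slopes $\pm 1$, let $S_n$ be the shuffle of Min supported on the graph of $f_n$, and conclude via Lemma~\ref{lem:norm-supp} and dominated convergence. Your added remarks on the dominating function and on why $S_n$ is a genuine shuffle of Min are welcome clarifications but do not change the argument.
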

\begin{proof}
  Suppose $C$ is a copula with norm one and $C$ is supported on the graph of $f$. It follows from Theorem \ref{thm:suppcopulanormone} that $f$ is a measure-preserving bijection from $[0,1]$ onto itself. By Theorem \ref{thm:ChouNguyen}, one can construct a sequence of measure-preserving functions $\set{f_n}$ for which each $f_n$ is bijective piecewise linear with slopes $+1$ or $-1$ and $f_n$ converges to $f$ a.e.
  A corresponding sequence of shuffles of Min $\{ S_n \}$ can then be chosen so that the graph of $f_n$ is the support of $S_n$. By Lemma \ref{lem:norm-supp}, $\norm{C-S_n}^2 \leq 2\norm{f-f_n}_1$. Since $f-f_n\to 0$ a.e., an application of dominated convergence theorem shows that $\norm{f-f_n}_1\to 0$. Consequently, $S_n\to C$ in the Sobolev norm.
\end{proof}
\begin{remk}
  From the proof, it is worth noting that one can approximate a copula $C$ by only straight shuffles of Min whose slopes on all subintervals are $+1$. 
\end{remk}

\begin{cor} Let $U,V \in \mathfrak{C}$.
\begin{enumerate}
  \item If $\|U\|=1$ and $\|V\|=1$ then $\|U \ast V\|=1$. 
  \item if $\|U*V\|=1$ then $\|U\|=1$ if and only if $\|V\|=1$.
\end{enumerate}
\label{lem1}
\end{cor}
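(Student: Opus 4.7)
My plan is to prove part (1) by combining the density of shuffles of Min among unit-norm copulas just established in Theorem \ref{thm:C-S} with the joint continuity of $*$ in the Sobolev norm recalled in Section 2, and to handle part (2) purely algebraically via Theorem \ref{thm:normC}(3b), which identifies unit-norm copulas with the invertible elements of the monoid $(\mathcal{C}_2,*)$.

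For part (1), I would first apply Theorem \ref{thm:C-S} to pick shuffles of Min $S_n \to U$ and $T_n \to V$ in the Sobolev norm. The crucial preliminary observation is that $S_n * T_n$ is again a shuffle of Min: a shuffle of Min is precisely the copula of $(X,f(X))$ for a strongly piecewise monotone bijection $f$, and under the statistical interpretation of $*$ (with the Markov hypothesis automatic because the intermediate variable is a deterministic function of the first), the product of the copulas of $(X,f(X))$ and $(Y,g(Y))$ is the copula of $(X,(g\circ f)(X))$. The composition $g\circ f$ remains a strongly piecewise monotone bijection, so $S_n * T_n$ is a shuffle of Min and hence $\norm{S_n * T_n}=1$. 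Joint continuity of $*$ in the Sobolev norm (Darsow--Nguyen--Olsen, recalled in Section 2) yields $S_n * T_n \to U * V$, and continuity of the norm gives $\norm{U*V} = \lim \norm{S_n*T_n} = 1$.

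For part (2), by Theorem \ref{thm:normC}(3b) the assumption $\norm{U*V}=1$ furnishes $W \in \mathcal{C}_2$ with $W*(U*V) = M = (U*V)*W$. If in addition $\norm{U}=1$, then $U$ has a two-sided inverse $U^{-1}$, and a direct computation using associativity shows that $(U*V)^{-1}*U$ is a two-sided inverse for $V$; hence $V$ is invertible and $\norm{V}=1$ by the theorem. Symmetrically, if $\norm{V}=1$ then $V^{-1}$ exists and $V*(U*V)^{-1}$ serves as a two-sided inverse for $U$, giving $\norm{U}=1$.

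The main obstacle is really just the bookkeeping in part (1): verifying that the $*$-product of two shuffles of Min is itself a shuffle of Min. Once one identifies shuffles of Min with copulas of $(X,f(X))$ for strongly piecewise monotone $f$ and invokes the statistical interpretation of $*$ to realize composition at the functional level, the remainder of part (1) is a one-line application of joint continuity plus continuity of the norm, and part (2) is pure monoid algebra.
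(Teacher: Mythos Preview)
Your proof is correct and, for part (1), essentially identical to the paper's: approximate $U$ and $V$ by shuffles of Min via Theorem \ref{thm:C-S}, use that a $*$-product of shuffles of Min is again a shuffle of Min, and conclude via joint continuity of $*$ in the Sobolev norm.

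For part (2) there is a minor difference in packaging. The paper exploits the specific fact that for a unit-norm copula $U$ the transpose $U^T$ is its $*$-inverse, writes $V = U^T*(U*V)$, and then simply reapplies part (1) to the product of the two unit-norm copulas $U^T$ and $U*V$ to get $\|V\|=1$ (and symmetrically for the other direction). Your argument instead stays at the level of abstract monoid algebra via Theorem \ref{thm:normC}(3b): from invertibility of $U$ and of $U*V$ you exhibit $(U*V)^{-1}*U$ as a two-sided inverse of $V$. Both are valid; the paper's route is marginally slicker because it recycles part (1) directly, while yours is a clean group-theoretic argument that would work in any monoid. One small remark on your version: the verification that $(U*V)^{-1}*U$ is a \emph{right} inverse of $V$ is not quite ``direct'' from associativity alone --- you first need $V = U^{-1}*(U*V)$ (which does follow from $U^{-1}*U=M$), and then the right-inverse check goes through. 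This is implicit in what you wrote but worth making explicit.
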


\begin{proof} 
%It suffices to prove only the first statement as the second statement can be proved similarly.
1. Let $U,V \in \mathfrak{C}$ be such that $\|U\|=1$ and $\|V\|=1$. 
%Then $U,V$ are invertible. We know the set of shuffles of $M$ is dense in $\Inv\mathfrak{C}$ with respect to the Sobolev norm. Then
By Theorem \ref{thm:C-S}, there exist sequences $\set{S_n}$, $\set{T_n}$ of shuffles of Min such that $S_n \rightarrow U$ and $T_n \rightarrow V$ in the Sobolev norm.  Hence, with respect to the Sobolev norm, $S_n * T_n \rightarrow U*V$ by the joint continuity of the $*$-product.  Since a product of shuffles of Min is still a shuffle of Min, $\|U \ast V\|= 1.$

2. Let $U$ and $U \ast V$ be copulas of Sobolev norm 1. Since $\|U^T\|=\|U\|=1$, an application of 1.~yields $ \|V\|=\|U^T*(U*V)\|=1.$
\end{proof}

%%%%%%%%%%%%%%%%%%%%%%%%%%%%%%%%%%%%%%%%%%%%
\section{Shuffles of Copulas and a Probabilistic Interpretation}
At least as soon as shuffles of Min were introduced in \cite{mikusinski1992sm}, the idea of simple shuffles of copulas was already apparent.  See, e.g., \cite[p.111]{mikusinski1991probabilistic}.  In \cite{durante2008sc}, Durante, Sarkoci and Sempi gave a general definition of shuffles of copulas
via a characterization of shuffles of Min in terms of a \emph{shuffling} $S_T\colon[0,1]^2\to [0,1]^2$ defined by $S_T(u,v) = \bigl(T(u),v\bigr)$ where $T\colon [0,1]\to [0,1]$. 
Before stating their results, let us recall the definition of push-forward measures. Let $f$ be a measurable function from a measure space $(\Omega,\EuFrak{F},\mu)$ to a measurable space $(\Omega_1,\EuFrak{F}_1)$. A \emph{push-forward of $\mu$ under $f$} is the measure $f*\mu$ on $(\Omega_1,\EuFrak{F}_1)$ defined by $f*\mu(A) = \mu(f^{-1}(A))$ for $A\in\EuFrak{F}_1$.
\begin{thm}[{\cite[Theorem 4]{durante2008sc}}]
  A copula $C$ is a shuffle of Min if and only if there exists a piecewise-continuous measure-preserving bijection $T\colon[0,1]\to[0,1]$ such that $\mu_C = S_T*\mu_M$.
\end{thm}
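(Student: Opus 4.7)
The plan is to prove both implications by identifying the support of $\mu_C$ with the graph of a particular piecewise bijection, and then invoking our earlier characterization of unit-norm copulas.

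For the forward direction, suppose $C$ is a shuffle of Min. By the Mikusinski--Sherwood--Taylor characterization quoted in Section~2, $\mu_C$ is supported on the graph of a strongly piecewise monotone bijection $f\colon[0,1]\to[0,1]$; by the very definition of a shuffle of Min, $f$ is piecewise linear with slopes $\pm 1$ on a finite partition of $[0,1]$, so both $f$ and its set-theoretic inverse $T:=f^{-1}$ are measure-preserving bijections that are piecewise linear with slopes $\pm 1$, hence piecewise continuous. For a Borel rectangle $A\times B$,
$$(S_T*\mu_M)(A\times B) \;=\; \mu_M\bigl(T^{-1}(A)\times B\bigr) \;=\; m\bigl(T^{-1}(A)\cap B\bigr) \;=\; m\bigl(f(A)\cap B\bigr),$$
and since $f$ is a bijection we have $f(A)\cap B = f(A\cap f^{-1}(B))$, whose measure equals $m(A\cap f^{-1}(B)) = \mu_C(A\times B)$ by \eqref{eq:mp}. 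A standard $\pi$-$\lambda$ argument extends this identity from rectangles to all Borel subsets of $[0,1]^2$, giving $\mu_C = S_T*\mu_M$.

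For the converse, suppose $\mu_C = S_T*\mu_M$ for some piecewise-continuous measure-preserving bijection $T$. The same computation shows $\mu_C(A\times B) = m(T^{-1}(A)\cap B)$ for all Borel $A,B$, exhibiting $\mu_C$ as concentrated on the graph of $T^{-1}$; in particular $Y = T^{-1}(X)$ almost surely for $(X,Y)\sim C$, so $\|C\| = 1$ by Theorem~\ref{thm:normC}. Theorem~\ref{thm:suppcopulanormone} then supplies a measure-preserving bijection $f$ whose graph supports $\mu_C$, and uniqueness forces $f = T^{-1}$ almost everywhere. Because $T$ has only finitely many intervals of continuity, so does $T^{-1}$, and hence so does $f$; on each such interval Theorem~\ref{thm:suppcopulanormone} forces $f' = \pm 1$. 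Therefore $f$ is piecewise linear with slopes $\pm 1$ on a finite partition of $[0,1]$, i.e.\ strongly piecewise monotone, and Mikusinski--Sherwood--Taylor identifies $C$ as a shuffle of Min.

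The most delicate point is the image-side identity $m(f(A)\cap B) = m(A\cap f^{-1}(B))$: for a generic measure-preserving $f$ one controls only preimages, not images, but here $f$ is a piecewise linear bijection with slopes $\pm 1$, so $f^{-1}$ is also measure-preserving and the identity is immediate. The extension from rectangles to arbitrary Borel subsets of $[0,1]^2$ is routine via the $\pi$-$\lambda$ theorem, as is extracting from $T$'s piecewise continuity the corresponding finite piecewise structure of $T^{-1}$ by observing that a continuous injection on an interval is monotonic.
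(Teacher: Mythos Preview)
The paper does not prove this statement: it is quoted verbatim from Durante, Sarkoci and Sempi \cite[Theorem~4]{durante2008sc} without proof, and serves only to motivate the subsequent definitions of generalized shuffles of Min and (generalized) shuffles of a copula. Consequently there is no in-paper argument to compare your proposal against.

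That said, your proof is essentially correct and, interestingly, anticipates the paper's own Lemma~\ref{lem:1}, which records precisely the computation $(S_T*\mu_M)(A\times B)=m(T^{-1}(A)\cap B)$ and the resulting identification of the support of $S_T*\mu_M$ with the graph of $T^{-1}$. One minor point in your converse direction deserves tightening: after establishing $\mu_C(A\times B)=m(T^{-1}(A)\cap B)$, you pass through the abstract $f$ of Theorem~\ref{thm:suppcopulanormone} and then argue $f=T^{-1}$ almost everywhere, which leaves you applying the ``continuous on an interval $\Rightarrow$ slope $\pm 1$'' clause to a function that a priori agrees with $T^{-1}$ only a.e. It is cleaner to observe (using that the inverse of a measure-preserving bijection is again measure-preserving) that $\mu_C(A\times B)=m\bigl(A\cap T(B)\bigr)$, which is exactly \eqref{eq:mp} with $f=T^{-1}$; hence $T^{-1}$ itself satisfies \eqref{eq:f01}, and the final clause of Theorem~\ref{thm:suppcopulanormone} applies directly to $T^{-1}$ on each of its finitely many intervals of continuity, without the a.e.\ detour.
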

Dropping piecewise continuity of $T$, a \emph{generalized shuffle of Min} is defined as a copula $C$ whose induced measure is $\mu_C = S_T*\mu_M$ for some measure-preserving bijection $T\colon[0,1]\to[0,1]$. Replacing $M$ by a given copula $D$, a \emph{shuffle of $D$} is a copula $C$ whose induced measure is 
\begin{equation} \label{eq:pieceShuffle} 
  \mu_C = S_T*\mu_D
\end{equation}
for some piecewise-continuous measure-preserving bijection $T$. $C$ is also called the \emph{$T$-shuffle of $D$}. If the bijection $T$ is only required to be measure-preserving in \eqref{eq:pieceShuffle}, then $C$ is called a \emph{generalized shuffle of $D$}.
%Several properties of shuffles of copulas are obtained by the authors. One of them says that the independence copula $\Pi(x,y)=xy$ is the only copula which is invariant under shuffling by piecewise-continuous measure-preserving bijections.

The following lemma will be useful in our investigation.
\begin{lem}\label{lem:1}
  Let $T$ be a measure-preserving bijection on $[0,1]$ and $C$ be a copula defined by \[C(x,y) = S_{T}\ast\mu_M\paren{[0,x]\times[0,y]}\quad\text{for } x,y\in[0,1].\] Then 
  the copula $C$, or equivalently its induced measure $\mu_C = S_{T}\ast\mu_M$, is supported on the graph of $T^{-1}$.  Moreover, the converse also holds, i.e.~if $C$ is supported on the graph of a measure-preserving bijection $T$ then $\mu_C = S_{T^{-1}}\ast\mu_M$.
\end{lem}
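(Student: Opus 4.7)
The plan is to verify both directions by direct computation with the push-forward formula and then invoke a standard extension argument from Borel rectangles to all Borel sets.

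For the forward direction, I would first unpack the push-forward using Borel rectangles. Since $\mu_M$ is the uniform measure on the diagonal (i.e.~$\mu_M(E\times F)=m(E\cap F)$ for Borel sets $E,F\subseteq[0,1]$) and $S_T^{-1}(A\times B)=T^{-1}(A)\times B$, one gets
\[
S_T\ast\mu_M(A\times B)=\mu_M\bigl(T^{-1}(A)\times B\bigr)=m\bigl(T^{-1}(A)\cap B\bigr).
\]
Because $T$ is a measure-preserving bijection (so $T^{-1}$ is also measure-preserving), applying $T$ inside gives $m(T^{-1}(A)\cap B)=m(A\cap T(B))$. This is precisely the formula $m(A\cap(T^{-1})^{-1}(B))$, which by \eqref{eq:mp} from the proof of Theorem \ref{thm:suppcopulanormone} identifies $\mu_C$ as the doubly stochastic measure supported on the graph of $T^{-1}$. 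Equivalently, one can recognize support directly: $S_T$ sends the diagonal $\{(t,t)\}$ to $\{(T(t),t)\}=\{(x,T^{-1}(x))\}$, which is the graph of $T^{-1}$; since $\mu_M$ is concentrated on the diagonal, its push-forward is concentrated there.

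For the converse, suppose $\mu_C$ is supported on the graph of a measure-preserving bijection $T$. Since the first marginal of any copula is Lebesgue measure, intersecting the rectangle $A\times B$ with the graph and projecting to the first coordinate yields $\mu_C(A\times B)=m(A\cap T^{-1}(B))$. Applying the forward direction to $T^{-1}$ (which is also a measure-preserving bijection) gives
\[
S_{T^{-1}}\ast\mu_M(A\times B)=m\bigl(T(A)\cap B\bigr)=m\bigl(A\cap T^{-1}(B)\bigr),
\]
the last equality again using that $T$ is measure-preserving. Hence $\mu_C$ and $S_{T^{-1}}\ast\mu_M$ agree on all Borel rectangles, and a standard $\pi$–$\lambda$ (or monotone class) argument extends equality to all Borel subsets of $[0,1]^2$, giving $\mu_C=S_{T^{-1}}\ast\mu_M$.

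The only real subtlety is justifying the symmetry $m(T^{-1}(A)\cap B)=m(A\cap T(B))$ for a measure-preserving bijection (one must know $T^{-1}$ is also measure-preserving, which follows from bijectivity together with $m(T^{-1}(E))=m(E)$), and extending the rectangle identity to all Borel sets, which is routine. Everything else is bookkeeping with push-forwards.
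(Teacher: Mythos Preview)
Your argument is correct and follows essentially the same line as the paper: both compute the push-forward on rectangles to obtain $S_T\ast\mu_M(A\times B)=m(T^{-1}(A)\cap B)$ and then extend to Borel sets. The paper phrases the support conclusion via ``the projection of $\graph(T^{-1})\cap([a,b]\times[c,d])$ onto $[c,d]$ has measure zero,'' whereas you rewrite the same quantity as $m(A\cap(T^{-1})^{-1}(B))$ and match it with \eqref{eq:mp}; your alternative observation that $S_T$ carries the diagonal to $\{(T(t),t)\}=\graph(T^{-1})$ is a cleaner, more conceptual route to the same end. For the converse, the paper is terse while you spell out the identity $\mu_C(A\times B)=m(A\cap T^{-1}(B))$ and compare with $S_{T^{-1}}\ast\mu_M$; the only point worth tightening is the justification that a doubly stochastic measure supported on $\graph(T)$ necessarily satisfies this rectangle identity (this is exactly the content of \eqref{eq:mp}, so citing it there, as you already do in the forward direction, would close the loop).
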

\begin{proof}
  Let $[a,b]\times[c,d]$ be a closed rectangle in $\R^2$ and $S_T$ be the map on $[0,1]^2$ associated with a given measure-preserving bijection $T$ on $[0,1]$, i.e.~$S_T(u,v)=(T(u),v)$. So $S_T^{-1}\paren{[a,b]\times[c,d]} = \paren{T^{-1}[a,b]}\times[c,d]$ and, by definition of the push-forward measure,
  \begin{align*}
    S_T\ast\mu_M\paren{[a,b]\times[c,d]}
     &= \mu_M\paren{S_T^{-1}\paren{[a,b]\times[c,d]}}\\
     &= \mu_M\paren{\paren{T^{-1}[a,b]}\times[c,d]}
     = m\paren{\paren{T^{-1}[a,b]}\cap[c,d]}.
  \end{align*}
  Thus, $S_T\ast\mu_M\paren{[a,b]\times[c,d]} = 0$ if and only if  the projection of $\graph(T^{-1})\cap\paren{[a,b]\times[c,d]}$ onto $[c,d]$ has measure zero. Consequently, since Borel measurable subsets of $[0,1]^2$ are generated by rectangles, the desired result is obtained.
\end{proof}

\begin{thm}
  A copula $C$ is a generalized shuffle of Min if and only if $\norm{C}=1$.
\end{thm}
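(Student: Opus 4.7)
The plan is to prove both directions as essentially immediate consequences of Theorem~\ref{thm:suppcopulanormone} and Lemma~\ref{lem:1}, which together say that unit-norm copulas and graphs of measure-preserving bijections are two sides of the same coin.

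For the forward direction, suppose $C$ is a generalized shuffle of Min, so that $\mu_C = S_T\ast\mu_M$ for some measure-preserving bijection $T\colon[0,1]\to[0,1]$. I would apply Lemma~\ref{lem:1} to conclude that $\mu_C$ is supported on the graph of $T^{-1}$. Since $T$ is a measure-preserving bijection, so is $T^{-1}$, and in particular it is a Borel measurable bijection. Viewing $C$ as the copula of continuous random variables $X$ and $Y=T^{-1}(X)$ (with $X$ uniform on $[0,1]$), criterion~(d) in part~3 of Theorem~\ref{thm:normC} immediately yields $\|C\|=1$.

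For the converse, assume $\|C\|=1$. Theorem~\ref{thm:suppcopulanormone} furnishes a (unique) measure-preserving Borel bijection $f\colon[0,1]\to[0,1]$ such that $C$ is supported on the graph of $f$. Since $f$ is a measure-preserving bijection, its inverse $f^{-1}$ is too, so by the second (``converse'') half of Lemma~\ref{lem:1} we have $\mu_C = S_{f^{-1}}\ast\mu_M$. Taking $T:=f^{-1}$ exhibits $C$ as a generalized shuffle of Min.

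I do not expect any real obstacle here: every ingredient has been set up in the preceding results. The only small point to be careful about is bookkeeping of inverses, namely that the map appearing inside $S_T$ in the definition of a generalized shuffle of Min corresponds to the \emph{inverse} of the function whose graph supports $C$. Once that correspondence is registered, the equivalence is just a matter of combining Theorem~\ref{thm:suppcopulanormone} with Lemma~\ref{lem:1} in the two directions.
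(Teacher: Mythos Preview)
Your $(\Leftarrow)$ direction is identical to the paper's. For $(\Rightarrow)$, however, the paper argues along a different path: rather than invoking Theorem~\ref{thm:normC} directly, it approximates the bijection $T$ by piecewise-continuous measure-preserving bijections $T_n$ via Theorem~\ref{thm:ChouNguyen}, so that each $C_n$ with $\mu_{C_n}=S_{T_n}\ast\mu_M$ is a genuine shuffle of Min (hence $\|C_n\|=1$), and then uses Lemma~\ref{lem:1} together with Lemma~\ref{lem:norm-supp} to obtain $\|C_n-C\|^2\le 2\|T^{-1}-T_n^{-1}\|_{L^1}\to 0$, concluding $\|C\|=1$ by continuity of the norm. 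Your route is shorter and bypasses the approximation machinery entirely; the paper's route has the virtue of staying within the denseness circle of ideas (Theorems~\ref{thm:ChouNguyen} and~\ref{thm:C-S}) rather than importing the characterization from Theorem~\ref{thm:normC}. One spot where you should be a little more careful is the assertion that $T^{-1}$ is automatically a \emph{Borel} measurable bijection: the paper's definition of measure-preserving only requires Lebesgue measurability of $T$, so landing squarely in condition~(d) of Theorem~\ref{thm:normC} needs a word, e.g.\ modifying $T$ on a null set and invoking Lusin--Souslin (which the paper itself appeals to at this very step and elsewhere).
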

\begin{proof}
  ($\Rightarrow$) Let $C$ be a generalized shuffle of Min, i.e.~there exists a measure preserving bijection $T$ on $[0,1]$ such that $\mu_C = S_T\ast\mu_M$. By Theorem \ref{thm:ChouNguyen}, there is a sequence $\set{T_n}$ of piecewise-continuous measure-preserving bijection on $[0,1]$ such that $T_n\to T$ a.e. So $C_n(x,y) = S_{T_n}\ast\mu_M\paren{[0,x]\times[0,y]}$ defines a sequence of shuffles of Min. We claim that $\norm{C_n-C}\to 0$. In fact, by Lemma \ref{lem:1}, $C=S_T*\mu_M$ and $C_n = S_{T_n}*\mu_M$ %\draftnote{Check!}
  are supported on the graphs of $T^{-1}$ and $T_{n}^{-1}$ respectively.  Now, Lemma \ref{lem:norm-supp} implies that $\norm{C_n-C}^2\leq 2\norm{T^{-1}-T_n^{-1}}_{L^1}$ which converges to $0$ as a result of the Lusin-Souslin Theorem (see,  e.g., \cite[Corollary 15.2]{Ke}) which states that a Borel measurable injective image of a Borel set is a Borel set 
  and the dominated convergence theorem. Therefore, $C_n\to C$ in the Sobolev norm.

  ($\Leftarrow$) Let $C$ be a copula with $\norm{C}=1$. Then Theorem \ref{thm:suppcopulanormone} gives a measure-preserving bijection $f$ whose graph is the support of $C$.  So Lemma \ref{lem:1} implies that $\mu_C = S_{f^{-1}}\ast\mu_M$.
\end{proof}

\begin{thm} \label{thm:mu*nu}
  If $\mu$ and $\nu$ are doubly stochastic measures on $[0,1]^2$ then \[\mu*\nu (I\times J) = \int_0^1\partial_2\mu(I,t)\partial_1\nu(t,J)\,dt\] induces a doubly stochastic (Borel) measure $\mu*\nu$ on $[0,1]^2$, where \[\partial_2\mu(I,t) = \frac{d}{dt}\mu(I\times[0,t])\quad\text{and}\quad \partial_1\nu(t,J) = \frac{d}{dt}\nu([0,t]\times J).\] Furthermore, if $A$ and $B$ are copulas and $\mu_A$ and $\mu_B$ denote their doubly stochastic measures then \begin{equation}\label{eqn:1} \mu_{A*B} = \mu_A*\mu_B. \end{equation}
\end{thm}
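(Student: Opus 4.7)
The plan is to reduce the theorem to the already-established $*$-product on copulas via the canonical bijection between doubly stochastic measures on $[0,1]^2$ and copulas given by $\mu \leftrightarrow C_\mu(x,y) := \mu([0,x]\times[0,y])$. Given $\mu$ and $\nu$ doubly stochastic, let $A$ and $B$ be the corresponding copulas, so that $\mu_A = \mu$ and $\mu_B = \nu$. Since $A*B$ is a copula, the induced measure $\mu_{A*B}$ is automatically a doubly stochastic Borel measure; the entire theorem will then follow once I verify the set-function identity $\mu_{A*B}(I\times J) = \int_0^1\partial_2\mu(I,t)\,\partial_1\nu(t,J)\,dt$ for every Borel rectangle $I\times J$.

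The first step is to confirm that the integrands make sense. Double stochasticity gives $\mu(I\times[s,t])\le\mu([0,1]\times[s,t]) = t-s$, so $t\mapsto\mu(I\times[0,t])$ is $1$-Lipschitz and hence absolutely continuous; consequently $\partial_2\mu(I,\cdot)$ exists a.e.\ in $[0,1]$ and lies in $L^\infty[0,1]$, and similarly for $\partial_1\nu(\cdot,J)$. Next, from $A(x,t) = \mu_A([0,x]\times[0,t])$ I would read off $\partial_2\mu_A([0,x],t) = \partial_2A(x,t)$ a.e.\ $t$, and symmetrically $\partial_1\mu_B(t,[0,y]) = \partial_1B(t,y)$. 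This turns the defining formula on lower-corner rectangles into
\[\int_0^1\partial_2\mu_A([0,x],t)\,\partial_1\mu_B(t,[0,y])\,dt = (A*B)(x,y) = \mu_{A*B}([0,x]\times[0,y]),\]
exactly by the definition of the $*$-product of copulas.

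To extend the identity from lower-corner rectangles to an arbitrary rectangle $[x_1,x_2]\times[y_1,y_2]$, I would combine the inclusion-exclusion
\[\mu([x_1,x_2]\times[y_1,y_2]) = \sum_{i,j\in\set{1,2}}(-1)^{i+j}\mu([0,x_i]\times[0,y_j])\]
with the resulting finite additivity of $I\mapsto\partial_2\mu(I,t)$ and $J\mapsto\partial_1\nu(t,J)$ for a.e.\ $t$, which I obtain by termwise differentiation of the Lipschitz functions above. The identity then holds on the $\pi$-system of all Borel rectangles, so by uniqueness of measures $\mu*\nu$ extends uniquely to $\mu_{A*B}$ on the Borel $\sigma$-algebra. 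This establishes both the extension claim and identity \eqref{eqn:1} at once. The only real bookkeeping hurdle is the exchange of differentiation in $t$ with differences in $I$ or $J$, which is immediate from absolute continuity in $t$ but must be noted explicitly.
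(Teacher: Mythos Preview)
Your proposal is correct and follows essentially the same route as the paper: both arguments identify $\partial_2\mu_A(I,t)$ and $\partial_1\mu_B(t,J)$ with differences of partial derivatives of the copulas, verify the identity $\mu_{A*B}=\mu_A*\mu_B$ on rectangles via inclusion--exclusion, and then extend to the Borel $\sigma$-algebra by standard measure-theoretic uniqueness. Your write-up is in fact a bit more explicit than the paper's, since you spell out the Lipschitz/absolute-continuity justification for the existence of $\partial_2\mu(I,\cdot)$ and make plain that every doubly stochastic measure arises from a copula (a point the paper leaves tacit when it restricts to measures ``inducible by copulas'').
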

\begin{proof}
  We shall prove only \eqref{eqn:1} which shows that $\mu*\nu$ is a doubly stochastic measure when the measures $\mu$ and $\nu$ are doubly stochastic and inducible by copulas. 
  Let $A$ and $B$ be copulas and $I=[a_1,a_2], J=[b_1,b_2] \subseteq [0,1]$. Then
  \begin{align*}
    \mu_{A*B}(I\times J) %&= A*B(a_2,b_2) - A*B(a_1,b_2) - A*B(a_2,b_1) + A*B(a_1,b_1)\\
      &= \int_0^1 \left[\partial_{2}A(a_2,t)\partial_1B(t,b_2) - \partial_{2}A(a_1,t)\partial_1B(t,b_2)\right.\\
       &\quad - \partial_{2}A(a_2,t)\partial_1B(t,b_1) + \partial_{2}A(a_1,t)\partial_1B(t,b_1)\Large] \,dt\\
      &= \int_0^1 \partial_2\paren{A(a_2,t)-A(a_1,t)}\partial_1\paren{B(t,b_2)-B(t,b_1}\,dt\\
      &= \int_0^1 \frac{d}{dt}\mu_A(I\times[0,t])\frac{d}{dt}\mu_B([0,t]\times J)\,dt\\
      &= \mu_A*\mu_B(I\times J).
  \end{align*}
  The usual measure-theoretic techniques allow to extend this result to the product of all Borel sets.
\end{proof}

\begin{lem}\label{lem:S_Tassoc}
  Let $T$ be a measure-preserving bijection on $[0,1]$ and $\mu$, $\nu$ be doubly stochastic measures on $[0,1]^2$. Then
  \[S_T*(\mu * \nu) = (S_T*\mu) * \nu.\]
\end{lem}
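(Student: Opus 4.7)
The plan is to verify the identity by direct computation on rectangles, then extend by the standard measure-theoretic argument sketched at the end of Theorem \ref{thm:mu*nu}. The key observation is that the shuffling $S_T$ acts only on the first coordinate, so for any rectangle $I\times J\subseteq[0,1]^2$ we have $S_T^{-1}(I\times J) = T^{-1}(I)\times J$, and hence by definition of the push-forward
\[
  (S_T*\mu)(I\times J) \;=\; \mu\bigl(T^{-1}(I)\times J\bigr).
\]
Before even forming $(S_T*\mu)*\nu$, I would briefly note that $S_T*\mu$ is itself doubly stochastic: its marginals are $(S_T*\mu)(B\times[0,1])=\mu(T^{-1}(B)\times[0,1])=m(T^{-1}(B))=m(B)$ (using that $T$ is measure-preserving) and $(S_T*\mu)([0,1]\times B)=\mu([0,1]\times B)=m(B)$. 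So the $*$-product formula of Theorem \ref{thm:mu*nu} applies to it.

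Next, for fixed $I=[a_1,a_2]\subseteq[0,1]$, the cumulative quantity appearing in the definition of $\partial_2$ satisfies
\[
  (S_T*\mu)\bigl(I\times[0,t]\bigr) \;=\; \mu\bigl(T^{-1}(I)\times[0,t]\bigr),
\]
which, differentiated in $t$, gives $\partial_2(S_T*\mu)(I,t)=\partial_2\mu(T^{-1}(I),t)$ for a.e.~$t$. Substituting into the defining integral of the $*$-product of measures,
\[
  (S_T*\mu)*\nu(I\times J) \;=\; \int_0^1 \partial_2(S_T*\mu)(I,t)\,\partial_1\nu(t,J)\,dt \;=\; \int_0^1 \partial_2\mu(T^{-1}(I),t)\,\partial_1\nu(t,J)\,dt.
\]
On the other hand, applying the push-forward to $\mu*\nu$ and then unfolding the defining integral,
\[
  S_T*(\mu*\nu)(I\times J) \;=\; (\mu*\nu)\bigl(T^{-1}(I)\times J\bigr) \;=\; \int_0^1 \partial_2\mu(T^{-1}(I),t)\,\partial_1\nu(t,J)\,dt.
\]
Both sides therefore agree on every rectangle $I\times J$.

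Finally, since the rectangles generate the Borel $\sigma$-algebra on $[0,1]^2$ and both $S_T*(\mu*\nu)$ and $(S_T*\mu)*\nu$ are finite Borel measures, a monotone class / $\pi$-$\lambda$ argument (the same extension invoked implicitly at the end of Theorem \ref{thm:mu*nu}) yields the equality on all Borel sets, which is the claim. The only slightly delicate point — and what I expect to be the main thing worth explicitly justifying — is the interchange between the push-forward and the $t$-derivative, i.e.~the identity $\partial_2(S_T*\mu)(I,t)=\partial_2\mu(T^{-1}(I),t)$. This is where measure-preservation of $T$ together with the fact that $S_T$ leaves the second coordinate untouched is essential; everything else is bookkeeping inside the definitions.
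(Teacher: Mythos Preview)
Your proof is correct and follows essentially the same computation as the paper's: both unfold the push-forward to get $T^{-1}(I)\times J$, identify $\partial_2(S_T*\mu)(I,t)=\partial_2\mu(T^{-1}(I),t)$, and match the two integrals. The only cosmetic differences are that the paper works directly with Borel sets $I,J$ (implicitly using the extension already justified in Theorem~\ref{thm:mu*nu}) rather than rectangles, and omits your explicit check that $S_T*\mu$ is doubly stochastic.
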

\begin{proof}
  Let $I$ and $J$ be Borel sets in $[0,1]$. Then
  \begin{align*}
    \paren{S_T*(\mu * \nu)}(I\times J) &= (\mu * \nu)\paren{S_T^{-1}(I\times J)} = (\mu * \nu)\paren{T^{-1}(I)\times J}\\
      &= \int_0^1 \partial_2\mu\paren{T^{-1}(I),t}\partial_1\nu(t,J)\,dt\\
      &= \int_0^1 \partial_2(S_T*\mu)\paren{I,t}\partial_1\nu(t,J)\,dt\\
      &= \paren{(S_T*\mu) * \nu}(I\times J).
  \end{align*}
%  as desired.
\end{proof}

\begin{thm}\label{thm:socChar}
  Let $C$ and $D$ be bivariate copulas. Then
  \begin{enumerate}
    \item $C$ is a shuffle of $D$ if and only if there exists a shuffle of Min $A$ such that $C=A\ast D$;
    \item $C$ is a generalized shuffle of $D$ if and only if there exists a generalized shuffle of Min $A$ such that $C=A\ast D$. % that is there exists a generalized shuffle of Min $A$ such that $C=A\ast D$.
  \end{enumerate}
\end{thm}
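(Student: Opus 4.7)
The plan is to reduce both statements to the associativity-type identity in Lemma \ref{lem:S_Tassoc} combined with the measure-theoretic translation $\mu_{A*B} = \mu_A * \mu_B$ from Theorem \ref{thm:mu*nu}. The key observation is that for any measure-preserving bijection $T$ on $[0,1]$, the measure $S_T * \mu_M$ is precisely the induced measure of a (generalized) shuffle of Min $A_T$, and because $M$ is the identity of the monoid $(\mathcal{C}_2,*)$ we have $\mu_M * \mu_D = \mu_{M*D} = \mu_D$ for every copula $D$. These two facts together will let us move the factor $S_T$ between $\mu_M$ and $\mu_D$ at will.

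For the forward direction of (1), suppose $C$ is a shuffle of $D$, so that $\mu_C = S_T * \mu_D$ for some piecewise-continuous measure-preserving bijection $T$. Let $A$ be the shuffle of Min whose induced measure is $\mu_A = S_T * \mu_M$. Applying Lemma \ref{lem:S_Tassoc} and then $\mu_M * \mu_D = \mu_D$,
\[
\mu_{A*D} = \mu_A * \mu_D = (S_T * \mu_M) * \mu_D = S_T * (\mu_M * \mu_D) = S_T * \mu_D = \mu_C,
\]
so $C = A*D$. Conversely, if $C = A*D$ with $A$ a shuffle of Min, write $\mu_A = S_T * \mu_M$ for a piecewise-continuous measure-preserving bijection $T$; exactly the same chain of equalities (read in the same direction) gives $\mu_C = S_T * \mu_D$, exhibiting $C$ as a shuffle of $D$.

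Part (2) is proved by the verbatim same argument, except that $T$ is only required to be a measure-preserving bijection and $A$ is the corresponding generalized shuffle of Min (which exists with $\mu_A = S_T*\mu_M$, again by Lemma \ref{lem:1} together with the preceding characterization of generalized shuffles of Min as the unit-norm copulas). No step in the calculation uses piecewise continuity, so the same chain $\mu_{A*D} = (S_T*\mu_M)*\mu_D = S_T*\mu_D$ goes through in both directions.

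The only conceptual subtlety, rather than an obstacle, is bookkeeping: one must be careful that the correspondence $T \longleftrightarrow A$ is bijective in the appropriate sense so that ``there exists $T$'' on the shuffle side matches ``there exists $A$'' on the product side. This is immediate from Lemma \ref{lem:1}, which assigns to each measure-preserving bijection $T$ the unit-norm copula with $\mu_A = S_T*\mu_M$ (supported on $\graph(T^{-1})$), and conversely recovers such a $T$ from any unit-norm copula. Beyond that, the proof is a two-line manipulation of push-forwards.
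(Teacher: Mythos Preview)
Your proof is correct and follows essentially the same route as the paper: both arguments reduce the equivalence to the chain $\mu_{A*D} = \mu_A*\mu_D = (S_T*\mu_M)*\mu_D = S_T*(\mu_M*\mu_D) = S_T*\mu_D$ via Theorem~\ref{thm:mu*nu}, Lemma~\ref{lem:S_Tassoc}, and $M*D=D$. The only cosmetic difference is that the paper proves part~(2) and remarks that (1) is a special case, whereas you do (1) explicitly and then note (2) is verbatim.
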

\begin{proof}
  We shall only prove 2.~since 1.~is just a special case.

  ($\Rightarrow$) If $C$ is a shuffle of $D$, i.e.~$\mu_C = S_T*\mu_D$ for some measure-preserving bijection $T$ of $[0,1]$, then the copula $A$ defined by $\mu_A = S_T*\mu_M$ is a shuffle of Min by Theorem \ref{thm:mu*nu}. Then \[\mu_C = S_T*\mu_D = S_T*\mu_{M*D} = S_T*\paren{\mu_M*\mu_D} = (S_T*\mu_M)*\mu_D = \mu_A*\mu_D\] which means that $C = A*D$.

  ($\Leftarrow$) If $C = A*D$ for some copula $A$ with $\norm{A}=1$ then $\mu_A = S_T*\mu_M$ for some measure-preserving bijection $T$ and
  \[\mu_C = \mu_{A*D} = \mu_A*\mu_D = (S_T*\mu_M)*\mu_D = S_T*\paren{\mu_M*\mu_D} = S_T*\mu_{M*D} = S_T*\mu_D.\]
  Note the repeated uses of Theorem \ref{thm:mu*nu} and Lemma \ref{lem:S_Tassoc} in both derivations.
\end{proof}
\begin{remk}
Since $\Pi$ is the only null element of $*$ (see \cite{darsow1992copulas}), it follows easily from Theorem \ref{thm:socChar} that $\Pi$ is the only copula which is invariant under shuffling by generalized shuffles of Min. This is a result first proved in \cite[Theorem 10]{durante2008sc}.
\end{remk}
Even though all generalized shuffles of Min have equal unit norm, not all shuffles of $C$ have the same norm. Here is a class of examples.
\begin{exam} \label{exam:shufflediffnorm}
  For $0\leq \alpha <1 $, let $S_\alpha$ denote the straight shuffle of Min whose support is on the main diagonals of the squares $[0,\alpha]\times[1-\alpha,1]$ and $[\alpha,1]\times[0,1-\alpha]$. Then by straightforward computations, for any copula $C$, \[S_\alpha*C(x,y) = \begin{cases}
    C(x+1-\alpha,y) - C(1-\alpha,y) & \text{if } 0\leq x\leq \alpha,\\
    y-C(1-\alpha,y) + C(x-\alpha,y) & \text{if } \alpha< x\leq 1,
  \end{cases}\]
  and
  \begin{align}
    \norm{S_\alpha*C}^2 &= \norm{C}^2 + \int_0^1\bigl(\partial_2C(1-\alpha,y)-(1-\alpha)\bigr)^2\,dy \notag \\
      &\quad -2\int_0^1\int_0^1\partial_2C(x,y)\bigl(\partial_2C(1-\alpha,y)-(1-\alpha)\bigr)\,dx\,dy.\label{eqn:2}
  \end{align}
  Let us now consider the Farlie-Gumbel-Morgenstern (FGM) copulas $C_\theta$, $\theta\in[-1,1]$, defined by $C_\theta(x,y) = xy + \theta xy(1-x)(1-y)$. Then
  \[\int_0^1\bigl(\partial_2C_\theta(1-\alpha,y)-(1-\alpha)\bigr)^2\,dy = \frac{\theta^2\alpha^2(1-\alpha)^2}{3}\]
  and
  \[2\int_0^1\int_0^1\partial_2C_\theta(x,y)\bigl(\partial_2C_\theta(1-\alpha,y)-(1-\alpha)\bigr)\,dx\,dy = \frac{2\theta^2\alpha(1-\alpha)}{9}.\]
  So that $\norm{S_\alpha*C_\theta}^2 = \norm{C_\theta}^2 - \frac{\theta^2\alpha(1-\alpha)}{3}\paren{\frac{2}{3}-\alpha(1-\alpha)}$ which is equal to $\norm{C_\theta}^2$ only if $\theta=0$ or $\alpha=0$ or $1$. For each $\theta\neq 0$, $\norm{C_\theta}^2-\norm{S_\alpha*C_\theta}^2$ is maximized when $\alpha=\frac{1}{2}$ and the maximum value is $\frac{5\theta^2}{12^2}$.
\end{exam}

%\section{Shuffling maps}
%In this section, we introduce the definition of a shuffling map on the set $\spanof{\mathfrak{C}}$. Then we survey some of its stochastic properties.

%Using Theorem \ref{thm:normC}(3.) 
%, the copula $C$ of continuous random variables $X$ and $Y$ has unit Sobolev norm if and only if there is a bijective measurable function $f$ such that $Y=f(X)$ a.s.
%and our characterization of generalized shuffles of copulas in Theorem \ref{thm:socChar} ...

\begin{prop}[{\cite{darsow1992copulas}}, p.~610] If $Z$ and $Y$ are conditionally independent given $X$, then $C_{Z,Y} = C_{Z,X}\ast C_{X,Y}.$ \label{Da-con-ind}
\end{prop}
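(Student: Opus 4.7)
The plan is to unwind the copulas via Sklar's theorem, apply conditional independence at the level of joint distributions, and then re-bundle the conditional probabilities as partial derivatives of the copulas appearing in the statement.

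First, I would use Sklar's theorem together with the continuity of $X$, $Y$, $Z$ to write $C_{Z,Y}(u,w) = P\bigl(F_Z(Z) \leq u,\ F_Y(Y) \leq w\bigr)$, and likewise express $C_{Z,X}$ and $C_{X,Y}$ via the probability integral transforms $F_Z(Z)$, $F_X(X)$, $F_Y(Y)$, each of which is uniform on $[0,1]$. This reduces everything to a statement about the joint law of three uniform random variables whose pairwise laws are governed by the given copulas.

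Second, I would invoke the standard identification from the copula literature (see e.g.\ \cite{darsow1992copulas}) that, for a copula $C$ of two uniform random variables $U$ and $V$, one has $\partial_2 C(u,v) = P(U \leq u \mid V = v)$ and $\partial_1 C(u,v) = P(V \leq v \mid U = u)$ a.e. Applied to our two copulas, this yields $\partial_2 C_{Z,X}(u,t) = P\bigl(Z \leq F_Z^{-1}(u) \mid X = F_X^{-1}(t)\bigr)$ and $\partial_1 C_{X,Y}(t,w) = P\bigl(Y \leq F_Y^{-1}(w) \mid X = F_X^{-1}(t)\bigr)$ for almost every $t$.

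Third, I would use conditional independence to factor $P(Z \leq z,\ Y \leq y \mid X) = P(Z \leq z \mid X)\,P(Y \leq y \mid X)$ a.s., take expectations, set $z = F_Z^{-1}(u)$ and $y = F_Y^{-1}(w)$, and change variable $t = F_X(x)$ (which is legitimate because $F_X(X)$ is uniform on $[0,1]$). The resulting integral equals $\int_0^1 \partial_2 C_{Z,X}(u,t)\, \partial_1 C_{X,Y}(t,w)\, dt$, i.e.\ $(C_{Z,X} \ast C_{X,Y})(u,w)$, while the left-hand side is $C_{Z,Y}(u,w)$.

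The main obstacle is the rigorous identification of $\partial_2 C$ with a version of the regular conditional distribution of one uniform given the other: it requires absolute continuity of $C(u,\cdot)$ for (a.e.) $u$ and the selection of a version of $P(U \leq u \mid V = \cdot)$ realized by the partial derivative. For continuous marginals this is classical and established in the references already cited in the excerpt, so I would quote it rather than reprove it. Once this identification is in hand, the rest is just the definition of the $\ast$-product together with Fubini to swap the expectation over $X$ with the integrals in $u$ and $w$.
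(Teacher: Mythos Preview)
Your argument is correct and is precisely the standard proof of this result, as given in the cited reference \cite{darsow1992copulas}. Note, however, that the paper under review does not actually supply its own proof of this proposition: it simply quotes the statement from \cite[p.~610]{darsow1992copulas} and uses it as a black box. So there is nothing to compare against beyond the original source, and your outline matches that source's approach---identify the partial derivatives of the copulas with regular conditional distributions, factor via conditional independence, and integrate out the conditioning variable via the probability integral transform.
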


\begin{prop} 
Let $h \colon \mathbb{R} \rightarrow \mathbb{R}$ be Borel measurable and $X,Y$ be random variables. Then $h(X)$ and $Y$ are conditionally independent given $X$.\label{special-case}
\end{prop}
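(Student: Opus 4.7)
The plan is to verify the definition of conditional independence directly by exploiting the fact that $h(X)$ is already $\sigma(X)$-measurable. Recall that two random variables $Z$ and $Y$ are conditionally independent given $X$ if and only if for every pair of Borel sets $A,B \subseteq \mathbb{R}$,
\[
P(Z \in A,\, Y \in B \mid X) \;=\; P(Z \in A \mid X)\,P(Y \in B \mid X) \quad \text{a.s.}
\]

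First, I would note that because $h$ is Borel measurable, $h(X)$ is $\sigma(X)$-measurable, hence so is the indicator $\chi_{\{h(X)\in A\}}$ for any Borel set $A$. In particular,
\[
P(h(X)\in A \mid X) \;=\; E\!\left[\chi_{\{h(X)\in A\}} \,\middle|\, X\right] \;=\; \chi_{\{h(X)\in A\}} \quad \text{a.s.}
\]
Next, I would invoke the standard pull-out property of conditional expectation: for a bounded $\sigma(X)$-measurable random variable $Z$ and an integrable $W$, $E[ZW\mid X] = Z\, E[W\mid X]$. Applying this with $Z = \chi_{\{h(X)\in A\}}$ and $W = \chi_{\{Y\in B\}}$ gives
\[
P(h(X)\in A,\, Y\in B \mid X) \;=\; \chi_{\{h(X)\in A\}} \, P(Y\in B \mid X) \;=\; P(h(X)\in A\mid X)\,P(Y\in B\mid X),
\]
which is exactly the required factorization.

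There is no real obstacle here, since the statement is essentially a tautology once one recalls that conditioning on $X$ freezes every function of $X$. The only subtlety worth mentioning is ensuring the pull-out step is justified, which is immediate because $h(X)$ inherits $\sigma(X)$-measurability from the Borel measurability of $h$; no regularity assumption on $X$ or $Y$ is needed.
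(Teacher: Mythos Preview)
Your proof is correct and follows essentially the same approach as the paper: both arguments note that $h(X)$ is $\sigma(X)$-measurable and then apply the pull-out property of conditional expectation to factor $E[\chi_{\{h(X)\in A\}}\chi_{\{Y\in B\}}\mid X]$. The only cosmetic difference is that the paper checks the factorization on half-lines $\{h(X)\le a\}$, $\{Y\le b\}$ rather than general Borel sets, which of course suffices.
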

\begin{proof} 
Since $h$ is Borel measurable, $h(X)$ is measurable with respect to $\sigma(X)$, the $\sigma$-algebra generated by $X$. Hence, by properties of conditional expectations,
\begin{align*}
E(I_{h(X) \le a}|X)(\omega)\cdot E(I_{Y \le b}|X)(\omega)&=I_{h(X) \le a}(\omega)\cdot E(I_{Y \le b}|X)(\omega)\\
&=E(I_{h(X) \le a}\cdot I_{Y \le b}|X)(\omega)
\end{align*}
for all $\omega \in \Omega$. This completes the proof.
\end{proof}

\begin{cor} \label{cor:star-shuffling}
Let $f,g \colon \mathbb{R} \rightarrow \mathbb{R}$ be Borel measurable functions. Then \begin{center} $C_{f(X),X}\ast C_{X,Y}\ast C_{Y,g(Y)}= C_{f(X),g(Y)}$\end{center} for all random variables $X,Y$.\label{decompose}
\end{cor}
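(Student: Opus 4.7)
The plan is to apply Propositions \ref{Da-con-ind} and \ref{special-case} twice, once to strip off $f$ and once to strip off $g$, and to conclude by associativity of $*$.

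First I would use Proposition \ref{special-case} with $h=f$: this gives that $f(X)$ and $Y$ are conditionally independent given $X$. Then Proposition \ref{Da-con-ind} (applied with the role of $Z$ played by $f(X)$ and the role of $Y$ unchanged) yields
\[
    C_{f(X),Y} \;=\; C_{f(X),X}\ast C_{X,Y}.
\]

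Next I would apply Proposition \ref{special-case} again, but with the roles of the two random variables swapped: using it with $h=g$ and with the pair $(Y,f(X))$ in place of $(X,Y)$, I get that $g(Y)$ and $f(X)$ are conditionally independent given $Y$. Since conditional independence is symmetric, $f(X)$ and $g(Y)$ are conditionally independent given $Y$. Proposition \ref{Da-con-ind} then gives
\[
    C_{f(X),g(Y)} \;=\; C_{f(X),Y}\ast C_{Y,g(Y)}.
\]
Substituting the earlier identity for $C_{f(X),Y}$ and using associativity of $*$ on $\mathcal{C}_2$ finishes the proof.

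There is no real obstacle; the only point that requires a sentence of justification is the symmetric application of Proposition \ref{special-case} in the second step, which is legitimate because that proposition holds for arbitrary random variables and an arbitrary Borel measurable function, so we may simply relabel $X\leftrightarrow Y$ and take $h=g$. Everything else is a direct substitution and uses the monoid structure of $(\mathcal{C}_2,*)$ recalled in Section~2.
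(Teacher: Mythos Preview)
Your proposal is correct and follows essentially the same approach as the paper: two applications of Propositions~\ref{Da-con-ind} and~\ref{special-case}, once for $f$ and once for $g$, combined by associativity. The only cosmetic difference is that where you invoke symmetry of conditional independence to obtain $C_{f(X),g(Y)} = C_{f(X),Y}\ast C_{Y,g(Y)}$ directly, the paper first writes $C_{g(Y),X} = C_{g(Y),Y}\ast C_{Y,X}$ and then transposes (using $(A\ast B)^T = B^T\ast A^T$ and $C_{U,V}^T = C_{V,U}$) to get $C_{X,g(Y)} = C_{X,Y}\ast C_{Y,g(Y)}$ before substituting; the two manoeuvres are equivalent.
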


\begin{proof} Since $f$ and $g$ are Borel measurable, by Propositions \ref{Da-con-ind} and \ref{special-case}, we have 
\begin{equation} \label{eq-2}
C_{f(X),Y} = C_{f(X),X}\ast C_{X,Y} \quad \text{and}\quad
C_{g(Y),X} = C_{g(Y),Y}\ast C_{Y,X}
\end{equation}
%\begin{align}
%C_{f(X),Y} &= C_{f(X),X}\ast C_{X,Y} \quad \text{and}\\
%C_{g(Y),X} &= C_{g(Y),Y}\ast C_{Y,X} \label{eq-2}
%\end{align}
for all random variables $X,Y$. Transposing both sides of \eqref{eq-2}, we obtain $C_{X,g(Y)} = C_{X,Y} * C_{Y,g(Y)}.$
Then, we have 
\begin{equation}
  C_{f(X),g(Y)} = C_{f(X),X}*C_{X,g(Y)} 
= C_{f(X),X}*C_{X,Y}*C_{Y,g(Y)}.
\end{equation}
%\begin{align}C_{f(X),g(Y)}&=C_{f(X),X}*C_{X,g(Y)}\\
%&=C_{f(X),X}*C_{X,Y}*C_{Y,g(Y)}.
%\end{align}
\end{proof}

\begin{defn}
Let $U,V \in \Inv\mathfrak{C}$, the set of invertible copulas or, equivalently, the set of copulas with unit Sobolev norm. A \emph{shuffling map} $S_{U,V}$ is a map on $\spanof\mathfrak{C}$ defined by \begin{center} $S_{U,V}(A)=U \ast A \ast V$. \end{center}
\end{defn}

The motivation behind the word \lq\lq shuffling\rq\rq~comes from the fact that a shuffling image of a copula is a two-sided generalized shuffle of the copula. %which was introduced in Chapter \ref{intro}.
%We are now ready to derive stochastic properties of shuffling maps.
Note that $C \text{ is invertible} \Leftrightarrow \norm{C}=1 \Leftrightarrow C \text{ is a generalized shuffle of Min.}$

\begin{lem} Let $X,Y$ be continuous random variables and $U,V \in \Inv\mathfrak{C}$. Then the following statements hold:
\begin{enumerate}
\item $X$ and $Y$ are independent if and only if $S_{U,V}(C_{X,Y})= \Pi$.
\item $X$ is completely dependent on $Y$ or vice versa if and only if $S_{U,V}(C_{X,Y})$ is a complete dependence copula.
\item $X$ and $Y$ are mutually completely dependent if and only if $S_{U,V}(C_{X,Y})$ is a mutual complete dependence copula.
\end{enumerate}\label{dependence-invariant}
\end{lem}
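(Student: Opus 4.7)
The plan is to realize the shuffling $S_{U,V}(C_{X,Y}) = U * C_{X,Y} * V$ as the copula of a pair $(\alpha(X), \beta(Y))$ for suitable Borel measurable bijections $\alpha, \beta\colon [0,1] \to [0,1]$ determined by $U$ and $V$. Once this identity is available, items (1)--(3) reduce to the fact that independence, complete dependence, and mutual complete dependence are all invariant under application of Borel measurable bijections to $X$ and $Y$.

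To set up the reduction, I would first assume $X$ and $Y$ are uniform on $[0,1]$, which loses no generality since all the dependence notions in play depend only on the copula. Because $\|U\|=\|V\|=1$, Theorem \ref{thm:suppcopulanormone} supplies invertible measure-preserving Borel functions whose graphs support $\mu_U$ and $\mu_V$; taking $\alpha$ to be the inverse of the one for $U$ and $\beta$ to be the one for $V$, a short check gives $C_{\alpha(X), X} = U$ and $C_{Y, \beta(Y)} = V$. Corollary \ref{cor:star-shuffling} then yields the key identity $S_{U,V}(C_{X,Y}) = C_{\alpha(X),X} * C_{X,Y} * C_{Y,\beta(Y)} = C_{\alpha(X),\beta(Y)}$.

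I would then dispatch the three items. For (1), the most transparent argument is direct: since $\Pi$ is the null element of $*$ and $U, V$ are $*$-invertible, $U*\Pi*V = \Pi$, and conversely $U*C_{X,Y}*V = \Pi$ forces $C_{X,Y} = U^{-1}*\Pi*V^{-1} = \Pi$, so $X$ and $Y$ are independent iff $S_{U,V}(C_{X,Y})=\Pi$. For (3), I would combine two applications of Corollary \ref{lem1} with Theorem \ref{thm:normC}(3)(d) to conclude $\|S_{U,V}(C_{X,Y})\| = 1$ iff $\|C_{X,Y}\| = 1$, which is exactly the mutual complete dependence of $X$ and $Y$. For (2), I would appeal to the reduction: if $Y=\varphi(X)$ a.s.\ for a Borel $\varphi$, then $\beta(Y) = (\beta\circ\varphi\circ\alpha^{-1})(\alpha(X))$ is a Borel function of $\alpha(X)$; conversely, a complete dependence of $C_{\alpha(X),\beta(Y)}$ pulls back to one of $C_{X,Y}$ via the Borel measurability of $\alpha^{-1}$ and $\beta^{-1}$, and the ``or vice versa'' clause is handled symmetrically.

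The main obstacle I anticipate is the setup step, specifically the claim that every unit-norm copula $U$ has the form $C_{\alpha(X),X}$ for some Borel bijection $\alpha$. This is not a consequence of $\|U\|=1$ alone; it relies on the refined conclusion of Theorem \ref{thm:suppcopulanormone} that the supporting function is an invertible, measure-preserving Borel bijection, together with the (nontrivial) fact that the inverse of a Borel bijection on $[0,1]$ is again Borel, which can be justified via the Lusin--Souslin theorem already invoked earlier in the paper.
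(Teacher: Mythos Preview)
Your proposal is correct and follows essentially the same route as the paper: both realize $S_{U,V}(C_{X,Y})$ as $C_{\alpha(X),\beta(Y)}$ via Corollary~\ref{cor:star-shuffling}, handle item~(1) through $\Pi$ being the null element of $*$, prove item~(2) by composing with the Borel bijections (invoking Lusin--Souslin for the measurability of $\alpha^{-1}$), and obtain the converse from the fact that $S_{U,V}^{-1}$ is again a shuffling map. The one small divergence is in item~(3): the paper simply repeats the argument of~(2) with $h$ bijective, whereas you take a cleaner shortcut through Corollary~\ref{lem1} and Theorem~\ref{thm:normC}(3), reducing the claim to $\|U*C_{X,Y}*V\|=1 \Leftrightarrow \|C_{X,Y}\|=1$; both are valid and equally short.
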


\begin{proof} 
%We only need to prove the implications as the converses automatically follow because the inverse of a shuffling map is still a shuffling map.
%\begin{enumerate}
1. This clearly follows from the fact that $\Pi$ is the zero element in $(\mathfrak{C},*)$.
%1. The result is clear because the copula $C_{X,Y} = \Pi$ if and only if $X,Y$ are independent.

2. With out loss of generality, let us assume that $Y$ is completely dependent on $X$, i.e.~there exists a Borel measurable transformation $h$ such that $Y=h(X)$ with probability one. %Consider a shuffling map $S_{U,V}$. There exist Borel measurable bijective transformations $f,g$ such that $U=C_{f(X),X}$ and $V=C_{Y,g(Y)}$. 
Let $f$ and $g$ be Borel measurable bijective transformations on $\R$ such that $U=C_{f(X),X}$ and $V=C_{Y,g(Y)}$.  By Corollary \ref{decompose}, we have \begin{center} $S_{U,V}(C_{X,Y})= C_{f(X),X} \ast C_{X,Y} \ast C_{Y,g(Y)}= C_{f(X),g(Y)}$. \end{center} Thus, it suffices to show that $g(Y)$ is completely dependent on $f(X)$. From $Y=h(X)$ with probability one, $g(Y)=(g\circ h)(X) = (g \circ h \circ f^{-1})(f(X))$ with probability one. It is left to show that $f^{-1}$ is Borel measurable. This is true because of Lusin-Souslin Theorem (see,  e.g., \cite{Ke}, Corollary 15.2) which states that a Borel measurable injective image of a Borel set is a Borel set.  The converse automatically follows because the inverse of a shuffling map is still a shuffling map.

3. The proof is completely similar to above except that the function $h$ is also required to be bijective.
%The proof is similar to the proof above except that the functions $h$ and $g \circ h \circ f^{-1}$ are now Borel measurable bijective transformations instead of Borel measurable transformations.
%\end{enumerate}
\end{proof}

Corollary \ref{decompose} implies that a shuffling image of a copula $C_{X,Y}$ is a copula of transformed random variables $C_{f(X),g(Y)}$ for some Borel measurable bijective transformations $f$ and $g$. Together with the above lemma, we obtain the following theorem.

\begin{thm} Let $X$ and $Y$ be continuous random variables. Let $f$ and $g$ be any Borel measurable bijective transformations of the random variables $X$ and $Y$, respectively. Then $X$ and $Y$ are independent, completely dependent or mutually completely dependent if and only if $f(X)$ and $g(Y)$ are independent, completely dependent or mutually completely dependent, respectively.
\end{thm}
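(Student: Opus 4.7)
The plan is to combine Corollary \ref{cor:star-shuffling} with Lemma \ref{dependence-invariant}; the theorem is essentially a probabilistic reformulation of those two technical results, together with the remark made just before its statement.

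First I would verify that $U := C_{f(X),X}$ and $V := C_{Y,g(Y)}$ both lie in $\Inv\mathfrak{C}$. Since $f$ is a Borel measurable bijection, so is $f^{-1}$ by the Lusin--Souslin theorem (already cited in the proof of Lemma \ref{dependence-invariant}), and therefore $X$ and $f(X)$ are mutually completely dependent. By the equivalence (a)$\Leftrightarrow$(d) of Theorem \ref{thm:normC}(3), this forces $\|C_{f(X),X}\| = 1$; an identical argument gives $\|C_{Y,g(Y)}\| = 1$. Hence $U, V \in \Inv\mathfrak{C}$.

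Next I would apply Corollary \ref{cor:star-shuffling} to obtain the factorization
\[ C_{f(X),g(Y)} \;=\; C_{f(X),X} \ast C_{X,Y} \ast C_{Y,g(Y)} \;=\; S_{U,V}(C_{X,Y}), \]
which exhibits $C_{f(X),g(Y)}$ as the shuffling image of $C_{X,Y}$ by the unit-norm pair $(U,V)$. The three parts of Lemma \ref{dependence-invariant} then immediately deliver the three biconditionals: $X$ and $Y$ are independent (respectively, completely dependent, mutually completely dependent) if and only if $S_{U,V}(C_{X,Y})$ equals $\Pi$ (respectively, is a complete dependence copula, a mutual complete dependence copula), and since $S_{U,V}(C_{X,Y}) = C_{f(X),g(Y)}$, these conditions are by definition precisely independence, complete dependence, or mutual complete dependence of $f(X)$ and $g(Y)$.

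I do not anticipate any substantive obstacle: the real work has already been absorbed into Corollary \ref{cor:star-shuffling} and Lemma \ref{dependence-invariant}. The only point requiring a sentence of care is the Lusin--Souslin invocation that promotes ``$f$ is a Borel bijection'' to ``$X$ and $f(X)$ are mutually (not just one-way) completely dependent'', which is needed to land in Theorem \ref{thm:normC}(3)(d) rather than merely complete dependence. Thus the present theorem is best viewed as a clean repackaging that emphasizes the probabilistic content --- independence, complete dependence and mutual complete dependence are invariant under Borel bijective transformations of each marginal --- of the operator-theoretic statement that shuffling by unit-norm copulas preserves these three dependence classes.
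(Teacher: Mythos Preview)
Your proposal is correct and follows essentially the same approach as the paper: the paper does not give a formal proof but simply notes (in the sentence preceding the theorem) that Corollary \ref{cor:star-shuffling} identifies $C_{f(X),g(Y)}$ as a shuffling image of $C_{X,Y}$, and then invokes Lemma \ref{dependence-invariant}. Your write-up is in fact slightly more careful, since you explicitly verify that $U=C_{f(X),X}$ and $V=C_{Y,g(Y)}$ lie in $\Inv\mathfrak{C}$ via Lusin--Souslin and Theorem \ref{thm:normC}(3), a step the paper leaves implicit.
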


The above theorem suggests that shuffling maps preserve stochastic properties of copulas. In the next section, we contruct a norm which, in some sense, also preserves stochastic properties of copulas.

\section{The $*$-norm}

Our main purpose is to construct a norm under which shuffling maps are isometries and then derive its properties.

\begin{defn} %Denoted by $\|\cdot\|$ the Sobolev norm for copulas. 
Define a map $\|\cdot\|_*:\spanof\mathfrak{C} \rightarrow [0,\infty)$, by \begin{displaymath} \|A\|_* = \sup_{U,V \in \Inv{\mathfrak{C}}} \|U\ast A\ast V\|. \end{displaymath}
\end{defn}
By straightforward verifications, $\norm{\cdot}_*$ is a norm on $\spanof\mathfrak{C}$, called the \emph{$*$-norm}.
%\noindent It can be easily checked that $\|\cdot\|_*$ is a norm on $\spanof\mathfrak{C}$. We call $\|\cdot\|_*$ the $\ast$-norm. 
Moreover, it is clear from the definition that $\|A\| \le \|A\|_*$ for all $A \in \spanof{\mathfrak{C}}$.

%We move on to deriving properties of the $\ast$-norm. 
The following proposition summarizes basic properties of the $*$-norm.  Observe that properties 2.--4.~are the same as those for the Sobolev norm.
%the results used to contruct and derive properties of the measure of dependence.

\begin{prop} \label{prop:*norm}
 Let %$A \in \spanof{\mathfrak{C}}$ and 
 $C \in \mathfrak{C}$. Then the following statements hold.
\begin{enumerate}
  \item $\|C\|_*=1$ if $\|C\|=1$.
  \item $\|C\|_*^2=\frac{2}{3}$ if and only if $C = \Pi$.
  \item \label{prop:*norm3} $\|C-\Pi\|_*^2=\|C\|_*^2-\frac{2}{3}$.
  \item $\|A^T\|_*=\|A\|_*$ for all $A \in \spanof{\mathfrak{C}}$.
\end{enumerate}
\label{props}
\end{prop}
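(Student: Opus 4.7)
The plan is to treat the four parts in order, using Theorem \ref{thm:normC} on the range and extremal properties of the Sobolev norm, the monoid structure $(\mathfrak{C},*)$ with identity $M$ and null element $\Pi$, bilinearity of $*$, and the identities $(A*B)^T=B^T*A^T$ together with $\norm{X^T}=\norm{X}$ (invariance of the Sobolev norm under swapping coordinates).

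For part 1, the upper bound $\norm{C}_*\le 1$ is immediate, since each $U*C*V$ is again a copula and Theorem \ref{thm:normC}.1 gives $\norm{D}\le 1$ for all copulas $D$; the lower bound $\norm{C}_*\ge \norm{C}=1$ follows by taking $U=V=M$. For part 2, the same choice $U=V=M$ yields $\norm{C}\le\norm{C}_*$, so $\norm{C}_*^2=\tfrac{2}{3}$ forces $\norm{C}^2\le\tfrac{2}{3}$, and Theorem \ref{thm:normC}.1--2 then forces equality and $C=\Pi$; the converse follows from $U*\Pi*V=\Pi$ for any $U,V$, which is an instance of $\Pi$ being the null element.

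For part 3, the key calculation is the identity $\norm{D-\Pi}^2=\norm{D}^2-\tfrac{2}{3}$ valid for every copula $D$. Expanding $|\nabla(D-\Pi)|^2=(\partial_1 D-y)^2+(\partial_2 D-x)^2$ and using the marginal relations $\int_0^1\partial_1 D(x,y)\,dx=y$ and $\int_0^1\partial_2 D(x,y)\,dy=x$, the two cross terms each contribute $-2/3$ while $\int_0^1\int_0^1 (x^2+y^2)\,dx\,dy=2/3$, giving $\norm{D-\Pi}^2=\norm{D}^2-\tfrac{2}{3}$. Since $*$ is bilinear and $U*\Pi*V=\Pi$, one has $U*(C-\Pi)*V=U*C*V-\Pi$, which is of the form (copula)$\,-\,\Pi$, so $\norm{U*(C-\Pi)*V}^2=\norm{U*C*V}^2-\tfrac{2}{3}$. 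Taking the supremum over $U,V\in\Inv\mathfrak{C}$ yields $\norm{C-\Pi}_*^2=\norm{C}_*^2-\tfrac{2}{3}$.

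For part 4, the identity $(A*B)^T=B^T*A^T$ combined with $\norm{X^T}=\norm{X}$ gives $\norm{U*A^T*V}=\norm{V^T*A*U^T}$. Since $U\mapsto U^T$ is an involution of $\Inv\mathfrak{C}$ (because $\norm{U^T}=\norm{U}=1$), as $U,V$ range over $\Inv\mathfrak{C}$ so do $V^T,U^T$; the two suprema agree and $\norm{A^T}_*=\norm{A}_*$. The only step requiring a genuine computation is the identity $\norm{D-\Pi}^2=\norm{D}^2-\tfrac{2}{3}$ behind part 3; the rest is almost a direct consequence of the definitions and Theorem \ref{thm:normC}, so I do not foresee any serious obstacle.
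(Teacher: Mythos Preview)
Your proof is correct and follows essentially the same route as the paper: the inequalities $\norm{C}\le\norm{C}_*\le 1$ for part 1, the null-element property of $\Pi$ together with $\norm{C}\le\norm{C}_*$ for part 2, the identity $\norm{D-\Pi}^2=\norm{D}^2-\tfrac{2}{3}$ applied to $D=U*C*V$ followed by taking suprema for part 3, and the transpose identity $(A*B)^T=B^T*A^T$ plus $\norm{X^T}=\norm{X}$ for part 4. The only difference is that you spell out the computation behind $\norm{D-\Pi}^2=\norm{D}^2-\tfrac{2}{3}$, whereas the paper simply invokes it as a known fact.
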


\begin{proof}
1.~is a consequence of the inequality $\|C\| \le \|C\|_*\le 1$. %2. If $\|C\|=1$, then $1= \|C\| \le \|C\|_*\le 1$.  
2.~%Since $U*\Pi*V=\Pi$ for all $U,V \in \Inv\mathfrak{C}$, the result is clear.
follows from the fact that $\Pi$ is the zero of $(\mathfrak{C},*)$.  
%3. From property 4 of Proposition \ref{So-props}, we have 
To prove 3., we first observe that 
\begin{center}$\|U*(C-\Pi)*V\|^2=\|U*C*V-\Pi\|^2=\|U*C*V\|^2-\frac{2}{3}$\end{center} for all $U,V \in \Inv\mathfrak{C}$. The result follows by taking supremum over $U,V \in \Inv\mathfrak{C}$ on both sides.  Finally, %4. Let $A \in \spanof\mathfrak{C}$. 
using the facts that $\|U^T\|=\|U\|$ for all $U \in \mathfrak{C}$,
% In particular, $U^T \in \Inv{C}$ if and only if $U \in \Inv{C}$. Hence,
%\begin{equation*} \|C^T\|_* = \sup_{U,V \in \Inv{\mathfrak{C}}} \|U*C^T*V\|
%= \sup_{\|U\|=1=\|V\|} \|V^T*C*U^T\|
%= \sup_{\|U\|=1=\|V\|} \|V^T*C*U^T\|
%= \sup_{\|U\|=1=\|V\|} \|U*C*V\|
%= \|C\|_*.
%\end{equation*}
%\begin{align*} \|A^T\|_* &= \sup_{U,V \in \Inv{\mathfrak{C}}} \|U*A^T*V\|\\
%&= \sup_{U,V \in \Inv{\mathfrak{C}}} \|V^T*A*U^T\|\\
%&= \sup_{U^T,V^T \in \Inv{\mathfrak{C}}} \|V^T*A*U^T\|\\
%&= \sup_{U,V \in \Inv{\mathfrak{C}}} \|U*A*V\|\\
%&= \|A\|_*.
%\end{align*}
\begin{align*} 
  \|A^T\|_* &= \sup_{U,V \in \Inv{\mathfrak{C}}} \|U*A^T*V\|
= \sup_{U,V \in \Inv{\mathfrak{C}}} \|V^T*A*U^T\|\\
&= \sup_{U^T,V^T \in \Inv{\mathfrak{C}}} \|V^T*A*U^T\|
= \sup_{U,V \in \Inv{\mathfrak{C}}} \|U*A*V\|
= \|A\|_*.
\end{align*}
\end{proof}

%Extreme copulas are invariant under shuffling. \draftnote{extreme copulas}
\begin{thm} 
Let $A\in\spanof\mathfrak{C}$ and $U\in\Inv\mathfrak{C}$. Then $\norm{U*A}_* = \norm{A}_* = \norm{A*U}_*$.  Therefore, shuffling maps are isometries with respect to the $*$-norm.
\end{thm}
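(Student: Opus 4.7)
The plan is to reduce both equalities to the single observation that left (resp.\ right) multiplication by an invertible copula is a bijection of $\Inv\mathfrak{C}$ onto itself, and then to unfold the definition of the $*$-norm and reindex the supremum. By Theorem \ref{thm:normC}, a copula lies in $\Inv\mathfrak{C}$ iff it has Sobolev norm $1$, and by Corollary \ref{lem1}(1) the $*$-product of two unit-norm copulas again has unit norm. Together with the associativity of $*$ (the monoid structure on $\mathfrak{C}_2$), this shows that for any fixed $U\in\Inv\mathfrak{C}$ the assignments $V\mapsto V*U$ and $V\mapsto U*V$ map $\Inv\mathfrak{C}$ into itself, and are bijections with inverses $V\mapsto V*U^T$ and $V\mapsto U^T*V$ (invoking that $U^T$ is the $*$-inverse of $U$, which is part of the characterization in Theorem \ref{thm:normC}).

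Next I would compute, starting from the definition,
\[
\|U*A\|_* \;=\; \sup_{V,W\in\Inv\mathfrak{C}}\|V*(U*A)*W\| \;=\; \sup_{V,W\in\Inv\mathfrak{C}}\|(V*U)*A*W\|,
\]
where the second equality uses associativity. Setting $V'=V*U$ and using that $V\mapsto V*U$ is a bijection of $\Inv\mathfrak{C}$ onto itself, the right-hand side becomes $\sup_{V',W\in\Inv\mathfrak{C}}\|V'*A*W\| = \|A\|_*$. The identity $\|A*U\|_*=\|A\|_*$ is obtained by the symmetric argument, reindexing the right-hand factor via the bijection $W\mapsto U*W$ on $\Inv\mathfrak{C}$.

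For the final clause, given $U,V\in\Inv\mathfrak{C}$ and $A\in\spanof\mathfrak{C}$, I would simply chain the two identities just established:
\[
\|S_{U,V}(A)\|_* \;=\; \|U*A*V\|_* \;=\; \|A*V\|_* \;=\; \|A\|_*,
\]
so $S_{U,V}$ is an isometry with respect to $\|\cdot\|_*$. There is no real obstacle here beyond making the reindexing rigorous; the only subtle point is to justify that $\Inv\mathfrak{C}$ is a group under $*$, which is immediate from Theorem \ref{thm:normC} and Corollary \ref{lem1}. Note that the argument extends linearly from $\mathfrak{C}$ to $\spanof\mathfrak{C}$ because $*$ distributes over addition (each factor in the definition of $U*A*V$ is bilinear in the relevant entries), so the supremum and the reindexing pass through without change.
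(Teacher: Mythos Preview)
Your proof is correct and follows essentially the same approach as the paper: both arguments reduce to the observation that for fixed $U\in\Inv\mathfrak{C}$, the map $V\mapsto V*U$ (resp.\ $W\mapsto U*W$) is a bijection of $\Inv\mathfrak{C}$ onto itself, and then reindex the supremum defining $\|\cdot\|_*$. The paper's version is slightly terser, invoking Corollary~\ref{lem1} directly for the bijection, while you spell out the group structure via Theorem~\ref{thm:normC} and add the explicit chaining for $S_{U,V}$; these are cosmetic differences only.
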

\begin{proof} 
We shall prove only one side of the equation as the other can be proved in a similar fashion. 
Let $A \in \spanof\mathfrak{C}$ and $U_o \in \Inv\mathfrak{C}$. Then by Corollary \ref{lem1}, for any $C \in \mathfrak{C}$, %$\|C*U\|=1$ if and only if $\|C\|=1$ 
%$C*U_o \in \Inv{\mathfrak{C}}$ if and only if $C \in \Inv{\mathfrak{C}}$; and 
$U_o*C \in \Inv{\mathfrak{C}}$ if and only if $C \in \Inv{\mathfrak{C}}$.
%In other words, $C*U \in \Inv{\mathfrak{C}}$ if and only if $C \in \Inv{\mathfrak{C}}$. 
%Similarly, for any $D \in \mathfrak{C}$, $U_o*D \in \Inv{\mathfrak{C}}$ if and only if $D \in \Inv{\mathfrak{C}}$. 
Hence,
%\begin{equation*}
$\|U_o*A\|_* = \sup_{U,V \in \Inv{\mathfrak{C}}} \|(U*U_o)*A*V)\|
= \sup_{U,V \in \Inv{\mathfrak{C}}} \|U*A*V\|
= \|A\|_*.$
%\end{equation*}
\end{proof}

%Here, we give two examples: the first example suggests that the Sobolev norm and the $\ast$-norm are distinct and the other gives a class of copulas on which the Sobolev norm and the $\ast$-norm are equal.

\begin{exam} 
  From Example \ref{exam:shufflediffnorm}, %in page \pageref{exam:shufflediffnorm}, %From the setup of Example \ref{not-iso}, 
  let $\alpha \in (0,1)$, $\theta\in[-1,1]\setminus\set{0}$ and $A_{\theta} = S_{1/2}\ast C_{\theta}$. Then $S_{1/2}\ast A_{\theta} = C_{\theta}$. % we have $S_{1/2}\ast A_{\theta} =S_{1/2}\ast S_{1/2}\ast C_{\theta}=C_{\theta}$. 
  Since $\|A_{\theta}\| < \|C_{\theta}\|$ for any $\theta \neq 0$. Then
\begin{center} $\|A_{\theta}\|_* \ge \|S_{1/2} \ast A_{\theta}\|=\|C_{\theta} \|>\|A_{\theta}\|.$ \end{center}
Hence, the Sobolev norm and the $\ast$-norm are distinct.
\label{norms-distinct}
\end{exam}

\begin{exam} \label{exam:EqualNorms}
  Let $\alpha\in[0,1]$ and $C$ be a copula.  Recall that one can show using only the property $\norm{C-\Pi}^2 = \norm{C}^2-\frac{2}{3}$ of the norm $\norm{\cdot}$ (see \cite{siburg2008spc}) that $\norm{\alpha C + (1-\alpha)\Pi}^2 = \alpha^2\paren{\norm{C}^2-\frac{2}{3}} + \frac{2}{3}$.  Since the $*$-norm shares this same property with the Sobolev norm (see Proposition \ref{prop:*norm}(\ref{prop:*norm3})), we also have \[\norm{\alpha C + (1-\alpha)\Pi}_*^2 = \alpha^2\paren{\norm{C}_*^2-\frac{2}{3}} + \frac{2}{3}.\] So $\|\alpha C +(1-\alpha)\Pi\|^2_* = \|\alpha C +(1-\alpha)\Pi\|^2$ for all copulas $C$ satisfying $\norm{C}_* = \norm{C}$.  In particular, the Sobolev norm and the $\ast$-norm coincide on the family of convex sums of an invertible copula and the product copula, where the norms are equal to $(\alpha^2+2)/3$.
% Consider the family of convex sums of an invertible copula and the product copula $\{\alpha A +(1-\alpha)\Pi\}_{\alpha \in [0,1]}$. Then 
%\begin{align*}
%\|\alpha A +(1-\alpha)\Pi\|^2_*&= \|\alpha A+(1-\alpha)\Pi-\Pi\|^2_*+\frac{2}{3}\\
%&= \|\alpha A - \alpha \Pi\|^2_* +\frac{2}{3}\\
%&= \alpha ^2 \|A - \Pi\|^2_* +\frac{2}{3}\\
%&= \alpha ^2 (\|A\|_*^2-\frac{2}{3})^2 +\frac{2}{3}\\
%&= \alpha ^2 (\|A\|^2-\frac{2}{3})^2 +\frac{2}{3}\\
%&= \alpha ^2 \|A - \Pi\|^2 +\frac{2}{3}\\
%&= \|\alpha A - \alpha \Pi\|^2 +\frac{2}{3}\\
%&= \|\alpha A +(1-\alpha)\Pi\|^2.
%\end{align*}
%Hence, the Sobolev norm and the $\ast$-norm coincide on the family of convex sums of an invertible copula and the product copula.
\end{exam}

\begin{lem} \label{lem:shufflingCD}
  Let $A\subseteq [0,1]$ be a Borel measurable set. Define the function $s_A\colon [0,1] \to [0,1]$ by
  \begin{equation} \label{eq:shufflingCD}
    s_A(x) = \begin{cases} m([0,x]\cap A) & \text{if } x\in A,\\
                           m(A) + m([0,x]\setminus A) & \text{if } x\notin A. \end{cases}
  \end{equation}                      
Then $s_A$ is measure-preserving and \emph{essentially invertible} in the sense that there exists a Borel measurable function $t_A$ for which $s_A\circ t_A(x) = x = t_A\circ s_A(x)$ a.e.~$x\in[0,1]$.  Such a $t_A$ is called an \emph{essential inverse} of $s_A$.
\end{lem}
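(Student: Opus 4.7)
The plan is to verify the two claimed properties separately, exploiting the natural partition of $[0,1]$ induced by the definition of $s_A$: the first branch carries $A$ into $[0,m(A)]$ and the second carries $A^c$ into $[m(A),1]$.

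For measure-preservation, since these two image intervals overlap only at the measure-zero singleton $\{m(A)\}$, it suffices to show that $s_A|_A$ is measure-preserving from $A$ onto $[0,m(A)]$ and that $s_A|_{A^c}$ is measure-preserving from $A^c$ onto $[m(A),1]$. Introduce the Lipschitz function $\phi(x) = m([0,x]\cap A)$ on all of $[0,1]$; it satisfies $\phi'=\chi_A$ a.e.\ and $\phi|_A = s_A|_A$. Since $\phi$ is nondecreasing, for any interval $[c,d]\subseteq[0,m(A)]$ the set $\phi^{-1}([c,d])$ is an interval $[x_c,x_d]$ with $\phi(x_c)=c$ and $\phi(x_d)=d$, and
\[m\bigl(\phi^{-1}([c,d])\cap A\bigr) \;=\; \int_{x_c}^{x_d}\chi_A(t)\,dt \;=\; \phi(x_d)-\phi(x_c) \;=\; d-c,\]
by the fundamental theorem of calculus for absolutely continuous functions. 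Extension from intervals to all Borel sets is standard (monotone class or outer regularity). A symmetric argument with $\psi(x) = m([0,x]\setminus A)$ gives the corresponding statement on the other piece.

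For essential invertibility I would define $t_A$ piecewise as the right-continuous generalized inverse on each branch:
\[t_A(y) \;=\; \begin{cases} \inf\{x\in[0,1]:\phi(x)\geq y\}, & y\in[0,m(A)],\\ \inf\{x\in[0,1]:\psi(x)\geq y-m(A)\}, & y\in(m(A),1]. \end{cases}\]
Each piece is nondecreasing, hence Borel measurable. Standard facts about generalized inverses of nondecreasing Lipschitz functions then give $s_A(t_A(y))=y$ for a.e.\ $y$ and $t_A(s_A(x))=x$ for a.e.\ $x$. The exceptional sets are contained in the images, respectively the sources, of the maximal intervals of constancy of $\phi$ and $\psi$; these have Lebesgue measure zero because each such interval of $\phi$ lies in $A^c$ (where $\phi'=0$) and is collapsed by $\phi$ to a single point, and analogously for $\psi$.

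The only real technical point is the one-dimensional change-of-variables identity displayed above, which however reduces directly to the fundamental theorem of calculus applied to $\phi$; the remaining work is routine bookkeeping of null sets, principally at the singleton $\{m(A)\}$ lying at the boundary between the two branches, which contributes nothing. I expect no obstruction beyond careful handling of these null sets.
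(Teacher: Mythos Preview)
Your proof is correct. The measure-preservation argument is essentially the paper's, recast in the language of the absolutely continuous function $\phi$ and the fundamental theorem of calculus rather than directly computing $m(s_A^{-1}[0,b])$.

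The essential-invertibility argument, however, is genuinely different. The paper defines $t_A$ non-constructively (pick some preimage), then proves $t_A\circ s_A=\mathrm{id}$ a.e.\ via the Lebesgue density theorem, and finally invokes the Lusin--Souslin theorem to obtain Borel measurability of the inverse. Your route is more elementary: you write down $t_A$ explicitly as the generalized (quantile) inverse of $\phi$ on each branch, which is monotone and hence automatically Borel, and reduce the almost-everywhere identities to the standard facts that (i) the quantile function of the measure $\chi_A\,dm$ pushes Lebesgue measure on $[0,m(A)]$ back to $m|_A$ (so $t_A(y)\in A$ for a.e.\ $y$, whence $s_A(t_A(y))=\phi(t_A(y))=y$), and (ii) the exceptional set for $t_A\circ s_A$ on $A$ sits inside the intervals of constancy of $\phi$, which meet $A$ in a null set. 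This avoids both the density theorem and Lusin--Souslin. The one place worth making explicit in a final write-up is the step $t_A(y)\in A$ a.e., since without it $s_A(t_A(y))$ would land on the wrong branch; it follows immediately from the push-forward property of the quantile function, but is not quite covered by your phrase ``intervals of constancy''.
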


\begin{proof} 
Clearly, $s_A$ is Borel measurable.

\noindent
\textbf{$\bullet$ $s_A$ is measure-preserving:} It suffices to prove that $m(s_A^{-1}[0,b])=m([0,b])$ for all $b \in [0,1]$. Now if $b \leq m(A)$, then 
\begin{equation*}
s_A^{-1}[0,b] = \{ x\in A\colon s(x) \in [0,b] \} 
=\set{ x \in A\colon m(A \cap [0,x]) \leq b }.
\end{equation*}
By continuity of $m$, there exists a largest $x_0$ such that $m(A \cap [0,x_0]) = b$. Then %$\{x\colon m(A \cap [0,x]) \leq b\} = [0,x_0]$ and 
$s_A^{-1}[0,b] %= \{x \in A : m(A \cap [0,x]) \in [0,b] \} 
= A \cap \{x\colon m(A \cap [0,x]) \leq b\} = A \cap [0,x_0]$. Therefore, $m(s_A^{-1}[0,b]) = m(A \cap [0,x_0]) = b$.
The case where $b > m(A)$ can be proved similarly.

\noindent
\textbf{$\bullet$ $s_A$ is essentially invertible:}
  Using continuity of $m$, we shall define an auxiliary function $t_A$ on $[0,1]$ as follows.  If $y\leq m(A)$, there exists a corresponding $x\in A$ such that $m\paren{[0,x]\cap A} = y$.  If $y > m(A)$, there exists a corresponding $x\notin A$ such that $m(A) + m\paren{[0,x]\setminus A} = y$.  In these two cases, we define $t_A(y) = x$.  Generally, $t_A$ is not unique as there might be many such $x$'s.  
%Define $t_A$ on $[0,1]$ by 
%\[t_A(y) = \begin{cases} \inf\set{ x\in A \colon m([0,x]\cap A) = y} & \text{if } y\leq m(A),\\
%                           \inf\set{ x' \in X\setminus A \colon m(A) + m([0,x']\setminus A) = y} & \text{if } y> m(A). \end{cases}\]
%By continuity of Lebesgue measure, such $x$ and $x'$ always exist and hence $t_A(y)\in [0,1]$. 
%
%In any cases, we shall show that $s_A\circ t_A$ and $t_A\circ s_A$ are the identity map on $[0,1]$ a.e.  \textbf{$t_A$ is measurable because ?}

%For $x\leq m(A)$, if $t_A(x) = z$ then $z\in A$ is such that $m\paren{[0,z]\cap A} = x$ and hence $s_A\circ t_A(x) = s_A(z) = m\paren{[0,z]\cap A} =  x$.  For $x > m(A)$, $t_A(x) = z$ means that $z\notin A$ and $m(A) + m\paren{[0,z]\setminus A} = x$ which implies that $s_A(t_A(x)) = s_A(z) = m(A) + m\paren{[0,z]\setminus A} =  x$ as desired.  

We shall show that $s_A$ is injective outside a Borel set of measure zero by proving that $\iota \equiv t_A\circ s_A$ is the identity map on $[0,1]\setminus Z$ for some Borel set $Z$ of measure zero.  If $x\in A$ then $s_A(x)\leq m(A)$ so that 
$\iota(x) \in A$ and $m([0,\iota(x)]\cap A) = s_A(x) = m([0,x]\cap A)$.  Similarly, if $x\notin A$ then $s_A(x)\geq m(A)$, $\iota \notin A$ and  $m([0,\iota(x)]\setminus A) = m([0,x]\setminus A)$. 
Consider the set of all $x\in A$ for which $\iota(x)\neq x$.  If $\iota(x)<x$ then $m\paren{(\iota(x),x]\cap A} = 0$ which implies that 
\begin{equation} \label{eq:Density} 
  \lim_{r\to 0^+} \dfrac{m\paren{(x-r,x+r)\cap A}}{2r} = \lim_{r\to 0^+} \dfrac{m\paren{(x,x+r)\cap A}}{2r} \leq \dfrac{1}{2}
\end{equation} 
wherever exists.
Now, a simple application of the Radon-Nikodym theorem (see, e.g., \cite{folland99}) yields that the limit in \eqref{eq:Density} is equal to $1$ for all $x\in A\setminus Z$ where $Z$ is a Borel set of measure zero.  Therefore, $\set{x\in A\colon \iota(x)<x}$ has zero measure.  Similarly, $\set{x\in A\colon \iota(x)>x}$ is a null set and hence $m\paren{\set{x\in A\colon \iota(x)\neq x}} = 0$.  Therefore, $\iota(x)=t_A\circ s_A (x)=x$ except possibly on a Borel set $Z$ of measure zero.  %$m\paren{\set{x\notin A\colon \iota(x)\neq x}} = 0$.  Thus, $\iota(x) = x$ a.e.
%But $\set{x\in A\colon \iota(x)\neq x, m([0,\iota(x)]\cap A) = m([0,x]\cap A)}$ is equal to the union 
%\[ \set{x\in A\colon \iota(x) < x, m((\iota(x),x]\cap A) = 0} \cup \set{x\in A\colon \iota(x) > x, m((x,\iota(x)]\cap A) = 0}\]

By the Lusin-Souslin Theorem (see,  e.g., \cite[Corollary 15.2]{Ke}), the injective Borel measurable function $s_A$, mapping a Borel set $[0,1]\setminus Z$ onto a Borel set $s_A([0,1]\setminus Z)$, has a Borel measurable inverse, still denoted by $t_A$.  Now, since $s_A$ is measure-preserving, its range which is the domain of $t_A$ has full measure. This guarantees that $t_A$ can be extended to a Borel measurable function on $[0,1]$ which is an essential inverse of $s_A$.
\end{proof}

\begin{thm} \label{thm:cdnorm1}
   Let $X,Y$ be random variables on a common probability space for which $Y$ is completely dependent on $X$ or $X$ is completely dependent on $Y$.  Then $\|C_{X,Y}\|_* = 1$.
\end{thm}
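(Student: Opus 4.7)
Since every $U \ast C_{X,Y} \ast V$ remains a copula, Theorem \ref{thm:normC} immediately gives $\|C_{X,Y}\|_* \leq 1$, so the plan is to exhibit a sequence $\{U_n\} \subseteq \Inv\mathfrak{C}$ with $\|U_n \ast C_{X,Y}\| \to 1$. Proposition \ref{props}(4) together with $C_{X,Y}^T = C_{Y,X}$ lets us reduce to the case that $Y$ is the one completely dependent on $X$; pushing through the marginal distributions (which do not alter the copula) leaves us with $X, Y$ uniform on $[0,1]$ and $Y = h(X)$ almost surely, where $h\colon [0,1] \to [0,1]$ is a Borel measurable, measure-preserving, but in general non-injective function.

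For each $n \geq 1$, partition $[0,1]$ into equal sub-intervals $B_i = [(i-1)/n,\, i/n]$ and set $A_i = h^{-1}(B_i)$; these are disjoint Borel sets of measure $1/n$. Define $\phi_n\colon [0,1] \to [0,1]$ block-wise by $\phi_n(x) = (i-1)/n + s_{A_i}(x)$ for $x \in A_i$, using the rearrangement from Lemma \ref{lem:shufflingCD}. Each $s_{A_i}|_{A_i}$ is an essentially bijective measure-preserving map $A_i \to [0, 1/n]$, so the assembled $\phi_n$ is Borel measurable, measure-preserving, and essentially bijective on $[0,1]$. By Theorem \ref{thm:normC}, $U_n := C_{\phi_n(X), X}$ has $\|U_n\| = 1$, and Corollary \ref{decompose} (with $g = \mathrm{id}$) yields $U_n \ast C_{X,Y} = C_{\phi_n(X), Y}$.

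The decisive feature is the equivalence $\phi_n(X) \in B_i \Longleftrightarrow X \in A_i \Longleftrightarrow Y \in B_i$, which forces the doubly stochastic measure of $C_{\phi_n(X), Y}$ to sit inside the block diagonal $\bigcup_{i} B_i \times B_i$. Setting $\tilde h := h \circ \phi_n^{-1}$ (measure-preserving with $\tilde h(B_i) \subseteq B_i$), one has $\partial_1 C_{\phi_n(X), Y}(u,v) = \chi_{v \geq \tilde h(u)}$ a.e., hence $\iint (\partial_1)^2 = \int (1 - \tilde h) = 1/2$. For the second partial $\partial_2 C_{\phi_n(X), Y}(u,v) = P(\phi_n(X) \leq u \mid Y = v)$, the block concentration forces $\partial_2 \in \{0, 1\}$ whenever $u, v$ lie in different blocks; combined with the identity $\iint \partial_2 = 1/2$ valid for every copula and with the explicit on-block contribution $\sum_j \int_{B_j \times B_j} \partial_2 \,du\,dv = n \cdot \frac{1}{2n^2} = \frac{1}{2n}$, this gives $\iint (\partial_2)^2 \geq \frac{1}{2} - \frac{1}{2n}$. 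Therefore $\|U_n \ast C_{X,Y}\|^2 \geq 1 - \frac{1}{2n} \to 1$, which yields $\|C_{X,Y}\|_* = 1$.

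The main technical point I expect is the on-block bookkeeping for $\partial_2$: explicitly identifying $C_{\phi_n(X), Y}(j/n, v) = v$ and $C_{\phi_n(X), Y}((j-1)/n, v) = (j-1)/n$ for $v \in B_j$, which together give $\int_{B_j}\bigl[v - (j-1)/n\bigr]\,dv = \frac{1}{2n^2}$. Both identifications rest on the block-support property and on the doubly stochastic marginals of $C_{\phi_n(X), Y}$, and everything else is a clean Riemann-sum estimate.
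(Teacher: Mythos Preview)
Your argument is correct and shares the paper's core idea: use the rearrangement maps of Lemma~\ref{lem:shufflingCD} to build an invertible copula that pushes the support of the shuffled copula into diagonal blocks, forcing the Sobolev norm toward~$1$. The executions differ. The paper proceeds by iterated bisection: at stage~$n$ it manufactures $B_n = S_n * \cdots * S_1$ so that $B_n * C$ is supported in the $2^n$ dyadic diagonal squares, and then argues that $B_n * C \to M$ pointwise (and likewise for the partial derivatives), whence $\|B_n * C\| \to 1$. You instead jump directly to an $n$-block rearrangement via a single $\phi_n$ assembled from the restrictions $s_{A_i}|_{A_i}$, and extract the explicit lower bound $\|U_n * C\|^2 \geq 1 - 1/(2n)$ by splitting $\iint(\partial_2)^2$ into its on- and off-block parts. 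Your route is more quantitative and sidesteps the pointwise-convergence step; the paper's recursive picture is a bit more geometric but less explicit about the rate.

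One small bookkeeping slip in your ``technical point'': to get $\int_{B_j\times B_j}\partial_2 = 1/(2n^2)$ you should integrate $\partial_2$ in $v$ first, which requires $C(u,j/n)=u$ and $C(u,(j-1)/n)=(j-1)/n$ for $u\in B_j$, not the values $C(j/n,v)$ and $C((j-1)/n,v)$ that you wrote down (those feed $\int_{B_j\times B_j}\partial_1$). The block-support reasoning gives these just as easily and yields the same $1/(2n^2)$, so the slip is harmless.
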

\begin{proof}
Assume that $Y$ is completely dependent on $X$.  Then $C=C_{X,Y}$ is a complete dependence copula for which $C = C_{U,f(U)}$ where $U$ is a uniform random variable on $[0,1]$ and $f \colon [0,1] \rightarrow [0,1]$ is a measure-preserving Borel function. Note that $f(U)$ is also a uniform random variable and that $C$ is left invertible.

As the first step, we shall construct an invertible copula $S_1$ such that $S_1*C$ is supported in the two diagonal squares $[0,1/2]^2 \cup [1/2,1]^2$. 
Let $A = f^{-1}([0,1/2])$, denote $s=s_A$ as defined in \eqref{eq:shufflingCD} and put $S_1=C_{s(U),U}$. By Lemma \ref{lem:shufflingCD}, $s$ is invertible a.e.~and hence $S_1$ is invertible. \draftnote{Use Lemma \ref{lem:shufflingCD} here} By Corollary \ref{cor:star-shuffling}, \[S_1*C=C_{s(U),U}*C_{U,f(U)}=C_{V,(f \circ s^{-1})(V)}\] where $V = s(U)$ is still a uniform random variable on $[0,1]$. It is left to verify that the support of $S_1*C$ lies entirely in the two diagonal squares which can be done by showing that the graph of $f \circ s^{-1}$ is contained in the area.  In fact, since $s^{-1}([0,\frac{1}{2}]) \subseteq A$, it follows that $(f \circ s^{-1})([0,\frac{1}{2}]) \subseteq f(A) \subseteq [0,\frac{1}{2}]$. The inclusion $(f \circ s^{-1})([\frac{1}{2},1]) \subseteq [\frac{1}{2},1]$ can be shown similarly.  As a consequence, $S_1*C$ can be written as an ordinal sum of two copulas, $C_1$ and $C_2$, with respect to the partition $\{[0,\frac{1}{2}],[\frac{1}{2},1]\}$.  Since left-multiplying a copula $C$ by an invertible copula amounts to shuffling the first coordinate of $C$, it follows that $C_1$ and $C_2$ are still supported on closures of graphs \draftnote{ess. closures?} of measure-preserving functions. 

Next, we apply the same process to $C_1$ and $C_2$ which yields invertible copulas $A_1$ and $A_2$ for which $A_1*C_1$ and $A_2*C_2$ are both supported in $[0,\frac{1}{2}]^2 \cup [\frac{1}{2},1]^2$ and define $S_2$ to be the ordinal sum of $A_1$ and $A_2$ with respect to the partition $\{[0,\frac{1}{2}],[\frac{1}{2},1]\}$. $S_2$ is again an invertible copula. Then the support of $S_2 * S_1 * C$ is contained in the four diagonal squares $\bigcup_{i=1}^4 \brac{\frac{i-1}{4},\frac{i}{4}}^2$.  Therefore, $S_2 * S_1 * C$ is an ordinal sum with respect to the partition $\{[0,\frac{1}{4}],[\frac{1}{4},\frac{1}{2}],[\frac{1}{2},\frac{3}{4}],[\frac{3}{4},1]\}$ of four copulas each of which is supported on the closure of graph \draftnote{ess. closures?} of a measure-preserving function.  By successively applying this process, we obtain a sequence of invertible copulas $\{B_n\}_{n=1}^{\infty}$, defined by $B_n = S_n*\dots*S_2*S_1$, such that the support of $B_n*C$ is a subset of the $2^n$ diagonal squares. So $B_n*C \rightarrow M$ pointwise outside the main diagonal and so are their partial derivatives. Hence $\|B_n*C\|\rightarrow 1$. Thus $\|C\|_* =1$.

If $X$ is completely dependent on $Y$ then $C=C_{X,Y}$ is right invertible and similar process where suitably chosen $S_n$'s are multiplied on the right yields a sequence $\set{B_n}$ of invertible copulas such that $\|C*B_n\|\to 1$ as desired
%Observe that the process above can be used for the case where $C=C_{g(V),V}$, a right invertible copula where $g$ is countably piecewise continuous and $V$ is a uniform random variable on $[0,1]$.
\end{proof}

\begin{cor} \label{cor:L*R}
  Let $L$ and $R$ be left invertible and right invertible copulas, respectively. Then $\|L*R\|_* = 1$.
\end{cor}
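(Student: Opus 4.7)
The strategy is to exhibit, for each $n$, invertible copulas $U_n, V_n$ such that $\|U_n \ast (L\ast R)\ast V_n\| \to 1$. Since every copula has Sobolev norm at most $1$, the supremum defining $\|L\ast R\|_\ast$ is automatically bounded above by $1$, and so producing such a sequence will force $\|L\ast R\|_\ast = 1$.

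Since $L$ is left invertible, I will revisit the construction in the proof of Theorem \ref{thm:cdnorm1} (applied to the case where $Y$ is completely dependent on $X$) to obtain a sequence $\{U_n\}$ of invertible copulas for which $U_n \ast L$ and its partial derivatives converge pointwise off the main diagonal to $M$ and its partial derivatives. Because partial derivatives of copulas are uniformly bounded by $1$, the dominated convergence theorem strengthens this to Sobolev-norm convergence $U_n \ast L \to M$. The completely analogous construction applied on the right to the right-invertible copula $R$ (the case where $X$ is completely dependent on $Y$ in the proof of Theorem \ref{thm:cdnorm1}) yields invertible copulas $V_n$ with $R \ast V_n \to M$ in the Sobolev norm.

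Invoking the joint continuity of $\ast$ with respect to the Sobolev norm, one then has
\begin{equation*}
U_n \ast (L\ast R) \ast V_n \;=\; (U_n \ast L)\ast (R\ast V_n) \;\longrightarrow\; M\ast M \;=\; M,
\end{equation*}
and in particular $\|U_n \ast (L\ast R) \ast V_n\| \to \|M\| = 1$. Combined with the trivial upper bound from the first paragraph, this gives $\|L\ast R\|_\ast = 1$. The only subtle point is the upgrade from the pointwise convergence of partial derivatives recorded in the proof of Theorem \ref{thm:cdnorm1} to $L^2$ convergence of those derivatives, which is precisely what is needed for Sobolev-norm convergence; this is handled by the dominated convergence theorem, using that the derivatives in question are uniformly bounded by $1$.
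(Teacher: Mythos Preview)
Your proposal is correct and follows essentially the same route as the paper's proof: both extract from the proof of Theorem~\ref{thm:cdnorm1} sequences of invertible copulas $U_n,V_n$ with $U_n\ast L\to M$ and $R\ast V_n\to M$ in the Sobolev norm, then invoke joint continuity of $\ast$ to conclude $U_n\ast(L\ast R)\ast V_n\to M$ and hence $\|L\ast R\|_\ast=1$. Your only addition is making explicit the dominated-convergence step that upgrades the pointwise convergence of partial derivatives (recorded in the proof of Theorem~\ref{thm:cdnorm1}) to Sobolev-norm convergence; the paper leaves this implicit.
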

\begin{proof}
From Theorem \ref{thm:cdnorm1}, there exist sequences of invertible copulas $S_n$ and $T_n$ such that $S_n*L \rightarrow M$ and $R*T_n \rightarrow M$ in the Sobolev norm.  By the joint continuity of the $*$-product with respect to the Sobolev norm, $S_n*L*R*T_n \rightarrow M*M = M$. Therefore, $\|L*R\|_*=1$.
\end{proof}
%Note that the class of copulas of the form $C_L * C_R$ is quite large and they are all copulas whose supports have Lebesgue measure zero in $[0,1]^2$. 

Let us give some examples of copulas of the form $L*R$. Consider a copula $C$, $C^T$ and $C*C^T$ whose supports are shown in the figure below.
%\commentout{
\begin{figure}[htb]
%\psset{xunit=3cm,yunit=3cm}
\psset{unit=2.8cm}
\begin{center}
\begin{pspicture*}(-0.25,-0.25)(4.2,1.2)
\psline[linecolor=black](0,1)(1,1)
\psline[linecolor=black](1,0)(1,1)
\psline[linecolor=black](0,0)(0,1)
\psline[linecolor=black](0,0)(1,0)

\psline[linecolor=black](1.5,1)(2.5,1)
\psline[linecolor=black](2.5,0)(2.5,1)
\psline[linecolor=black](1.5,0)(1.5,1)
\psline[linecolor=black](1.5,0)(2.5,0)

\psline[linecolor=black](3,1)(4,1)
\psline[linecolor=black](4,0)(4,1)
\psline[linecolor=black](3,0)(3,1)
\psline[linecolor=black](3,0)(4,0)

\psline[linecolor=black,linestyle=dotted](0.5,0)(0.5,1)

\psline[linecolor=black,linestyle=dotted](1.5,0.5)(2.5,0.5)

\psline[linecolor=black,linestyle=dotted](3,0.5)(4,0.5)
\psline[linecolor=black,linestyle=dotted](3.5,0)(3.5,1)

\psplot[linecolor=blue,plotpoints=400]{0}{0.5}{2 x mul}
\psplot[linecolor=blue,plotpoints=400]{0.5}{1}{2 2 x mul sub}

\psplot[linecolor=blue,plotpoints=400]{1.5}{2.5}{1.75 x 0.5 mul sub}
\psplot[linecolor=blue,plotpoints=400]{1.5}{2.5}{x 0.5 mul 0.75 sub}

\psplot[linecolor=blue,plotpoints=400]{3}{4}{x 3 sub}
\psplot[linecolor=blue,plotpoints=400]{3}{4}{1 x 3 sub sub}

\commentout{  
\uput{0.2}[270](0.5,0){$\frac{1}{2}$}
\uput{0.2}[180](1.5,0.5){$\frac{1}{2}$}
\uput{0.2}[270](3.5,0){$\frac{1}{2}$}
\uput{0.2}[180](3,0.5){$\frac{1}{2}$}
}
\end{pspicture*}
\end{center}
  \caption[ ]{the supports of $C$, $C^T$ and $C*C^T$, respectively} 
  \label{fig:example1}
\end{figure}
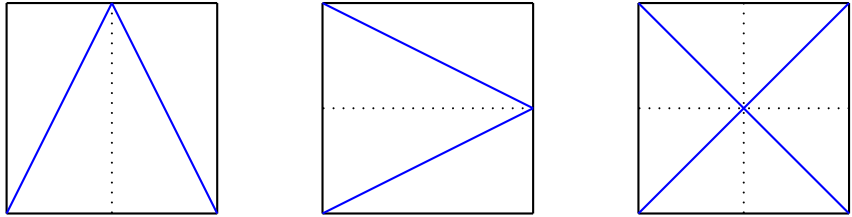 
%}
As mentioned before, the copula $C*C^T$, though neither left nor right invertible, has unit $*$-norm. 
%One may see that the collection of copulas of the form $C_L*C_R$ is, indeed, quite large. This means there are many copulas having unit $*$-norms. But observe that all copulas in this collection have measure zero supports. 

%\newpage
\commentout{
Another example of this kind of copulas are given below. Let us remark that the support of $C*C^T$, again, has Lebesgue measure zero in $[0,1]^2$.

\begin{figure}[htb]
%\psset{xunit=3cm,yunit=3cm}
\psset{unit=2.8cm}
\begin{center}
\begin{pspicture*}(-0.25,-0.25)(4.2,1.2)
\psline[linecolor=black](0,1)(1,1)
\psline[linecolor=black](1,0)(1,1)
\psline[linecolor=black](0,0)(0,1)
\psline[linecolor=black](0,0)(1,0)

\psline[linecolor=black](1.5,1)(2.5,1)
\psline[linecolor=black](2.5,0)(2.5,1)
\psline[linecolor=black](1.5,0)(1.5,1)
\psline[linecolor=black](1.5,0)(2.5,0)

\psline[linecolor=black](3,1)(4,1)
\psline[linecolor=black](4,0)(4,1)
\psline[linecolor=black](3,0)(3,1)
\psline[linecolor=black](3,0)(4,0)

\psline[linecolor=black,linestyle=dotted](0.5,0)(0.5,1)

\psline[linecolor=black,linestyle=dotted](1.5,0.5)(2.5,0.5)

\psline[linecolor=black,linestyle=dotted](3,0.5)(4,0.5)
\psline[linecolor=black,linestyle=dotted](3.5,0)(3.5,1)

\psplot[linecolor=blue,plotpoints=400]{0}{0.5}{2 x mul}
\psplot[linecolor=blue,plotpoints=400]{0.5}{1}{2 x mul 1 sub}

\psplot[linecolor=blue,plotpoints=400]{1.5}{2.5}{x 0.5 mul 0.25 sub}
\psplot[linecolor=blue,plotpoints=400]{1.5}{2.5}{x 0.5 mul 0.75 sub}

\psplot[linecolor=blue,plotpoints=400]{3}{4}{x 3 sub}
\psplot[linecolor=blue,plotpoints=400]{3}{3.5}{x 3 sub 0.5 add}
\psplot[linecolor=blue,plotpoints=400]{3.5}{4}{x 3 sub 0.5 sub}
  
\uput{0.2}[270](0.5,0){$\frac{1}{2}$}
\uput{0.2}[180](1.5,0.5){$\frac{1}{2}$}
\uput{0.2}[270](3.5,0){$\frac{1}{2}$}
\uput{0.2}[180](3,0.5){$\frac{1}{2}$}

\end{pspicture*}
\end{center}
  \caption[ ]{the supports of $C$, $C^T$ and $C*C^T$, respectively} 
  \label{fig:example2}
\end{figure} 

At this point, we conjecture that the copulas with unit $*$-norm are exactly those whose supports have Lebesgue measure zero in $[0,1]^2$
%There is one observation that we want to point out here. 
Evidently, the set of invertible copulas is closed under the Sobolev norm. But from the proof above, we can approximate, in some sense, a complete dependence copula by invertible copulas. For example, if $C$ is left invertible, then we know that $C^T*C=M$. On the other hand, there exists a sequence of invertible copulas $T_n$ such that $C^T * T_n \rightarrow M$. So we can view the sequence $T_n$ as an approximation of the copula $C$. 
}
\section{An application: a new measure of dependence}

%A property of $\omega$ which gives it the name ``a measure of mutual complete dependence'' states that $\omega(X,Y) = 1$ if and only if there exist Borel measurable bijections $f$ and $g$ such that $Y=f(X)$ and $X=g(Y)$ almost surely.   
In \cite{renyi1959}, R\'{e}nyi triggered numerous interests in finding the ``right''  sets of properties that a natural (if any) measure of dependence $\delta(X,Y)$ should possess.  
For reference, the seven postulates %(\ref{Renyi:def}-\ref{Renyi:inv}) 
proposed by R\'{e}nyi are listed below. 
% that a measure of dependence $\delta$ should satisfy, 
%$\omega$, defined for continuous random variables, possesses properties \ref{Renyi:sym}, \ref{Renyi:01} and \ref{Renyi:0}. 
\begin{enumerate}
\renewcommand{\theenumi}{\alph{enumi}}
  \item \label{Renyi:def} $\delta(X,Y)$  is defined for all non-constant random variables $X$, $Y$.
  \item \label{Renyi:sym} $\delta(X,Y) = \delta(Y,X)$.
  \item \label{Renyi:01} $\delta(X,Y) \in [0,1]$.
  \item \label{Renyi:0} $\delta(X,Y) = 0$ if and only if $X$ and $Y$ are independent.
  \item \label{Renyi:1} $\delta(X,Y) = 1$ if either $Y=f(X)$ or $X=g(Y)$ a.s.~for some Borel-measurable functions $f$, $g$.
  \item \label{Renyi:scale} If $\alpha$ and $\beta$ are Borel bijections on $\R$ then $\delta(\alpha(X),\beta(Y)) = \delta(X,Y)$.
  \item \label{Renyi:rho} If $X$ and $Y$ are jointly normal with correlation coefficient $\rho$, then $\delta(X, Y ) = |\rho|$.
\end{enumerate}
%His seventh and last postulate requires that, up to a minus sign, $\delta(X,Y)$ agrees with the correlation coefficient provided that their joint distribution is normal.
Thus far, the only measure of dependence that satisfies all of the above postulates is the maximal correlation coefficient introduced by Gebelein \cite{geb41}.  See for instance \cite{renyi1959,Siburg2009mmc}.

Recently, Siburg and Stoimenov \cite{Siburg2009mmc} introduced a measure of mutual complete dependence $\omega(X,Y)$ defined via its copula $C_{X,Y}$ by $\omega(X,Y)=\sqrt{3}\|C_{X,Y}-\Pi\|$.  While $\omega$ is defined only for continuous random variables, it satisfies the next three properties \ref{Renyi:sym}.--\ref{Renyi:0}.~enjoyed by most if not all measures of dependence.  However, instead of the conditions \ref{Renyi:1}.~and \ref{Renyi:scale}., $\omega$ satisfies the following conditions which makes it suitable for capturing mutual complete dependence regardless of how the random variables are related.
\begin{itemize}
  %\item[\text{e}'.] 
  \item[\ref{Renyi:1}.$'$] $\omega(X,Y) = 1$ if and only if there exist Borel measurable bijections $f$ and $g$ such that $Y=f(X)$ and $X=g(Y)$ almost surely.
  \item[\ref{Renyi:scale}.$'$] If $\alpha$ and $\beta$ are strictly monotonic transformations on images of $X$ and $Y$, respectively, then $\omega(\alpha(X),\beta(Y)) = \omega(X,Y)$.
\end{itemize}
Now, the property \ref{Renyi:scale}.$'$ means that $\omega$ is invariant under only strictly monotonic transformations of random variables.  
%First, let's recall the definition and properties of the measure of mutual complete dependence $\omega$ introduced by Siburg and Stoimenov \cite{Si9}. Let $X$ and $Y$ be continuous random variables with copula $C$. The measure $\omega$ is defined by $\omega(X,Y)=\sqrt{3}\|C-\Pi\|$, which can be viewed as the normalized Sobolev distance between the copula $C$ and the independence copula. 
\commentout{
The following theorem summarizes properties of the measure.
\begin{thm}[\cite{Si9}, Theorem 5.3] \label{omega}
 \draftnote{Erase this Thm}
Let $X$ and $Y$ be continuous random variables with copula $C$. Then the measure $\omega(X,Y)$ has the following properties:
\begin{enumerate}
\item $\omega(X,Y)= \omega(Y,X)$.
\item $0 \le \omega(X,Y) \le 1.$
\item $\omega(X,Y)= 0$ if and only if $X$ and $Y$ are independent.
\item $\omega(X,Y) = 1$ if $X$ and $Y$ are completely dependent.
\item $\omega(X,Y) \in (\sqrt{2}/2,1]$ if $Y$ is completely dependent on $X$ or $X$ is completely dependent on $Y$.
\item If $f$ and $g$ are monotone transformations, then $\omega(f(X),g(Y))= \omega(X,Y).$
\item If $\{(X_n,Y_n)\}_{n \in \mathbb{N}}$ is a sequence of pairs of continuous random variables with copulas $\{C_n\}_{n \in \mathbb{N}}$ and if $\displaystyle\lim\limits_{n \rightarrow \infty}\|C_n-C\|=0$, then we have $\displaystyle\lim\limits_{n \rightarrow \infty}\omega(X_n,Y_n)=\omega(X,Y).$
\end{enumerate}
\end{thm}
}
%Analogous to the measure $\omega$, we define a new measure of dependence using the $\ast$-norm as follows. 
%Let $X$ and $Y$ be continuous random variables with copula $C$. 
Using the $*$-norm which is invariant under all Borel measurable bijections, we define $$\omega_*(X,Y)=\sqrt{3}\|C_{X,Y}-\Pi\|_* =(3\|C_{X,Y}\|_*^2-2)^{1/2},$$ where the last equality follows from Proposition \ref{props}(3). %we have $\omega_*(X,Y)=(3\|C\|_*^2-2)^{1/2}.$
%This definition is analogous to that of $\omega$ introduced in \cite{Siburg2009mmc}.  Recall that $\omega(X,Y)=\sqrt{3}\|C-\Pi\|$.  
Since the $*$-norm shares many properties with the Sobolev norm (see Proposition \ref{prop:*norm}), the properties of $\omega_*$ are for the most part analogous to those of $\omega$'s.  Main exceptions are that \ref{Renyi:1}.$'$--\ref{Renyi:scale}.$'$ are replaced back by \ref{Renyi:1}.--\ref{Renyi:scale}. 
%\ref{*:1}--\ref{*:scale} below.
%except for properties 4 and 6 in Theorem \ref{omega}. 
%
%[Short motivation for $\omega_*$ followed by the unexpected result.]
%The measure $\omega_*$, unlike the measure $\omega$, is not a measure of mutual complete dependence since there exists a pair of continuous random variables which are not mutually completely dependent but their copula has $*$-norm one. This is the downfall of our measure compared to the measure $\omega$. Nevertheless, for the measure $\omega_*$, we can weaken the assumptions on the transformations $f$ and $g$ in property 6 of Theorem \ref{omega}. In order to do this, we use the fact that the shuffling maps are isometries with respect to the $\ast$-norm. The following theorem summarizes properties of the measure $\omega_*$.
\begin{thm}
Let $X$ and $Y$ be continuous random variables with copula $C$. Then $\omega_*(X,Y)$ has the following properties:
\begin{enumerate}
\item $\omega_*(X,Y)= \omega_*(Y,X)$.
\item $0 \le \omega_*(X,Y) \le 1.$
\item $\omega_*(X,Y)= 0$ if and only if $X$ and $Y$ are independent.
\item \label{*:1} $\omega_*(X,Y) = 1$ if  $Y$ is completely dependent on $X$ or $X$ is completely dependent on $Y$.
\item \label{*:scale} If $f$ and $g$ are Borel measurable bijective transformations, then we have $\omega_*(f(X),g(Y))= \omega_*(X,Y).$
\item If $\{(X_n,Y_n)\}_{n \in \mathbb{N}}$ is a sequence of pairs of continuous random variables with copulas $\{C_n\}_{n \in \mathbb{N}}$ and if $\displaystyle\lim\limits_{n \rightarrow \infty}\|C_n-C\|_*=0$, then $\displaystyle\lim\limits_{n \rightarrow \infty}\omega_*(X_n,Y_n)=\omega_*(X,Y).$
\end{enumerate}
\end{thm}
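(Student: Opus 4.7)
The plan is to verify each of the six properties in turn by invoking results already in place: Proposition \ref{prop:*norm} on the basic algebraic properties of $\|\cdot\|_*$, Theorem \ref{thm:cdnorm1} on complete dependence copulas, Corollary \ref{cor:star-shuffling} expressing the copula of transformed random variables as a $\ast$-product, and the isometry theorem immediately preceding Example \ref{norms-distinct}.

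Items 1--3 and item 6 are essentially immediate. For symmetry, since $C_{Y,X} = C_{X,Y}^T$ and $\Pi^T = \Pi$, Proposition \ref{prop:*norm}(4) gives $\omega_*(Y,X) = \omega_*(X,Y)$. For the range in item 2, one has $\|C\|_*^2 \ge \|C\|^2 \ge 2/3$ by Theorem \ref{thm:normC}, while $\|C\|_*^2 \le 1$ because each $U \ast C \ast V$ is itself a copula and so has Sobolev norm at most $1$; combined with the identity $\omega_*(X,Y)^2 = 3\|C_{X,Y}\|_*^2 - 2$, this places $\omega_*(X,Y)$ in $[0,1]$. Item 3 is a rewording of Proposition \ref{prop:*norm}(2): $\omega_*(X,Y)=0$ iff $\|C_{X,Y}\|_*^2 = 2/3$ iff $C_{X,Y} = \Pi$ iff $X$ and $Y$ are independent. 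Item 6 follows from the reverse triangle inequality for the norm $\|\cdot\|_*$:
\[ |\omega_*(X_n,Y_n) - \omega_*(X,Y)| \;\le\; \sqrt{3}\,\|C_n - C\|_* \;\longrightarrow\; 0. \]

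Item 4 is then a direct reformulation of Theorem \ref{thm:cdnorm1}: whenever $Y$ is completely dependent on $X$ or $X$ on $Y$, one has $\|C_{X,Y}\|_* = 1$ and therefore $\omega_*(X,Y) = \sqrt{3-2} = 1$.

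The main content is item 5, the invariance under Borel bijections. Given Borel measurable bijections $f$ and $g$, Corollary \ref{cor:star-shuffling} provides the factorization
\[ C_{f(X),g(Y)} \;=\; C_{f(X),X} \,\ast\, C_{X,Y} \,\ast\, C_{Y,g(Y)}. \]
Because $f$ is a Borel bijection, $X$ and $f(X)$ are mutually completely dependent, so by Theorem \ref{thm:normC} the copula $C_{f(X),X}$ has unit Sobolev norm and hence belongs to $\Inv\mathfrak{C}$; analogously $C_{Y,g(Y)} \in \Inv\mathfrak{C}$. Thus $C_{f(X),g(Y)}$ is a shuffling image of $C_{X,Y}$ in the sense of Section 5. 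Applying the isometry theorem preceding Example \ref{norms-distinct} gives $\|C_{f(X),g(Y)}\|_* = \|C_{X,Y}\|_*$, and then Proposition \ref{prop:*norm}(3) yields $\|C_{f(X),g(Y)} - \Pi\|_* = \|C_{X,Y} - \Pi\|_*$, i.e.~$\omega_*(f(X),g(Y)) = \omega_*(X,Y)$. The only real subtlety, and the step that warrants care, is confirming that the two outer factors in the decomposition truly belong to $\Inv\mathfrak{C}$; this is exactly where the Borel bijectivity hypotheses on $f$ and $g$ enter, via Theorem \ref{thm:normC}.
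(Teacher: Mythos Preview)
Your proof is correct and follows essentially the same route as the paper's own argument: items 1--3 and 6 via Proposition~\ref{prop:*norm} and the triangle inequality, item~4 via Theorem~\ref{thm:cdnorm1}, and item~5 via Corollary~\ref{cor:star-shuffling} together with the shuffling isometry theorem. The only cosmetic difference in item~5 is that the paper applies the isometry directly to $C_{X,Y}-\Pi$ (using that $\Pi$ is the null element of $\ast$), whereas you apply it to $C_{X,Y}$ and then invoke Proposition~\ref{prop:*norm}(\ref{prop:*norm3}); the two are interchangeable.
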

\begin{proof} %Let $X$ and $Y$ be continuous random variables.
1.~follows from the fact that $\|C_{X,Y}\|_*=\|C_{Y,X}\|_*$.  See Proposition \ref{props}. 
%the transposition map is an isometry with respect to the $*$-norm.
2.~is clear from the definition of $\|\cdot\|_*$ and the fact that $\|C_{X,Y}\|^2 \in [2/3,1]$. 
%Since $\|A\|_* \ge \|A\|$ for all $A \in \spanof\mathfrak{C}$, we have $\|C_{X,Y}\|_*^2 \in [2/3,1]$. Hence, $0 \le \omega_*(X,Y) \le 1.$
The statement 3.~is a result of Proposition \ref{props} which says that $\|C_{X,Y}\|_*^2=2/3$ if and only if $C_{X,Y}=\Pi$. %Therefore, $X$ and $Y$ are independent if and only if $\omega_*(X,Y)=0$.
4.~follows immediately from Theorem \ref{thm:cdnorm1}. %If $X$ and $Y$ are mutually completely dependent, then $\|C_{X,Y}\|_*=1$. Therefore, $\omega_*(X,Y)=1$.
%5. Let $Y$ be completely dependent on $X$ or $X$ be completely dependent on $Y$. Then $\|C_{X,Y}\|^2 \in (5/6,1]$ by Theorem \ref{sto}. Since $\|A\|_* \ge \|A\|$ for all $A \in \spanof\mathfrak{C}$, we have $\|C_{X,Y}\|_*^2 \in (5/6,1]$. Hence, $\omega_*(X,Y) \in (\sqrt{2}/2,1]$.
To prove 5., let $f,g$ be Borel measurable bijective transformations. Then, $X$ and $f(X)$ are mutually completely dependent, and so are $Y$ and $g(Y)$. Thus $\|C_{f(X),X}\|=1$ and $\|C_{Y,g(Y)}\|=1$. Therefore, the copulas $C_{f(X),X}$ and $C_{Y,g(Y)}$ are invertible. 
%by property 5 in Proposition \ref{So-props}. 
Hence 
%\begin{align*}
$\omega_*(f(X),g(Y)) %&=\sqrt{3}\|C_{f(X),g(Y)}-\Pi\|_*\\
%&
= \sqrt{3}\|C_{f(X),X}*(C_{X,Y}-\Pi)*C_{Y,g(Y)}\|_*
%\\&
= \sqrt{3}\|C_{X,Y}-\Pi \|_* = \omega_*(X,Y).
$  %\end{align*}
  Finally, 6.~can be proved via the inequality %From the definition of $\omega_*$, observe that
\[ |\omega_*(X_n,Y_n)-\omega_*(X,Y)| = \sqrt{3}\abs{\|C_n-\Pi\|_* - \|C-\Pi\|_*}
\le \sqrt{3}\|C_n-C\|_*.\]
%\begin{align*}
%|\omega_*(X_n,Y_n)-\omega_*(X,Y)|&=\sqrt{3}\abs{\|C_n-\Pi\|_* - \|C-\Pi\|_*}\\
%&\le \sqrt{3}\|C_n-C\|_*.
%\end{align*}
\end{proof}

Therefore, we have constructed a measure of dependence for continuous random variables which satisfies all of Renyi's postulates except possibly the last condition \ref{Renyi:rho}.  The $*$-norm of a convex sum of a unit $*$-norm copula and the independence copula is computed.
\begin{exam} 
By the computations in Example \ref{exam:EqualNorms}, if $\|A\|_* = 1$ and  $C_{X,Y} = \alpha A + (1-\alpha)\Pi$, then $\omega_*(X,Y) = [3(\alpha^2+2)/3-2]^{1/2} = \alpha.$
\end{exam}
%%%% From Ruankong-Sumetkijakan
Corollary \ref{cor:L*R} implies that there are many more copulas with unit $*$-norm, i.e.~any copulas of the form $C_{X,f(X)}*C_{g(Y),Y}$ where $f,g$ are Borel measurable transformations.  By the characterization of idempotent copulas in \cite{darsow2010}, all singular idempotent copulas are of this form and hence have unit $*$-norm.

\end{document}